\newlist{Tenumerate}{enumerate}{1}
\setlist[Tenumerate]{label=\rm($T_\arabic*$), ref=\rm($T_\arabic*$)}
\newlist{renumerate}{enumerate}{1}
\setlist[renumerate]{label=(\roman*), ref=(\roman*)}
\newcommand{\textoperatorname}[1]{%
  \operatorname{\textnormal{#1}}%
}	
\DeclareMathOperator{\Loj}{\textoperatorname{Ł}}
\setlist[description]{style=multiline,topsep=4pt,align=parright}
\let\reftagform@=\tagform@
\def\tagform@#1{\maketag@@@{(\ignorespaces\textcolor{black}{#1}\unskip\@@italiccorr)}}
\newcommand{\iref}[1]{\textup{\reftagform@{\tcr{\ref{#1}}}}}
\begin{document}
\title{Tikhonov Regularization for Stochastic Non-Smooth Convex Optimization in Hilbert Spaces}
\author{Rodrigo Maulen-Soto\thanks{Normandie Universit\'e, ENSICAEN, UNICAEN, CNRS, GREYC, France. E-mail: rodrigo.maulen@ensicaen.fr} \and
Jalal Fadili\thanks{Normandie Universit\'e, ENSICAEN, UNICAEN, CNRS, GREYC, France. E-mail: Jalal.Fadili@ensicaen.fr} \and Hedy Attouch\thanks{IMAG, CNRS, Universit\'e Montpellier, France. E-mail: hedy.attouch@umontpellier.fr}
}
\date{}
\maketitle

\begin{abstract} 
 To solve convex optimization problems with a noisy gradient input, we analyze the global behavior of
subgradient-like flows under stochastic errors. The objective function is composite, being equal to the sum of two
convex functions, one being differentiable and the other potentially non-smooth. We then use stochastic differential
inclusions where the drift term is minus the subgradient of the objective function,
and the diffusion term is either bounded or square-integrable. In this context, under Lipschitz's continuity of the differentiable
term and a growth condition of the non-smooth term, our first main result shows almost sure weak convergence
of the trajectory process towards a minimizer of the objective function. Then, using Tikhonov regularization with a properly tuned vanishing parameter, we can obtain almost sure strong convergence
of the trajectory towards the minimum norm solution. We find an explicit tuning of this parameter when our objective
function satisfies a local error-bound inequality. We also provide a comprehensive complexity analysis by establishing
several new pointwise and ergodic convergence rates in expectation for the convex, strongly convex, and {\L}ojasiewicz case.
\end{abstract}

\begin{keywords}
Stochastic optimization, inertial gradient system,  Convex optimization, Non-smooth optimization, Stochastic Differential Equation, Stochastic Differential Inclusion, Tikhonov regularization, Error bound inequality, {\L}ojasiewicz inequality, KL inequality, Convergence rate, Asymptotic behavior.
\end{keywords}

\begin{AMS}
 37N40, 46N10, 49M99, 65B99, 65K05, 65K10, 90B50, 90C25, 60H10, 49J52.
\end{AMS}

\section{Introduction}\label{sec:intro}
\subsection{Problem statement} We aim to solve convex minimization problems by means of stochastic differential inclusions (SDI), showing the existence, uniqueness, and properties of the solution. Then, we work with Tikhonov regularization, specifically when the drift term is the sum of the (sub-)gradient of the objective function and of a Tikhonov regularization term with a vanishing coefficient. This makes it possible to take into account a noisy (imprecise) gradient input and obtain convergence a.s. to the minimal norm solution.

Let 
us consider the minimization problem
\begin{equation}\label{P}\tag{P}
    \min_{x\in \H} F(x)\eqdef f(x)+g(x),
\end{equation}
where $\H$ is a separable real Hilbert space, and the objective $F$ satisfies the following standing assumptions:
\begin{align}\label{H0}
\begin{cases}
\text{$f:\H\rightarrow\R$ is continuously differentiable and convex with $L$-Lipschitz continuous gradient}; \\
\text{$g:\H\rightarrow\R$ is proper, lsc and convex};\\
\calS_F\eqdef \argmin (F)\neq\emptyset. \tag{$\mathrm{H}_0$} 
\end{cases}
\end{align} 

To solve \eqref{P}, a fundamental dynamic to consider is the subgradient flow, which is the following differential inclusion (DI) starting in $t_0\geq 0$ with initial condition $x_0\in\H$:
\begin{equation}
\begin{cases}\label{DI}\tag{DI}
\begin{aligned}
\dot{x}(t)&\in-\partial F(x(t)),\quad t>t_0;\\
x(t_0)&=x_0.
\end{aligned}
\end{cases}
\end{equation}

 It is well known since the founding articles of Br\'ezis, Baillon, Bruck in the 1970s that,   when the initial data $x_0$ is in the domain of $F$, (more generally when it is in its closure), there exists a unique strong global solution of \eqref{DI}.
Moreover, if the solution set $\argmin (F)$ of \eqref{P} is nonempty then each solution trajectory of \eqref{DI} converges weakly, and its limit belongs to $\argmin (F)$.

\smallskip

In many cases, the gradient input is subject to noise, for example, if the gradient cannot be evaluated directly, or due to some other exogenous factor. In such scenario, one can model the associated errors using a stochastic integral with respect to the measure defined by a continuous It\^o martingale. This entails the following stochastic differential inclusion (SDI) as a stochastic counterpart of \eqref{DI}
\begin{equation}\label{SDI0}\tag{$\mathrm{SDI}$}
\begin{cases}
\begin{aligned}
dX(t)&\in -\partial F(X(t)) +\sigma(t,X(t))dW(t), \quad t\geq t_0;\\
X(t_0)&=X_0,
\end{aligned}
\end{cases}
\end{equation}
where the diffusion (volatility) term $\sigma:[t_0,+\infty[\times\H\rightarrow \calL_2(\K;\H)$ (see notation in Section \ref{sec:notation}) is a measurable function, $\K$ a separable real Hilbert space, and $W$ is a $\K$-valued cylindrical Brownian motion (see Section~\ref{onstochastic} for a precise definition), and the initial data $X_0$ is a properly measurable $\H$-valued random variable. This dynamic can be viewed as a stochastic dissipative system that aims to minimize $F$ if the diffusion term vanishes sufficiently fast. Also, it is the natural extension to the non-smooth setting of the work done in \cite{mio}. \smallskip



An important aspect of our work concerns the Tikhonov regularization of \eqref{DI} and \eqref{SDI0}. 
Given $t_0>0$, and a regularization parameter $\varepsilon : [t_0, +\infty[ \to \R_+$, which is a measurable function that vanishes asymptotically in a controlled way,  the Tikhonov regularization of \eqref{DI} is written:
\begin{equation}
\begin{cases}\label{DITA}\tag{DI-TA}
\begin{aligned}
\dot{x}(t)&\in-\partial F(x(t))-\varepsilon(t) x(t),\quad t>t_0;\\
x(t_0)&=x_0.
\end{aligned}
\end{cases}
\end{equation}
The stochastic counterpart of \eqref{DITA} (which is the Tikhonov regularization of \eqref{SDI0}), is the following stochastic differential inclusion with initial data $X_0\in\Lp^{\nu}(\Omega;\H)$ (for some $\nu\geq 2$):
\begin{equation}\label{CSGD}\tag{$\mathrm{SDI-TA}$}
\begin{cases}
\begin{aligned}
dX(t)&\in-\partial F(X(t)) -\varepsilon(t) X(t) +\sigma(t,X(t))dW(t), \quad t> t_0;\\
X(t_0)&=X_0.
\end{aligned}
\end{cases}
\end{equation}

It is well-known that in the deterministic case of \eqref{DITA}, the Tikhonov regularization ensures that the trajectory generated by the system converges strongly to a particular minimizer of $F$: the one of minimum norm; see \cite{AC96,CPS} and references therein. The fact that the Tikhonov regularization parameter $\varepsilon (t)$ tends to zero not too fast as $t \to +\infty$ induces a hierarchical minimization property: the limit of any trajectory no longer depends on the initial data, it is precisely the minimum norm solution. 

It is our aim in this paper to extend these results to the stochastic case \eqref{CSGD} based on the recent work of Maulen-Soto, Fadili, and Attouch \cite{mio}. More precisely, our objective is to study the dynamics \eqref{SDI0} and \eqref{CSGD} and their long-time behavior in order to solve \eqref{P}. If the diffusion term vanishes with time, one would expect to solve \eqref{P} with our dynamics and obtain for \eqref{CSGD} the hierarchical minimization property described above. 

Motivated by this, our paper will primarily focus on the case where $\sigma(\cdot,x)$ vanishes sufficiently fast as $t \to +\infty$ uniformly in $x$. Additionally, we will provide some guarantees for uniformly bounded $\sigma$. Therefore, throughout the paper, we assume that $\sigma$ satisfies:
\begin{equation*}
\tag{$\mathrm{H}$}\label{H}
\begin{cases}
\sup_{t \geq t_0,x \in \H} \Vert \sigma(t,x)\Vert_{\HS}<+\infty, \\
\Vert \sigma(t,x')-\sigma(t,x)\Vert_{\HS} \leq L_0\norm{x'-x},
\end{cases}
\end{equation*}   
for some $L_0>0$ and for all  $t\geq t_0, x, x'\in \H$ (where the $\HS$-norm is defined in Section~\ref{sec:notation}). The Lipschitz continuity assumption is mild and required to ensure the well-posedness of \eqref{SDI0} and \eqref{CSGD}. 

\subsection{Contributions}


This work goes well beyond that of \cite{mio} in three directions: we consider the non-smooth case, in infinite dimensional Hilbert spaces, and with Tikhonov regularization. The latter makes it possible to pass from weak convergence to strong convergence, and to a particular solution, that of minimal norm.

\smallskip

We first study the properties of the process $X(t)$ and $F(X(t))$ for the stochastic differential inclusion \eqref{SDI0} on separable real Hilbert spaces from an optimization perspective, under the assumptions \eqref{H0}, \eqref{H} and \eqref{Hl} (introduced in Section \ref{sec:sdi}). When the diffusion term is uniformly bounded, we show convergence of $\EE[F(X(t))-\min F]$ to a noise-dominated region both for the convex and strongly convex case. When the diffusion term is square-integrable, we show in Theorem \ref{converge} that $X(t)$ weakly converges almost surely to a solution of \eqref{P}, which is a new result to the best of our knowledge. Moreover, in Theorem \ref{importante0}, we provide new ergodic and pointwise convergence rates of the objective in expectation, again, for both the convex and strongly convex case. \smallskip

Next, we consider \eqref{CSGD}, obtained by adding a Tikhonov regularization term to \eqref{SDI0}. We show in Theorem~\ref{converge20} that under certain conditions on the regularization term, $X(t)$ strongly converges almost surely to the minimum norm solution. Then, we show in Theorem \ref{practical} some practical situations where one can obtain an explicit form of the Tikhonov regularizer. Moreover, in Theorem \ref{importante1}, we show new convergence rates of the objective and the trajectory in expectation for the smooth case. \smallskip

Table~\ref{table:summary} summarizes the convergence rates obtained for $\EE[F(X(t))-\min F]$. We use the following notation, $F=f+g$, $\sigma_*>0$ and $\sigma_{\infty}(\cdot)$ is defined as
\begin{equation}\label{eq:defsigstar}
\sigma_{\infty}(t)\eqdef \sup_{x\in\H}\norm{\sigma(t,x)}_{\HS}, \qwhereq \norm{\sigma(t,x)}_{\HS}^2\leq \sigma_*^2, \quad \forall t\geq t_0,\forall x\in\H . 
\end{equation}
%

\begin{table}[H]
\begin{center}
\begin{tabular}{|c|c|c|cc|}
\hline
\textbf{Property of $F$} & \textbf{DI} & \textbf{SDI $(\sup_{t\geq t_0}\sigma_{\infty}(t)\leq\sigma_*)$} & \multicolumn{2}{c|}{\textbf{SDI  $(\sigma_{\infty}\in \Lp^2([t_0,+\infty[))$}}    \\ \hline
Convex & $t^{-1}$ & $t^{-1}+\sigma_*^2$ & \multicolumn{2}{c|}{$t^{-1}$}    \\ \hline
$\mu$-Strongly Convex & $e^{-2\mu t}$ & $e^{-\mu t}+\sigma_*^2$ & \multicolumn{2}{c|}{$\max\{e^{-\mu t},\sigma_{\infty}^2(t)\}$}  \\ \hline
\end{tabular}
\caption{Summary of convergence rates obtained for $\EE[F(X(t))-\min F]$.}
\label{table:summary}
\end{center}
\end{table}
We also denote $\EB^p(\calS)$ the local Error Bound Inequality defined in \eqref{eq:errbnd}. In Table~\ref{table:summary2}, we summarize the results obtained in the smooth case for the dynamics with Tikhonov regularization, \ie, when $g\equiv 0$. 
\begin{table}[H]
    \begin{center}
    \begin{tabular}{|c|c|c|}
    \hline
    \textbf{Property of $f$}
         & \textbf{DI-TA ($\varepsilon(t)=t^{-r}, r\in ]0,      1[$)} & \textbf{SDI-TA $\left(\varepsilon(t)=t^{-r}, r\in ]\frac{2p}{2p+1},1[\right)$} \\ \hline
        $\text{Convex} \cap \EB^p(\calS)$ & $t^{-r}$ & $t^{-r}$ whenever $\sigma_{\infty}^2(t)=\mathcal{O}(t^{-2r})$. \\ \hline
    \end{tabular}
    \caption{Summary of convergence rates obtained for $\EE[f(X(t))-\min f]$ for the dynamics with Tikhonov regularization when $\varepsilon(t)=t^{-r}$}
    \label{table:summary2}
    \end{center}
\end{table}

\subsection{Relation to prior work}

The subgradient flow dynamic \eqref{DI}, which is valid on a general real Hilbert space, is a dissipative dynamical system, whose study dates back to Cauchy \cite{Cauchy}. It plays a fundamental role in optimization: it transforms the problem of minimizing $F$ into the study of the asymptotic behavior of the trajectories of \eqref{DI}. Its Euler forward discretization (with stepsize $\gamma_k>0$) is the subgradient method 
\begin{equation}\label{SubG}\tag{Sub-G}
    x_{k+1}\in x_k-\gamma_k \partial  F(x_k).
\end{equation} 
Or equivalently, 
\begin{equation}
    x_{k+1}= x_k-\gamma_k g_k,
\end{equation} 
where $g_k\in\partial  F(x_k)$ for every $k\in \N$. 

Let us focus on the finite-dimensional case ($\H=\R^d$). 
In \cite{shor, akgul} they give conditions on the function and the stepsize to converge to within some range of the optimal value and to the optimal value. Despite \eqref{SubG} being a classical algorithm to solve the non-smooth convex minimization problem, it is not recommended for general use, as discussed in \cite{sub1,sub2}. Moreover, with the need to handle large-scale problems (such as in various areas of data science and machine learning), it has become necessary to find ways to get around the high computational cost per iteration that these problems entail. The Robbins-Monro stochastic approximation algorithm \cite{rob} is at the heart of Stochastic Gradient Descent methods, which, roughly speaking, consists in cheaply and randomly approximating the gradient at the price of obtaining a random noise in the solutions. In \cite{gwinner} they propose the natural generalization to the non-smooth setting, the stochastic subgradient method \eqref{ssug1} that updates the iterates according to \begin{equation}\tag{S-Sub-G}\label{ssug1}
    x_{k+1}\in x_k-\gamma_k (\partial  F(x_k)+\xi_k),
\end{equation} 
where $\xi_k$ denotes the (random) noise term on the subgradient at the $k$-th iteration, and $\EE[\xi_k]=0$.\smallskip

The SDI continuous-time approach is motivated by its relations to \eqref{ssug1}, where the latter can be viewed as an Euler forward time discretization, and the noise $\xi_k\sim\mathcal{N}(0,\sigma_k I_d)$ (hence not necessarily bounded). The advantage of the continuous-time perspective is that it offers a deep insight and unveils the key properties of the dynamic, without being tied to a specific discretization.\smallskip


We extend the work of \cite{mio} to the case where the objective is ``smooth+non-smooth'', being able to show the almost sure weak convergence of the trajectory to the set of minimizers and new convergence rates for the objective in the convex and strongly convex case.

Besides, based on the work of \cite{ACR}, we add a Tikhonov term that let us obtain the almost sure strong convergence of the trajectory to the minimal norm solution. Moreover, we extend the convergence rates shown in \cite[Theorem~5]{ACR} to the stochastic case. In our way, we even prove new and useful results for the deterministic setting (e.g., Proposition~\ref{ratetikhonov} and Corollary~\ref{cor:model-a}).

While the use of Lyapunov analysis and vanishing Tikhonov regularization are known techniques in the deterministic case~\cite{ACR}, their adaptation to the stochastic setting requires significant technical work and novel arguments. One has not only to handle carefully stochasticity through proper It\^o's calculus, but also non-smoothness of the objective function.

\subsection{Organization of the paper}

Section~\ref{sec:notation} introduces notations and reviews some necessary material from convex and stochastic analysis. Section~\ref{sec:sdi} states our main convergence results of \eqref{SDI0} in the case of a convex objective function under \eqref{H0} and with an extra assumption on the non-smooth term. We first show the almost sure weak convergence of the process towards the set of minimizers when the diffusion term is square-integrable, then we establish convergence rates for the values. Section~\ref{sec:tikhonov} introduces an extra vanishing term called Tikhonov regularizer that let us obtain the almost sure strong convergence of \eqref{CSGD} to the minimal norm solution. Then we give some practical situations where we can obtain an explicit tuning of the Tikhonov regularizer. Finally in this section, we present convergence rates for the values and for the trajectory in the smooth case. Technical lemmas and theorems that are needed throughout the paper will be collected in the appendix \ref{aux}.


\section{Notation and Preliminaries}\label{sec:notation}

We will use the following shorthand notations:  Given $n\in\N$,  $[n]\eqdef \{1,\ldots,n\}$. {Consider $\H,\K$ real separable Hilbert spaces endowed with the inner product $\langle\cdot,\cdot\rangle_{\H}$ and $\langle\cdot,\cdot\rangle_{\K}$, respectively, and norm $\Vert \cdot\Vert_{\H}=\sqrt{\langle \cdot,\cdot \rangle_{\H}}$ and $\Vert \cdot\Vert_{\K}=\sqrt{\langle \cdot,\cdot \rangle_{\K}}$, respectively (we omit the subscripts $\H$ and $\K$ for the sake of clarity). $I_{\H}$ is the identity operator from $\H$ to $\H$. $\calL(\K;\H)$ is the space of bounded linear operators from $\K$ to $\H$, $\calL_1(\K)$ is the space of trace-class operators, and $\calL_2(\K;\H)$ is the space of bounded linear Hilbert-Schmidt operators from $\K$ to $\H$}. For $M\in\calL_1(\K)$, is trace is defined by
\[
\tr(M)\eqdef \sum_{i\in I} \langle Me_i,e_i\rangle<+\infty,
\]
where $I\subseteq \N$ and $(e_i)_{i\in I}$ is an orthonormal basis of $\K$. Besides, for $M\in\calL(\K;\H)$, $M^{\star}\in\calL(\H;\K)$ is the adjoint operator of $M$, and for $M\in\calL_2(\K;\H)$, 
\[
\norm{M}_{\mathrm{HS}}\eqdef \sqrt{\tr(MM^{\star})}<+\infty
\] 
is its Hilbert-Schmidt norm (in the finite-dimensional case is equivalent to the Frobenius norm). We denote by $\wlim$ (resp. $\slim$) the limit for the weak (resp. strong) topology of $\H$. The notation $A: \H\rightrightarrows \H$ means that $A$ is a set-valued operator from $\H$ to $\H$. Consider $f:\H\rightarrow\R$, the sublevel of $f$ at height $r\in\R$ is denoted $[f\leq r]\eqdef \{x\in{\H}: f(x)\leq r\}$. For $1 \leq p \leq +\infty$, $\Lp^p([a,b])$ is the space of measurable functions $g:\R\rightarrow\R$ such that $\int_a^b|g(t)|^p dt<+\infty$, with the usual adaptation when $p = +\infty$. 
On the probability space $(\Omega,\calF,\PP)$, $\Lp^p(\Omega;\H)$ denotes the (Bochner) space of $\H$-valued random variables whose $p$-th moment (with respect to the measure $\PP$) is finite. Other notations will be explained when they first appear.

Let us recall some important definitions and results from convex analysis; for a comprehensive coverage, we refer the reader to \cite{rocka}.

\smallskip

We denote by $\Gamma_0(\H)$ the class of proper lsc and convex functions on $\H$ taking values in $\Rinf$.
For $\mu > 0$, $\Gamma_{\mu}(\H) \subset \Gamma_0(\H)$ is the class of $\mu$-strongly convex functions, roughly speaking, this means that there exists a quadratic lower bound on the growth of these functions. We denote by $C^{s}(\H)$ the class of $s$-times continuously differentiable functions on $\H$. 
For $L \geq 0$, $C_L^{1,1}(\H) \subset C^{1}(\H)$ is the set of functions on $\H$ whose gradient is $L$-Lipschitz continuous, and $C_L^2(\H)$ is the subset of $C_L^{1,1}(\H)$ whose functions are twice differentiable.

The \textit{subdifferential} of a function $f\in\Gamma_0(\H)$ is the set-valued operator $\partial f:\H \rightrightarrows \H$ such that, for every $x$ in $\H$,
\[
\partial f(x)=\{u\in\H:f(y)\geq f(x) + \dotp{u}{y-x} \qforallq y\in\H\}.
\]
When $f$ is continuous, $\partial f(x)$ is a non-empty convex and compact set for every $x\in \H$. If $f$ is differentiable, then $\partial f(x)=\{\nabla f(x)\}$. For every $x\in \H$ such that $\partial f(x) \neq \emptyset$, the minimum norm selection of $\partial f(x)$ is the unique element $\{\partial^0 f(x)\} \eqdef \argmin_{u\in \partial f(x)}\norm{u}$. 

\smallskip

The projection of a point $x\in \H$ onto a closed convex set $C\subseteq \H$ is denoted by $\proj_C(x)$.

\subsection{Deterministic results on the subgradient flow with Tikhonov regularization}\label{problem_statement}

Let us first recall some basic facts about the deterministic case. To solve \eqref{P}, a fundamental dynamic to consider is the subgradient flow of $F$, \ie the following differential inclusion:
\begin{equation}\label{GF}\tag{DI}
    \dot{x}(t)\in - \partial F(x(t)).
\end{equation}

It is well known since the founding papers of Br\'ezis, Baillon, and Bruck in the 1970s that, if the solution set $\argmin (F)$ of \eqref{P} is non-empty and $F$ is convex, lower semicontinuous (lsc) and proper, then each solution trajectory of \eqref{GF} converges weakly, and its weak limit belongs to $\argmin (F)$.

In general, the limit solution depends on the initial data and is a priori difficult to specify when one has a set of solutions not reduced to only one element. To remedy this difficulty we consider 
the differential inclusion with vanishing Tikhonov regularization, $\varepsilon (t) \to 0$ (denoted (DI-TA))   which gives
\begin{equation}\label{CSGDT1}\tag{$\mathrm{DI-TA}$}
\dot{x}(t) + \partial F(x(t)) + \varepsilon (t) x(t) \ni 0.
\end{equation}

To analyze the convergence properties of this dynamic, let us recall basic facts concerning the Tikhonov approximation (1963). It consists in approximating the convex minimization problem (possibly ill-posed)
\begin{center}
$
(\mathcal P) \quad  \min \left\lbrace  F(x) : \ x \in \mathcal H \right\rbrace,
$
\end{center}
\vspace{-1mm}
by the strongly convex minimization problem ($\varepsilon >0$)
\begin{center}
$
(\mathcal P)_{\varepsilon} \quad  \min \left\lbrace  F (x) + \dfrac{\varepsilon}{2}\|x\|^{2} : \ x \in \mathcal H \right\rbrace
$
\end{center}
\vspace{-1mm}
whose unique solution is denoted by $x_{\varepsilon}$.
The following result was first obtained by Browder in 1966 \cite{Browder1,Browder2}.

\begin{theorem}(Hierarchical minimization). \label{thm:hierarmin}
Suppose  that $\calS_F=\argmin (F) \neq \emptyset $. Let $x^{\star}=\proj_{\calS_F} (0)$. Then,
\begin{renumerate}
\item $\|x_{\varepsilon } \| \leq \|x^{\star}\|  \mbox{  for all } \varepsilon  >0$.
\item $\lim_{\varepsilon \rightarrow 0}\|x_{\varepsilon }-x^{\star}\|=0$.
\end{renumerate}
\end{theorem}

\medskip

The system \eqref{CSGDT1} is a special case of the general dynamic model
\begin{equation}\label{HBF-multiscale}
\dot{x}(t) + \nabla F (x(t)) + \varepsilon (t) \nabla \Psi (x(t))\ni0
\end{equation}
which involves two  functions $F$ and $\Psi$ intervening with different time scale. When $\varepsilon (\cdot)$ tends to zero moderately slowly, it was shown in \cite{czarnecki} that the trajectories of (\ref{HBF-multiscale})  converge asymptotically to equilibria that are solutions of the following hierarchical problem: they minimize the function $\Psi$ on the set of minimizers of $F$.
The continuous and discrete-time versions of these systems have a natural connection to the best response dynamics for potential games, domain decomposition for PDEs, optimal transport, and coupled wave equations. In the case of the Tikhonov approximation, a natural choice is to take $\Psi (x)=  \|x-x_d\|^{2}$  where $x_d$ is a desired state (which is also the continuous model of the Halpern method \cite{Suh23}). By doing so, we obtain asymptotically the closest possible solution to $x_ d$. By translation, we can immediately reduce ourselves to the case $x_d=0$, as considered in our work.

The following theorem establishes the convergence of the trajectories of \eqref{CSGDT1} towards the minimum norm solution under minimal assumptions on the parameter $\varepsilon (t)$. We assume that \eqref{CSGDT1} admits a unique strong global solution $x: [0,+\infty[ \to \H$, namely, $x$ is absolutely continuous on each compact interval such that \eqref{CSGDT1} holds for almost every $t \geq 0$. Sufficient conditions for this well-posedness may be found in \cite{brezis}.

\begin{theorem}\label{cps}
\textit{Suppose that $\varepsilon : [t_0, +\infty[ \to \R_+$ is a measurable function that satisfies}:
\begin{renumerate}
    \item \label{tikzero}$\varepsilon (t) \to 0$ \textit{as} $t \to +\infty$;
    \item \label{tikinf}$\displaystyle{\int_{t_0}^{+\infty} \varepsilon (t) dt = +\infty}$.
\end{renumerate}
\textit{Let $x(\cdot)$ be a solution trajectory of the continuous dynamic \eqref{CSGDT1}. Then, $\slim_{t \to +\infty} x(t) = x^{\star} \eqdef \proj_{\calS_F} (0)$}.
\end{theorem}

\tcb{
This result was established in \cite[Theorem~2]{CPS}. For the reader's convenience, we give a self-contained short proof in Appendix~\ref{sec:proofcps}.
}

\subsection{Stochastic differential equations}\label{sec:sdes}

As said before, in many cases, the drift term is subject to noise. In such a scenario, one can model these errors using a stochastic integral with respect to the measure defined by a continuous It\^o martingale. In the smooth case without Tikhonov regularization, this approach has been well documented in  Maulen-Soto,  Fadili, Attouch \cite{mio}. 
This concerns the following stochastic differential equation as the stochastic counterpart of the gradient flow, let $t_0\geq 0$ and initial data $X_0\in\Lp^{\nu}(\Omega;\H)$ (for some $\nu\geq 2$):
\begin{equation}\label{CSGD2}\tag{$\mathrm{SDE}$}
\begin{cases}
\begin{aligned}
dX(t)&=-\nabla f(X(t))dt+\sigma(t,X(t))dW(t), \quad t> t_0\\
X(t_0)&=X_0.
\end{aligned}
\end{cases}
\end{equation}
Let us make precise the ingredients of this stochastic differential equation.
It is defined over a filtered probability space $(\Omega,\mathcal F,\{\mathcal F_t\}_{t\geq 0},\mathcal P)$, where the diffusion (volatility) term $\sigma:[t_0,+\infty[\times\H\rightarrow \calL_2(\K;\H)$ is a measurable function, and $W$ is a $\K$-valued cylindrical Brownian motion. \\

Throughout this article, the diffusion term $\sigma$ is assumed to satisfy \eqref{H}. In connection with this assumption, let us define $\sigma_*>0$ and $\sigma_{\infty}(\cdot)$ by
\begin{equation}
\norm{\sigma(t,x)}_{\mathrm{HS}}^2\leq \sigma_*^2, \quad \forall t\geq 0,\forall x\in\H, \quad 
\sigma_{\infty}(t)\eqdef \sup_{x\in\H}\norm{\sigma(t,x)}_{\mathrm{HS}} ,
\end{equation}
and $\sigma_{\infty}(\cdot)$ is a decreasing function.

Concerning the study of \eqref{SDI0} and \eqref{CSGD}, let us recall the following result of \cite[Theorem~3.1]{mio} on which we will build our study. It establishes almost sure weak convergence of $X(t)$ to an $\calS$-valued random variable as $t \to +\infty$.

\begin{theorem}\label{converge2}
Consider the dynamic \eqref{CSGD2} where $f$ and $\sigma$ satisfy the assumptions \eqref{H0} and \eqref{H}. Let $\nu\geq 2$, and its initial data $X_0\in\Lp^{\nu}(\Omega;\H)$. Then, there exists a unique solution $X\in S_{\H}^{\nu}[t_0]$ of \eqref{CSGD2}.  Additionally, if $\sigma_{\infty}\in \mathbb \Lp^2([t_0,+\infty[)$, then: 
\begin{renumerate}
\item \label{acota-basic} $\sup_{t\geq 0}\EE[\norm{ X(t)}^2]<+\infty$.
\item  $\forall x^{\star}\in \calS$, $\lim_{t\rightarrow +\infty} \norm{ X(t)-x^{\star}}$ exists a.s. and $\sup_{t\geq 0}\norm{ X(t)}<+ \infty$ a.s.
\item \label{iiconv-basic} 
$\lim_{t\rightarrow \infty}\norm{\nabla f(X(t))}=0$ a.s.  
As a result, $\lim_{t\rightarrow \infty} f(X(t))=\min f$ a.s.
\item  There exists an $\calS$-valued random variable $X^{\star}$ such that $\wlim_{t\rightarrow +\infty} X(t) = X^{\star}$ a.s.
\end{renumerate}
\end{theorem}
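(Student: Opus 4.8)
The plan is to derive all four conclusions from a single Itô energy estimate anchored at an arbitrary $x^\star \in \calS$, combined with a continuous-time supermartingale (Robbins--Siegmund) argument. For well-posedness I would first check that, under \eqref{H0} and \eqref{H}, the coefficients of \eqref{CSGD2} meet the hypotheses of the existence--uniqueness theorem recalled in Appendix~\ref{aux}: the drift $-\nabla f$ is globally $L$-Lipschitz and $\sigma$ is Lipschitz and uniformly bounded, so both have linear growth, and a Picard fixed-point argument in $S_{\H}^{\nu}[t_0]$ produces the unique strong solution. For (i), I apply Itô's formula to $h(t)\eqdef \tfrac12\norm{X(t)-x^\star}^2$, take expectations so the martingale part drops, use convexity in the form $\dotp{X(t)-x^\star}{\nabla f(X(t))}\geq f(X(t))-\min f\geq 0$ to discard the nonpositive drift, and bound the Itô correction by $\tfrac12\sigma_\infty^2(t)$; integrating and invoking $\sigma_\infty\in\Lp^2$ yields $\sup_t\EE[\norm{X(t)}^2]<+\infty$.

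For the Fejér-type statement (ii), I keep the same decomposition
\[ dh(t) \leq -(f(X(t))-\min f)\,dt + \tfrac12\norm{\sigma(t,X(t))}_{\HS}^2\,dt + dM(t), \]
where the moment bound from (i) makes the stochastic integral $M$ a genuine martingale. Setting $V(t)\eqdef h(t) + \tfrac12\int_t^{+\infty}\sigma_\infty^2(s)\,ds$ and using $\norm{\sigma(t,X(t))}_{\HS}^2\leq\sigma_\infty^2(t)$ turns the right-hand side into $-(f(X(t))-\min f)\,dt + dM(t)$ plus a nonpositive term, so $V$ is a nonnegative supermartingale. Supermartingale convergence then gives simultaneously that $V(t)$, hence $\norm{X(t)-x^\star}$, converges a.s., that the trajectory is a.s.\ bounded, and that $\int_{t_0}^{+\infty}(f(X(s))-\min f)\,ds<+\infty$ a.s.

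For (iii) I repeat the analysis one level up, on $t\mapsto f(X(t))-\min f$. The delicate point is that \eqref{H0} only gives $f\in C^{1,1}_L$, so Itô's formula is not directly available; I would bypass this by a Moreau--Yosida smoothing or by the descent lemma, which bounds the Itô correction by $\tfrac{L}{2}\norm{\sigma(t,X(t))}_{\HS}^2\leq\tfrac{L}{2}\sigma_\infty^2(t)$, giving
\[ d\big(f(X(t))-\min f\big) \leq -\norm{\nabla f(X(t))}^2\,dt + \tfrac{L}{2}\sigma_\infty^2(t)\,dt + dN(t). \]
Supermartingale convergence then shows $f(X(t))-\min f$ converges a.s.\ and $\int_{t_0}^{+\infty}\norm{\nabla f(X(s))}^2\,ds<+\infty$ a.s.; combined with $\int_{t_0}^{+\infty}(f(X(s))-\min f)\,ds<+\infty$ from (ii), this forces $f(X(t))\to\min f$ a.s. Upgrading square-integrability of $\norm{\nabla f(X)}$ to the pointwise vanishing $\norm{\nabla f(X(t))}\to 0$ a.s.\ requires controlling the oscillation of the semimartingale $t\mapsto\norm{\nabla f(X(t))}^2$, which I would estimate from the $L$-Lipschitz continuity of $\nabla f$ together with the already-controlled drift and quadratic variation of $X$ on the a.s.\ bounded trajectory.

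Finally, the weak convergence (iv) comes from a stochastic Opial lemma: on a full-measure event, $\norm{X(t)-x^\star}$ converges for every $x^\star\in\calS$, while every weak sequential cluster point of $X(t)$ belongs to $\calS$ (by demiclosedness of $\nabla f$ with $\nabla f(X(t))\to 0$, or by weak lower semicontinuity of $f$ with $f(X(t))\to\min f$), and these two facts pin down a single $\calS$-valued, $\calF$-measurable weak limit $X^\star$. I expect the main obstacle to lie here, for two reasons. First, the convergence of $\norm{X(t)-x^\star}$ a priori holds only off an $x^\star$-dependent null set, so making it valid for all $x^\star\in\calS$ on one event requires passing through a countable dense subset of $\calS$ and transferring the property via the $1$-Lipschitz dependence of $x^\star\mapsto\norm{X(t)-x^\star}$ and separability of $\H$. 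Second, establishing the pointwise gradient vanishing of (iii) under mere $C^{1,1}$ regularity (rather than $C^2$) is the other genuinely technical step.
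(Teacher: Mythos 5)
Your overall architecture is the same as the one used in the paper (for this smooth statement the paper defers to \cite[Theorem 3.4]{mio}, but it reproduces the argument in full for the generalization to $F=f+g$ in Theorem \ref{converge}): anchor $\phi(x)=\tfrac12\Vert x-x^{\star}\Vert^2$, It\^o's formula, a semimartingale/supermartingale convergence theorem (your Robbins--Siegmund step is exactly the role played by Theorem \ref{impp}), a countable dense subset of $\calS$ to get one full-measure event, and Opial's lemma. The one place where you genuinely diverge is item (iii). You propose a second It\^o expansion, applied to $f(X(t))-\min f$, and you correctly flag that this is problematic under mere $C^{1,1}_L$ regularity and would require a smoothing detour. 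The paper avoids this entirely: it never applies It\^o to $f$. Instead it extracts $\int_{t_0}^{+\infty}\Vert\nabla f(X(t))-\nabla f(x^{\star})\Vert^2\,dt<+\infty$ a.s.\ directly from the \emph{first} It\^o estimate, by lower-bounding the drift term $\langle\nabla f(X(t))-\nabla f(x^{\star}),X(t)-x^{\star}\rangle\ge \tfrac1L\Vert\nabla f(X(t))-\nabla f(x^{\star})\Vert^2$ via cocoercivity of $\nabla f$ (Baillon--Haddad), so the integrability of the gradient comes for free from the same bound that gives Fej\'er monotonicity. This is cleaner and sidesteps the $C^2$ issue you identify as an obstacle. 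For the upgrade from square-integrability to pointwise vanishing, your plan (control the oscillation of $\nabla f(X(\cdot))$ via Lipschitzness of $\nabla f$ and the regularity of $X$) is the right idea; the paper's implementation is a Barbalat-type contradiction argument in which the key ingredient controlling the oscillation of $X$ on small intervals is the \emph{a.s.\ convergence} of the martingale $N_t=\int_{t_0}^t\sigma(s,X(s))\,dW(s)$ (an $L^2$-bounded Hilbert-valued martingale, Theorem \ref{convmartingale}), combined with an a.s.\ bound on the drift along the bounded trajectory; quadratic-variation estimates alone would not give the required uniform-in-time smallness of the increments. Finally, once $\Vert\nabla f(X(t))\Vert\to 0$ a.s., the value convergence follows in one line from convexity and boundedness, $f(X(t))-\min f\le\Vert\nabla f(X(t))\Vert\,\Vert X(t)-x^{\star}\Vert$, which is lighter than your route through a second supermartingale plus the integral criterion. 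None of these differences is a gap; your proposal is correct as a plan, but the cocoercivity shortcut is worth adopting since it removes the only step you yourself identified as technically uncertain.
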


\begin{remark}
To be precise, \cite[Theorem~3.1]{mio} treats the finite-dimensional case, however in \cite[Chapter 3]{mitesis} the general separable real Hilbertian case was considered.
\end{remark}

\section{Stochastic differential inclusions}\label{sec:sdi}

In this section, we will work with stochastic differential inclusions. For the history of this concept, we refer the reader to \cite[Preface]{sdia}. We will start by showing a general version of the \eqref{SDI0} dynamic, formally describing what it means to be a solution of that dynamic, and then we will move on to show the conditions under which \tcb{we} can have the existence and uniqueness of a solution. Existence is due to \cite{petterson} and uniqueness is proven here. Then we will focus on \eqref{SDI0} and study the conditions on the diffusion term in order to ensure the almost sure weak convergence of the trajectory towards the set of minimizers. Finally, we will show some convergence rates of the objective under convexity or strong convexity.


\subsection{Existence and uniqueness of solution}
For a set-valued operator $A: \H \rightrightarrows \H$, its domain is $\dom(A)= \{x \in \H:~ A(x) \neq \emptyset\}$. For $t_0 \geq 0$, let $b:[t_0,+\infty[\times \H\rightarrow\H$ and $\sigma:[t_0,+\infty[\times\H\rightarrow\calL_2(\K;\H)$, and consider the general stochastic differential inclusion:
\begin{equation}\label{SDI}\tag{$\mathrm{SDI_0}$}
\begin{cases}
\begin{aligned}
dX(t)&\in b(t,X(t))dt-A(X(t))dt+\sigma(t,X(t))dW(t), \quad t> 0\\
X(t_0)&=X_0,
\end{aligned}
\end{cases}
\end{equation}
defined over a complete filtered probability space $(\Omega,\calF,\{\calF_t\}_{t\geq t_0},\PP)$, where the diffusion (volatility) term $\sigma:[t_0,+\infty[\times\H\rightarrow \calL_2(\K;\H)$ is a measurable function; $W$ is a $\calF_t$-adapted $\K$-valued cylindrical Brownian motion; and the initial data $X_0$ is an $\calF_0$-measurable $\H$-valued random variable.

\begin{definition}\label{def:SDIsolution}
A solution of \eqref{SDI} is  a couple $(X,\eta)$ of $\calF_t$-adapted processes such that almost surely:
\begin{renumerate}
    \item $X$ is continuous with sample paths in the domain of $A$;
    \item $\eta$ is absolutely continuous, such that $\eta(t_0)=0$, and $\forall T>t_0$, $\eta'\in\Lp^2([t_0,T];\H)$,    $\eta'(t)\in A(X(t))$ for almost all $t\geq t_0$;
    \item For $t> t_0$,
   \begin{equation}\label{itosdis}
 \begin{aligned}
 \begin{cases}
     X(t)&=X_0+\int_{t_0}^t b(s,X(s))ds-\eta(t)+\int_{t_0}^t \sigma(s,X(s))dW(s),\\
      X(t_0)&=X_0.
      \end{cases}
 \end{aligned}
     \end{equation}
\end{renumerate}
\end{definition}

For the sake of brevity, we sometimes omit the process $\eta$ and say that $X$ is a solution of \eqref{SDI}, meaning that,  there exists a process $\eta$ such that $(X,\eta)$ satisfies the previous definition. 

\smallskip

The definition of uniqueness for the process $X$ will be presented in Section \ref{onstochastic}.\\

\noindent  Throughout the paper it will be assumed that:
\begin{equation*}
\tag{$\mathrm{H_0}(A)$}\label{H0a}
\begin{cases}
\text{$A$ is a maximal monotone operator with closed domain}; \\
\calS \eqdef A^{-1}(0)\neq\emptyset. 
\end{cases}
\end{equation*}  

\begin{equation*}
\tag{$\mathrm{H_0}(b,\sigma)$}\label{H0bs}
\begin{cases}
\exists L>0, 
\Vert b(t,x)-b(t,y)\Vert \vee \Vert\sigma(t,x)-\sigma(t,y)\Vert_{\mathrm{HS}}\leq L\Vert x-y\Vert, \forall t\geq t_0,\forall x,y\in\H;\\
\sup_{t\geq t_0}(\Vert b(t,0)\Vert \vee \Vert \sigma(t,0)\Vert_{\mathrm{HS}})<+\infty. 
\end{cases}
\end{equation*} 

The Lipschitz continuity assumption is mild and required to ensure the well-posedness of \eqref{SDI}.

\smallskip

We are interested in ensuring the existence and uniqueness of a solution for \eqref{SDI}. Although there are several works that deal with the subject of stochastic differential inclusions (see \cite{sdia,bocsan, prato, rascanu, petterson, govindan}), those of \cite{petterson,govindan} are the closest to our setting and define a solution in the sense of Definition~\ref{def:SDIsolution}, thus generalizing the work of Br\'ezis \cite{brezis} in the deterministic case to the stochastic setting. In this paper, we consider the sequence of solutions $\{X_{\lambda}\}_{\lambda>0}$ of the stochastic differential equations 
\begin{equation}\label{SDE}\tag{$\mathrm{SDE_{\lambda}}$}
\begin{cases}
\begin{aligned}
dX_{\lambda}(t)&= b(t,X(t))dt-A_{\lambda}(X_{\lambda}(t))dt+\sigma(t,X_{\lambda}(t))dW(t), \quad t> t_0\\
X_{\lambda}(t_0)&=X_0,
\end{aligned}
\end{cases}
\end{equation}
where $A_{\lambda}=(I-(I+\lambda A)^{-1})/\lambda$ is the Yosida approximation of $A$ with parameter $\lambda>0$. Under \eqref{H0a} and \eqref{H0bs}, as well as the integrability condition 
\begin{equation}\label{Hl}\tag{$\mathrm{H}_{\lambda}$}
\limsup_{\lambda\downarrow 0}\int_{t_0}^T \EE(\Vert A_{\lambda}(X_{\lambda}(t))\Vert^2)dt<+\infty, 
\end{equation} 
it was shown in \cite[Theorem~3.5]{petterson} that there exists a couple $(X,\eta)$ of stochastic processes such that for every $T>t_0$, 
\[
\lim_{\lambda\downarrow 0}\EE\left(\sup_{t\in [t_0,T]}\Vert X_{\lambda}(t)-X(t)\Vert^2\right)=0,\quad \lim_{\lambda\downarrow 0}\EE\left(\sup_{t\in [t_0,T]}\Vert \eta_{\lambda}-\eta\Vert^2\right)=0,
\]
where $\eta_{\lambda}(t)=\int_{t_0}^t A_{\lambda}(X_{\lambda}(s))ds,$ and that $(X,\eta)$ is a solution of \eqref{SDI} in the sense of Definition~\ref{def:SDIsolution}. Moreover, one can even have a.s. strong convergence of the process $X_{\lambda}$ to $X$ when the diffusion term is state-independent; see \cite[Proposition~6.3]{petterson}.
\begin{remark}\label{remarkhl}
In view of \cite[Proposition~3.1]{petterson} and Fatou's lemma, for \eqref{Hl} to hold, it is sufficient that the Yosida approximation $A_{\lambda}$ obeys a linear growth condition of the form $\limsup_{\lambda\downarrow 0}\norm{A_{\lambda}(x)} \leq C(1+\norm{x})$ for all $x \in \H$. For instance, in light of \cite[Proposition~23.43(i)]{BauschkeBookPrevious}, the last linear growth condition holds if $\dom(A)=\H$ and $\norm{A^0 (x)}\leq C(1+\norm{x})$ for all $x \in \H$, where $A^0(x)$ is the minimal norm element of $A(x)$. A typical case of interest in the setting of non-smooth optimization, which is at the heart of this paper, is when $A$ is the subdifferential of a globally Lipschitz continuous convex function. This includes a wealth of functions used in machine learning, statistics and data processing. Note that to control the linear growth of $A_{\lambda}(x)$ through that of $A^0 (x)$, \cite[Corollary~23.46(ii)]{BauschkeBookPrevious} tells us that the domain condition $\dom(A)=\H$ cannot be removed since $\norm{A_{\lambda}(x)}\uparrow+\infty$ as $\lambda\downarrow 0$ for $x \in \H \setminus \dom(A)$. We insist, however, on the fact that these conditions are only sufficient but not necessary and \eqref{Hl} can be verified beyond this case; see for instance the product structure of $A$ and $\sigma$ studied in \cite{Kree82} and specialized to normal cones in \cite[Remark~3.3(ii)]{petterson}.
\end{remark}
\medskip
\tcb{Although the existence of a solution to \eqref{SDI} was established in \cite{petterson}, uniqueness was not addressed. In the following theorem, we provide conditions under which uniqueness also holds for such SDI. The proof is given in Subsection~\ref{exiuniappendix}.}
\begin{theorem}\label{exiuni}
Consider \eqref{SDI}, where $A$ and $(b,\sigma)$ satisfy the assumption \eqref{H0a} and \eqref{H0bs}, respectively. Additionally, suppose that $A$ satisfy \eqref{Hl} and let $\nu\geq 2$ such that $X_0\in\Lp^{\nu}(\Omega;\H)$ and is $\calF_0$-measurable. Then, there exists a unique solution $(X,\eta)\in S_{\H}^{\nu}[t_0]\times C^1([t_0,+\infty[;\H)$ of \eqref{SDI}. 
\end{theorem}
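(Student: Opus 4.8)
The plan is to split the statement into \emph{existence}, which leverages the cited construction, and \emph{uniqueness}, which is the genuinely new part and rests on a monotonicity-plus-Gr\"onwall argument.

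For existence, I would start from the Yosida-regularized equations \eqref{SDE}. Since $A_{\lambda} = (I-(I+\lambda A)^{-1})/\lambda$ is globally Lipschitz (with constant $1/\lambda$) and $b,\sigma$ are Lipschitz by \eqref{H0bs}, each \eqref{SDE} has a unique strong solution $X_{\lambda}\in S_{\H}^{\nu}[t_0]$ by the classical theory of SDEs with Lipschitz coefficients in Hilbert space. Invoking the integrability hypothesis \eqref{Hl} together with \cite{petterson} yields a pair $(X,\eta)$ that is the $\Lp^2$-limit (uniformly on compact time intervals) of $(X_{\lambda},\eta_{\lambda})$ and solves \eqref{SDI} in the sense of Definition~\ref{def:SDIsolution}; in particular $\eta$ is absolutely continuous with $\eta'(t)\in A(X(t))$ a.e., and the claimed $C^1$ regularity of $\eta$ is inherited from this construction. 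To land the solution in $S_{\H}^{\nu}[t_0]$ for the prescribed $\nu\geq 2$ (Pettersson's convergence being only in $\Lp^2$), I would derive a moment bound uniform in $\lambda$: applying It\^o's formula to $\norm{X_{\lambda}(t)}^{\nu}$ (or to $(1+\norm{X_{\lambda}}^2)^{\nu/2}$), using the linear growth of $b,\sigma$ implied by \eqref{H0bs}, the monotonicity of $A_{\lambda}$, and the Burkholder--Davis--Gundy inequality to control the stochastic integral, followed by Gr\"onwall, gives $\sup_{\lambda}\EE[\sup_{t\in[t_0,T]}\norm{X_{\lambda}(t)}^{\nu}]\leq C(T)(1+\EE[\norm{X_0}^{\nu}])$. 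Lower semicontinuity of the norm under the $\Lp^2$-limit then transfers this bound to $X$, so $X\in S_{\H}^{\nu}[t_0]$.

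For uniqueness, suppose $(X_1,\eta_1)$ and $(X_2,\eta_2)$ are two solutions of \eqref{SDI} with the same initial datum, and set $Y=X_1-X_2$, so that $Y(t_0)=0$ and
\[
dY(t) = \big(b(t,X_1(t))-b(t,X_2(t))\big)dt - \big(\eta_1'(t)-\eta_2'(t)\big)dt + \big(\sigma(t,X_1(t))-\sigma(t,X_2(t))\big)dW(t).
\]
Applying It\^o's formula to $\norm{Y(t)}^2$ in $\H$ produces four contributions: the $b$-drift term, the $\eta$-drift term, the It\^o correction $\norm{\sigma(t,X_1)-\sigma(t,X_2)}_{\HS}^2$, and a stochastic integral $M(t)$. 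The crux is that maximal monotonicity of $A$ forces $\dotp{Y(s)}{\eta_1'(s)-\eta_2'(s)}\geq 0$ a.e. since $\eta_i'(s)\in A(X_i(s))$, so the $\eta$-term enters with the favorable sign and can simply be discarded. The Lipschitz bounds of \eqref{H0bs} control the remaining two integrands by $(2L+L^2)\norm{Y(s)}^2$. After localizing by the stopping times $\tau_n=\inf\{t\geq t_0 : \norm{Y(t)}\geq n\}$ to make $M$ a true martingale, taking expectations kills $M$ and leaves $\EE[\norm{Y(t\wedge\tau_n)}^2]\leq (2L+L^2)\int_{t_0}^{t}\EE[\norm{Y(s\wedge\tau_n)}^2]\,ds$. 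Gr\"onwall gives $\EE[\norm{Y(t\wedge\tau_n)}^2]=0$, and letting $n\to\infty$ (the $S_{\H}^{\nu}$ integrability guaranteeing $\tau_n\to\infty$) yields $X_1=X_2$ up to indistinguishability; the integral equation \eqref{itosdis} then forces $\eta_1=\eta_2$.

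The main obstacle is not the algebra but making the stochastic calculus rigorous: justifying the It\^o expansion of $\norm{Y}^2$ when part of the drift is the merely absolutely continuous process $\eta$ with $\eta'\in\Lp^2_{\mathrm{loc}}$, and controlling the local martingale $M$ by localization so that its expectation genuinely vanishes while the Gr\"onwall conclusion survives the passage $n\to\infty$. A secondary technical point is the uniform-in-$\lambda$ moment estimate needed to place the limit in $S_{\H}^{\nu}[t_0]$ rather than only in the $\Lp^2$ space where Pettersson's convergence takes place; this is precisely where \eqref{Hl} and the growth bounds of \eqref{H0bs} are essential.
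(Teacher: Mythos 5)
Your proposal is correct and follows essentially the same route as the paper: existence is taken from Pettersson's Yosida-approximation construction under \eqref{Hl}, and uniqueness is proved by applying It\^o's formula to $\norm{X_1(t)-X_2(t)}^2$, discarding the $\eta$-term by monotonicity of $A$, localizing with the stopping times $\tau_n$, and concluding via Gr\"onwall and Fatou. The only cosmetic difference is the $\nu$-th moment bound, which the paper obtains directly on the limit solution $X$ by applying It\^o to $\norm{X(t)-x^{\star}}^2$ with $x^{\star}\in A^{-1}(0)$, raising to the power $\nu/2$ and invoking Burkholder--Davis--Gundy, Young and Gr\"onwall, whereas you propose the uniform-in-$\lambda$ estimate on $X_{\lambda}$ followed by passage to the limit; that variant works equally well (it is essentially the paper's subsequent Corollary), provided you anchor the It\^o functional at a zero of $A$ rather than at the origin, since monotonicity of $A_{\lambda}$ alone does not give a sign to $\dotp{A_{\lambda}(x)}{x}$.
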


\begin{corollary}
Consider \eqref{SDE}, where $A$ and $(b,\sigma)$ satisfy the assumption \eqref{H0a} and \eqref{H0bs}, respectively. Additionally, let us consider that $A$ satisfy \eqref{Hl} and let $\nu\geq 2$ such that $X_0\in\Lp^{\nu}(\Omega;\H)$ and is $\calF_0$-measurable. Then, 
\[
\sup_{\lambda>0}\EE\left(\sup_{t\in [t_0,T]}\Vert X_{\lambda}(t)\Vert^{\nu}\right)<+\infty.
\]
\end{corollary}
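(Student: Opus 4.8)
The plan is to rerun the a priori moment estimate from the proof of Theorem~\ref{exiuni}, but directly on the Yosida-regularized processes $X_\lambda$, while tracking carefully that every constant produced is independent of $\lambda$. The starting observation is that for each fixed $\lambda>0$ the coefficients of \eqref{SDE} are globally Lipschitz: $b$ and $\sigma$ satisfy \eqref{H0bs}, while $A_\lambda$ is monotone and $\tfrac1\lambda$-Lipschitz. Hence standard strong existence/uniqueness theory for SDEs with Lipschitz coefficients provides a unique $X_\lambda\in S_{\H}^{\nu}[t_0]$; in particular $\EE\big(\sup_{t\in[t_0,T]}\Vert X_\lambda(t)\Vert^{\nu}\big)<+\infty$ for each $\lambda$, which is exactly what legitimizes the absorption step below (our task is then to show this quantity is bounded \emph{uniformly} in $\lambda$).

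The key structural fact is that the $\lambda$-dependent drift term disappears after pairing against $X_\lambda-x^\star$. Fix $x^\star\in\calS=A^{-1}(0)$. Since $0\in A(x^\star)$ we have $(I+\lambda A)^{-1}(x^\star)=x^\star$, hence $A_\lambda(x^\star)=0$; combined with the monotonicity of $A_\lambda$ this gives
\[
\langle A_\lambda(X_\lambda(s)),X_\lambda(s)-x^\star\rangle=\langle A_\lambda(X_\lambda(s))-A_\lambda(x^\star),X_\lambda(s)-x^\star\rangle\geq 0
\]
for almost every $s$. Applying It\^o's formula to $\Vert X_\lambda(t)-x^\star\Vert^2$ and discarding this non-positive contribution yields precisely the analogue of \eqref{sdv} with $X$ replaced by $X_\lambda$, in which only $b$ and $\sigma$ appear and no trace of $A_\lambda$ remains.

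From here the argument is verbatim that of Theorem~\ref{exiuni}: the growth bound $2\langle b(t,x),x-x^\star\rangle+\Vert\sigma(t,x)\Vert_{\mathrm{HS}}^2\leq C(1+\Vert x-x^\star\Vert^2)$ uses only \eqref{H0bs}; raising to the power $\nu/2$, taking $\sup_{t\in[t_0,T]}$ and then expectations, invoking Proposition~\ref{burkholder1} for the martingale part, and absorbing the resulting $\sup$-term via Young's inequality leads, after Gr\"onwall's inequality, to
\[
\EE\Big(\sup_{t\in[t_0,T]}\Vert X_\lambda(t)-x^\star\Vert^{\nu}\Big)\leq K\big(1+\EE(\Vert X_0-x^\star\Vert^{\nu})\big)e^{KT}.
\]
The crucial point, and the only place where one must be vigilant, is that the constant $K=K(\nu,T)$ depends solely on $\nu$, $T$, $L$ and $\sup_{t\geq t_0}(\Vert b(t,0)\Vert\vee\Vert\sigma(t,0)\Vert_{\mathrm{HS}})$ through $C$; none of these quantities involves $\lambda$, precisely because the only $\lambda$-dependent term was eliminated by monotonicity. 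Taking the supremum over $\lambda>0$ and using $\Vert X_\lambda(t)\Vert^{\nu}\leq 2^{\nu-1}(\Vert X_\lambda(t)-x^\star\Vert^{\nu}+\Vert x^\star\Vert^{\nu})$ then gives the claim.

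The main obstacle is therefore not analytic novelty but bookkeeping: one must verify that the finite-moment membership $X_\lambda\in S_{\H}^{\nu}[t_0]$ (needed to justify the Young absorption for each fixed $\lambda$) is in place \emph{and} that the final Gr\"onwall constant is $\lambda$-free. Both are secured by the cancellation of the Yosida term, which is the whole reason the regularized family remains uniformly bounded as $\lambda\downarrow 0$ and is consistent with the convergence $X_\lambda\to X$ underlying Theorem~\ref{exiuni}.
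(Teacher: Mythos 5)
Your proposal is correct and follows essentially the same route as the paper: the paper's proof also replaces $\eta'$ by $A_{\lambda}(X_{\lambda})$ in the argument of Theorem~\ref{exiuni}, uses $A^{-1}(0)=A_{\lambda}^{-1}(0)$ together with the monotonicity of $A_{\lambda}$ to discard the Yosida term, and observes that the resulting Gr\"onwall constant is independent of $\lambda$. Your write-up simply makes the bookkeeping explicit.
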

\begin{proof}
    Since $A^{-1}(0)=A_{\lambda}^{-1}(0)$ and $A_{\lambda}$ is monotone, we replace $\eta'$ by $A_{\lambda}(X_\lambda)$ in the proof of Theorem \ref{exiuni}, then we realize that the constant that bounds $\EE\left(\sup_{t\in [t_0,T]}\Vert X_{\lambda}(t)\Vert^{\nu}\right)$ is independent from $\lambda$ to conclude. 
\end{proof}


Let us present our extension of It\^o's formula for a multi-valued drift, which plays a central role in the study of SDIs.
\begin{proposition}\label{itos}
Consider \eqref{SDI} under the assumptions of Theorem \eqref{exiuni}. Let $(X,\eta)\in S_{\H}^{\nu}[t_0]\times C^1([t_0,+\infty[;\H)$ be the unique solution of \eqref{SDI}, and let $\phi: [t_0,+\infty[\times\H\rightarrow\R$ be such that $\phi(\cdot,x)\in C^1([t_0,+\infty[)$  for every $x\in\H$ and $\phi(t,\cdot)\in C^2(\H)$ for every $t\geq t_0$. Then the process $Y(t)=\phi(t,X(t)),$ is an It\^o Process such that for all $t\geq 0$
\begin{multline}
Y(t)=Y(t_0)+\int_{t_0}^{t} \frac{\partial \phi}{\partial t}(s,X(s)) ds
+\int_{t_0}^{t} \dotp{\nabla \phi(s,X(s))}{b(s,X(s))-\eta'(s)} ds\\
+\int_{t_0}^{t}\dotp{\sigma^{\star}(s,X(s))\nabla \phi(s,X(s))}{dW(s)}+\frac{1}{2}\int_{t_0}^{t} \tr[\sigma(s,X(s))\sigma^{\star}(s,X(s))\nabla^2\phi(s,X(s))]ds,
\end{multline}
where $\eta'(t)\in A(X(t))$ a.s. for almost all $t\geq t_0$. Moreover, if $\EE[Y(t_0)]<+\infty$, and if for all $T>t_0$ 
\[
\EE\pa{\int_{t_0}^T \norm{\sigma^{\star}(s,X(s))\nabla \phi(s,X(s))}^2 ds}<+\infty,
\]
then $\displaystyle\int_{t_0}^{t}\dotp{\sigma^{\star}(s,X(s))\nabla \phi(s,X(s))}{dW(s)}$ is a square-integrable continuous martingale and
\begin{multline}
\EE[Y(t)]=\EE[Y(t_0)]+\EE\pa{\int_{t_0}^{t} \frac{\partial \phi}{\partial t}(s,X(s)) ds}+\EE\pa{\int_{t_0}^{t} \dotp{\nabla \phi(s,X(s))}{b(s,X(s))-\eta'(s)} ds}\\
+\frac{1}{2}\EE\pa{\int_{t_0}^{t} \tr[\sigma(s,X(s))\sigma^{\star}(s,X(s))\nabla^2\phi(s,X(s))]ds}.
\end{multline}

\end{proposition}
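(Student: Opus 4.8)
The plan is to recognize that, although the drift of \eqref{SDI} involves the multivalued operator $A$, the solution process $X$ is in fact an ordinary continuous $\H$-valued It\^o semimartingale, so that the classical infinite-dimensional It\^o formula applies verbatim. The key point is that in Definition~\ref{def:SDIsolution} the process $\eta$ is \emph{absolutely continuous} and $\calF_t$-adapted with $\eta'\in\Lp^2([t_0,T];\H)$ a.s.; consequently the only role of the inclusion $\eta'(t)\in A(X(t))$ is to single out a particular adapted, locally square-integrable drift, and the set-valuedness of $A$ plays no part in the computation itself. Writing $\tilde b(s)\eqdef b(s,X(s))-\eta'(s)$, assumption \eqref{H0bs} together with $X\in S_\H^\nu[t_0]$ and $\eta'\in\Lp^2$ guarantee that $\tilde b$ is $\calF_s$-adapted with $\tilde b\in\Lp^2([t_0,T];\H)$ a.s., so that, by \eqref{itosdis}, the process $X(t)=X_0+\int_{t_0}^t\tilde b(s)\,ds+\int_{t_0}^t\sigma(s,X(s))\,dW(s)$ has the canonical semimartingale decomposition with absolutely continuous finite-variation part and square-integrable martingale part.

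First I would record the integrability of the coefficients needed to run It\^o's formula: using the Lipschitz and linear-growth bounds of \eqref{H0bs} and $X\in S_\H^\nu[t_0]$, the quantities $\nabla\phi(s,X(s))$, $\sigma^{\star}(s,X(s))\nabla\phi(s,X(s))$ and $\tr(\sigma\sigma^{\star}\nabla^2\phi)(s,X(s))$ are, for each fixed $T>t_0$, a.s. integrable on $[t_0,T]$ after a standard localization. Concretely I would introduce the stopping times $\tau_n\eqdef\inf\{t\ge t_0:\norm{X(t)}\ge n\}$, apply the classical It\^o formula for $C^{1,2}$ functions of Hilbert-space-valued It\^o processes (see, e.g., \cite{prato}) to $\phi(t\wedge\tau_n,X(t\wedge\tau_n))$ — legitimate since on $[t_0,\tau_n]$ the process $X$ is bounded and $\partial_t\phi,\nabla\phi,\nabla^2\phi$ are continuous, hence bounded there — and then let $n\to+\infty$ using $\tau_n\uparrow+\infty$ (a consequence of the a.s. continuity of $X$) and dominated convergence on each term. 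This yields the first displayed identity, in which the stochastic integral $\int_{t_0}^{\cdot}\dotp{\sigma^{\star}\nabla\phi}{dW}$ is a priori only a continuous local martingale.

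Next, for the expectation formula I would invoke the extra hypothesis $\EE\int_{t_0}^T\norm{\sigma^{\star}(s,X(s))\nabla\phi(s,X(s))}^2ds<+\infty$: it places the integrand in $\Lp^2(\Omega\times[t_0,T];\K)$, so by the It\^o isometry the stochastic integral is a genuine square-integrable continuous martingale of zero mean, and together with $\EE[Y(t_0)]<+\infty$ this also bounds the expectations of the remaining terms. Taking expectations on both sides of the first identity and using Fubini--Tonelli to exchange $\EE$ with the time integrals (justified by the established integrability) then removes the martingale term and produces the second display, with $G=\sigma$.

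The main obstacle I anticipate is technical rather than conceptual: justifying the passage to and from the localized It\^o formula under the relatively weak regularity assumed on $\phi$ (only $C^1$ in $t$ and $C^2$ in $x$, without a joint $C^{1,2}$ hypothesis), and ensuring that the local martingale upgrades to a true martingale exactly under the stated $\Lp^2$ condition. Both are handled by the localization--dominated-convergence scheme above, the only genuinely delicate point being the uniform (in $n$) integrability needed to pass $n\to+\infty$, which follows from $X\in S_\H^\nu[t_0]$ with $\nu\ge 2$ and the growth control on $\sigma$ and $\eta'$. An equivalent route, closer to the construction used in the paper, would be to apply the standard It\^o formula to the Yosida-regularized processes $X_\lambda$ solving \eqref{SDE} (whose coefficients are globally Lipschitz) and then pass to the limit $\lambda\downarrow 0$, using $X_\lambda\to X$ and $\eta_\lambda\to\eta$ in $S_\H^\nu[t_0]$ together with the weak $\Lp^2$ convergence $A_\lambda(X_\lambda)=\eta_\lambda'\rightharpoonup\eta'$ granted by \eqref{Hl}; the strong--weak pairing with the continuous $\nabla\phi(\cdot,X_\lambda(\cdot))$ then identifies the limiting drift term.
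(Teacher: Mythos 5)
Your proposal is correct and follows essentially the same route as the paper: the paper's proof likewise observes that, once the selection $\eta'$ is fixed, $X$ is an ordinary It\^o process with drift $b(s,X(s))-\eta'(s)$ and diffusion $\sigma(s,X(s))$, and then applies the classical infinite-dimensional It\^o formula, with the $\Lp^2$ hypothesis upgrading the stochastic integral to a zero-mean square-integrable martingale. You simply spell out the localization and integrability details that the paper delegates to the cited reference.
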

\begin{proof}
The unique solution $(X,\eta)\in S_{\H}^{\nu}[t_0]\times C^1([t_0,+\infty[;\H)$ of \eqref{SDI} satisfies (by definition) the following equation:
 \begin{equation}\label{itosdis2}
 \begin{aligned}
 \begin{cases}
     X(t)&=X_0+\int_{t_0}^t [b(s,X(s))-\eta'(s)]ds+\int_{t_0}^t \sigma(s,X(s))dW(s),\quad t>t_0,\\
      X(t_0)&=X_0.
      \end{cases}
 \end{aligned}
     \end{equation} 
and $\eta'(s)\in A(X(s))$ for almost all $t\geq 0$ a.s.. Then, \eqref{itosdis2} is an It\^o process with drift $s\mapsto b(s,X(s))-\eta'(s)$ and diffusion $s\mapsto \sigma(s,X(s))$. Consequently, we can apply the classical It\^o's formula (see \cite[Section~2.3]{infinite}) to obtain the desired.
\end{proof}

\subsection{Almost sure weak convergence of the trajectory}
We consider $f+g$ (called the potential) and study the dynamic \eqref{SDI0} under the hypotheses \eqref{H0} (\ie $f \in C_L^{1,1}(\H) \cap \Gamma_0(\H), g\in \Gamma_0(\H)$) and \eqref{H}. Recall the definitions of $\sigma_*$ and $\sigma_{\infty}(t)$ from \eqref{eq:defsigstar}. Throughout the rest of the paper, we use the notation:
\begin{eqnarray*}
F(x)&\eqdef f(x)+g(x),\\
\Sigma(t,x) &\eqdef  \sigma(t,x)\sigma(t,x)^{\star}, \\
 \mathcal S_{F} & \eqdef  \argmin (F) .
\end{eqnarray*}


\tcb{Our first main result establishes almost sure weak convergence of $X(t)$ to a random variable taking values in $\mathcal{S}_F$. The proof relies on Lyapunov-type arguments and the use of Barbalat's Lemma and Opial's Lemma. While these tools are classical in the deterministic setting, applying them in a stochastic framework requires particular care. In our case, this involves working almost surely and leveraging the properties of It\^o's formula and Robbins-Siegmund lemma (see Theorem~\ref{impp}). The overall strategy aligns with the approach used in \cite[Theorem~3.1]{mio} (restated in Theorem~\ref{converge2}), and extends that result to the non-smooth setting.}

\tcb{The main challenge in this extension lies in justifying It\^o's formula and handling the selection curve $\eta'(t) \in \partial g(X(t))$ a.s. The former is addressed through our specific notion of solution (see Definition~\ref{def:SDIsolution} and Proposition~\ref{itos}), while the latter is of a more technical nature and is treated in detail in the proof. We now state the result precisely.
}

\begin{theorem}\label{converge}
    Consider $F=f+g$ and $\sigma$ satisfying \eqref{H0} and \eqref{H} respectively. Suppose further that $\partial g$ verifies \eqref{Hl}. 
Let $\nu\geq 2$, $t_0\geq 0$ , and consider the dynamic \eqref{SDI0} with initial data $X_0\in \Lp^{\nu}(\Omega;\H)$, \ie:
\begin{equation}
    \begin{cases}
         dX(t)&\in -\partial F(X(t))dt+\sigma(t,X(t))dW(t),\\
        X(t_0)&=X_0,
    \end{cases}
\end{equation}
where $W$ is a $\K$-valued cylindrical Brownian motion. Then, there exists a unique solution (in the sense of Theorem \ref{exiuni}) $(X,\eta)\in S_{\H}^{\nu}[t_0]\times C^{1}([t_0,+\infty[;\H)$.

Moreover, if $\sigma_{\infty}\in\Lp^2([t_0,+\infty[)$, then the following holds:
    \begin{renumerate}
        \item \label{acota2fg} $\EE[\sup_{t\geq t_0}\norm{X(t)}^{\nu}]<+\infty$.
        \vspace{1mm}
        \item  $\forall x^{\star}\in \calS_F$, $\lim_{t\rightarrow +\infty} \norm{X(t)-x^{\star}}$ exists a.s. and $\sup_{t\geq t_0}\norm{ X(t)}<+ \infty$ a.s..
        \vspace{1mm}
        \item \label{iiconvfg} If $g$ is continuous, then $\forall x^{\star}\in \calS_F$, $\nabla f(x^{\star})$ is constant,  $\lim_{t\rightarrow \infty}\norm{\nabla f(X(t))-\nabla f(x^{\star})}=0$ a.s. for any $x^{\star}\in \calS_F$ ,and  \[\int_{t_0}^{+\infty} F(X(t))-\min F \hspace{0.1cm}dt<+\infty.\] 
        \vspace{1mm}
        \item If \ref{iiconvfg} holds, then there exists an $\calS_F$-valued random variable $X^{\star}$ such that $\wlim_{t\rightarrow +\infty} X(t)= X^{\star}$.
    \end{renumerate}    
\end{theorem}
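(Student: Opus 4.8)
The plan is to realize \eqref{SDI0} as an instance of the general inclusion \eqref{SDI} with drift $b(t,x)=-\nabla f(x)$ and multivalued part $A=\partial g$, so that well-posedness is immediate from Theorem~\ref{exiuni}: $\nabla f$ is $L$-Lipschitz by \eqref{H0}, $\partial g$ is maximal monotone and satisfies the prescribed \eqref{Hl}, and $\calS_F\neq\emptyset$ furnishes an equilibrium anchor $x^\star$ with $-\nabla f(x^\star)\in\partial g(x^\star)$. Writing $v^\star=\nabla f(x^\star)$ and $\xi^\star=-v^\star\in\partial g(x^\star)$, the whole analysis is driven by $h_{x^\star}(t)=\tfrac12\norm{X(t)-x^\star}^2$. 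Applying It\^o's formula (Proposition~\ref{itos}) with $\phi(x)=\tfrac12\norm{x-x^\star}^2$ and setting $v(t)=\nabla f(X(t))+\eta'(t)\in\partial F(X(t))$ gives
\[
dh_{x^\star}(t)=-\dotp{v(t)}{X(t)-x^\star}\,dt+\tfrac12\tr\Sigma(t,X(t))\,dt+\dotp{\sigma^\star(t,X(t))(X(t)-x^\star)}{dW(t)}.
\]
Monotonicity of $\partial g$ at $\xi^\star$ together with convexity of $F$ yields $\dotp{v(t)}{X(t)-x^\star}\ge F(X(t))-\min F\ge 0$, so the drift is dissipative, while $\tfrac12\tr\Sigma\le\tfrac12\sigma_\infty^2$ is integrable in time.

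For item~\ref{acota2fg} I would run the moment estimate as in the proof of Theorem~\ref{exiuni} (raise the energy identity to the power $\nu/2$, take a supremum over $[t_0,T]$, apply Burkholder--Davis--Gundy and Young's inequality), but now dissipativity of the drift removes the positive Gr\"onwall term, so the only inhomogeneity is controlled by $\int_{t_0}^{\infty}\sigma_\infty^2<\infty$; the bound is then uniform in $T$ and letting $T\to\infty$ gives $\EE[\sup_{t\ge t_0}\norm{X(t)}^\nu]<\infty$. For the second item, the energy identity is of Robbins--Siegmund type: $h_{x^\star}$ is nonnegative with a decreasing part $-\int\dotp{v}{X-x^\star}$, an increasing part $\tfrac12\int\tr\Sigma$ of finite total mass, and a square-integrable martingale (square-integrability following from item~\ref{acota2fg} and $\sigma_\infty\in\Lp^2$). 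The almost-sure supermartingale convergence theorem then gives, for each fixed $x^\star$, that $\lim_t\norm{X(t)-x^\star}$ exists a.s.\ and $\int_{t_0}^{\infty}\dotp{v(t)}{X(t)-x^\star}\,dt<\infty$ a.s.; a union bound over a countable dense subset of the separable set $\calS_F$ together with the triangle inequality upgrades this to a single full-probability event on which the limit exists simultaneously for all $x^\star\in\calS_F$, and a.s.\ boundedness follows by continuity on the initial compact interval.

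For item~\ref{iiconvfg} I would first note that $\nabla f$ is constant on $\calS_F$: for $x_1,x_2\in\calS_F$ monotonicity of $\partial g$ forces $\dotp{\nabla f(x_1)-\nabla f(x_2)}{x_1-x_2}\le 0$, which against monotonicity of $\nabla f$ gives equality, and Baillon--Haddad cocoercivity then yields $\nabla f(x_1)=\nabla f(x_2)=v^\star$. Cocoercivity also gives $\tfrac1L\norm{\nabla f(X)-v^\star}^2\le\dotp{\nabla f(X)-v^\star}{X-x^\star}\le\dotp{v}{X-x^\star}$, so the integrability from the second item yields both $\int_{t_0}^\infty\norm{\nabla f(X(t))-v^\star}^2\,dt<\infty$ and $\int_{t_0}^\infty (F(X(t))-\min F)\,dt<\infty$ a.s. To pass from square-integrability to the pointwise limit $\nabla f(X(t))\to v^\star$ I would invoke the stochastic Barbalat argument: on the a.s.\ bounded trajectory $\nabla f$ is Lipschitz and, crucially because $g$ is continuous, $\partial g$ is locally bounded, so $\eta'$ and hence the drift $v$ stay bounded; over short disjoint intervals the displacement $\norm{X(t)-X(t_k)}$ is then small (bounded drift plus increments of an a.s.\ convergent stochastic integral), contradicting $\limsup_t\norm{\nabla f(X(t))-v^\star}>0$.

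Finally, for the last item the same Barbalat mechanism applied to the nonnegative, a.s.\ time-integrable map $t\mapsto F(X(t))-\min F$ — Lipschitz-controlled along the bounded trajectory since the finite convex $F$ is locally Lipschitz — upgrades $\int(F(X)-\min F)<\infty$ to $F(X(t))\to\min F$ a.s. Weak lower semicontinuity of $F$ then places every weak sequential cluster point of $X(t)$ in $\calS_F$, and combined with the Fej\'er property from the second item, Opial's lemma delivers a single $\calS_F$-valued random variable $X^\star$ with $\wlim_{t\to+\infty}X(t)=X^\star$. I expect the main obstacle to be exactly this final transfer from an integral bound to a pointwise limit of the values under noise: the stochastic Barbalat estimate must simultaneously exploit boundedness of the subgradient drift (coming from the continuity of $g$) and the almost-sure convergence of the driving stochastic integral, and this is precisely where the non-smooth term and the diffusion interact.
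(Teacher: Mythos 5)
Your proposal follows essentially the same route as the paper: well-posedness by casting \eqref{SDI0} as \eqref{SDI} with $b=-\nabla f$ and $A=\partial g$; It\^o's formula on the anchor $\tfrac12\norm{x-x^\star}^2$ plus the Robbins--Siegmund-type Theorem~\ref{impp} and a countable-dense-subset argument for item (ii); cocoercivity of $\nabla f$ and the Barbalat-type contradiction (exploiting boundedness of $\eta'$ from continuity of $g$ and a.s.\ convergence of the stochastic integral) for item (iii); and Opial's lemma for item (iv). The only divergence is item (i), where the paper simply cites Theorem~\ref{exiuni} (which per se only yields a bound on each finite horizon $[t_0,T]$), whereas you rerun the moment estimate using dissipativity of the drift and $\sigma_\infty\in\Lp^2$ to make the bound uniform in $T$ --- a more careful treatment of the same estimate rather than a different method.
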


\begin{remark}
Let us specialize the discussion of Remark~\ref{remarkhl}. By classical properties of the Yosida approximation, we know that
\[
(\partial g(x))_{\lambda}=\nabla g_{\lambda}(x)=\frac{1}{\lambda}(x-\mathrm{prox}_{\lambda g}(x)),
\] 
where $g_{\lambda}$ is the Moreau envelope of $g$ with parameter $\lambda>0$. If there exists $C>0$, such that for all $x \in \H$
\[
\Vert x-\mathrm{prox}_{\lambda g}(x)\Vert\leq \lambda C(1+\norm{x}),
\] 
then assumption \eqref{Hl} is satisfied by $\partial g$. This is the case if $g$ is a continuous convex function, \ie $\dom(\partial g)=\H$, and $\Vert\partial^0 g(x)\Vert\leq C(1+\Vert x\Vert)$ for all $x \in \H$. As mentioned in Remark~\ref{remarkhl}, if $g$ is convex and $L_g$-Lipschitz continuous, then $\sup_{x \in \H} \Vert\partial^0 g(x) \Vert \leq L_g$, and thus \eqref{Hl} is in force. These examples cover a wealth of functions encountered in practice such as in machine learning and signal processing. 
\end{remark}

\begin{proof}
\begin{renumerate}
    \item Directly from Theorem \ref{exiuni}.
       \item Since $F$ is convex, we first notice that $\calS_F=(\partial F)^{-1}(0)$.
       
       \smallskip
       
    Now let us consider $(X,\eta)\in S_{\H}^{\nu}[t_0]\times C^{1}([t_0,+\infty[;\H)$ be the unique solution of \eqref{SDI} given by Theorem \ref{exiuni}, and $\phi(x)=   \frac{\norm{x-x^{\star}}^2}{2}$, where $x^{\star}\in \calS_F$. Then by It\^o's formula
    \begin{align}
        \phi(X(t))&=\underbrace{\frac{\norm{X_0-x^{\star}}^2}{2}}_{\xi=\phi(X_0)}+\underbrace{\frac{1}{2}\int_{t_0}^t \tr[\Sigma(s,X(s))ds]}_{A_t}-\underbrace{\int_{t_0}^t \dotp{\eta'(s)+\nabla f(X(s))}{X(s)-x^{\star}} ds}_{U_t} \nonumber \\
        &+\underbrace{\int_{t_0}^t \dotp{\sigma^{\star}(s,X(s))\pa{X(s)-x^{\star}}}{dW(s)}}_{M_t}. 
        \label{basic_Ito11fg}
    \end{align}

      Let us observe that, since $\nu\geq 2$, we have that $\EE(\sup_{t\geq t_0}\norm{X(t)}^2)<+\infty$. Moreover, since $\sigma_{\infty}\in\Lp^2([t_0,+\infty[)$ we have
\[
\EE\pa{\int_{t_0}^{+\infty}\norm{\sigma^{\star}(s,X(s))\pa{X(s)-x^{\star}}}^2ds} \leq \EE\pa{\sup_{t\geq t_0}\norm{ X(t)-x^{\star}}^2}\int_{t_0}^{+\infty} \sigma_{\infty}^2(s)ds<+\infty.
\]
Therefore $M_t$ is a square-integrable continuous martingale. It is also  a continuous local martingale (see \cite[Theorem 1.3.3]{mao}), which implies that $\EE(M_t)=0$.

\smallskip

Moreover, since $F$ is a convex function, then $\partial F$ is a monotone operator. On the other hand $\eta'(t)\in \partial g(X(t))$ a.s. for almost all $t\geq t_0$, so \[\dotp{\eta'(t)+\nabla f(X(t))}{X(t)-x^{\star}}\geq 0, a.s. \text{for almost all $t\geq t_0$}.\] 

We have that $A_t$ and $U_t$ defined as in \eqref{basic_Ito11fg} are two continuously adapted increasing processes with $A_0=U_0=0$ a.s..
Since $\phi(X(t))$ is nonnegative and $\sup_{x\in\H}\norm{ \sigma(\cdot,x)}_{\mathrm{HS}}\in \Lp^2([t_0,+\infty[)$, we deduce that  $\lim_{t\rightarrow +\infty}A_t< +\infty$. Then, we can use Theorem \ref{impp} to conclude that
\begin{equation}\label{ecconvfg}
\int_{t_0}^{+\infty} \langle \eta'(t)+\nabla f(X(t)),X(t)-x^{\star}\rangle dt< +\infty \quad a.s.
\end{equation}
and
\begin{equation}\label{xkconvfg}
\forall x^{\star}\in \calS_F, \exists \Omega_{x^{\star}}\in\calF, \text{such that } \Pro(\Omega_{x^{\star}})=1 \text{ and } \lim_{t\rightarrow +\infty}\norm{ X(\omega,t)-x^{\star}} \text{ exists } \forall \omega\in \Omega_{x^{\star}}.
\end{equation}
Since $\H$ is separable, there exists a countable set $\calZ \subseteq \calS_F$, such that $\cl(\calZ)=\calS_F$ (where $\cl$ stands for the closure of the set). Let $\widetilde{\Omega}=\bigcap_{z\in \calZ}\Omega_z$. Since $\calZ$ is countable, a union bound shows
\[
\PP(\widetilde{\Omega})=1-\PP\pa{\bigcup_{z\in \calZ}\Omega_z^c}\geq 1-\sum_{z\in \calZ}\PP(\Omega_z^c) = 1.
\]
For arbitrary $x^{\star}\in \calS_F$, there exists a sequence $(z_k)_{k\in\N}\subseteq \calZ$ such that $\lim_{k\rightarrow\infty} z_k= x^{\star}$.
In view of \eqref{xkconvfg}, for every $k\in\N$ there exists $\tau_k:\Omega_{z_k}\rightarrow\R_+$ such that
\begin{equation}\label{eq:taukomegafg}
\lim_{t\rightarrow +\infty}\norm{ X(\omega,t)-z_k}=\tau_k(\omega), \quad \forall\omega\in\Omega_{z_k}.
\end{equation}

Now, let $\omega \in \widetilde{\Omega}$. Since $\widetilde{\Omega} \subset \Omega_{z_k}$ for any $k \in \N$, and  using the triangle inequality and \eqref{eq:taukomegafg}, we obtain that 
\[
\tau_k(\omega) - \norm{z_k-x^{\star}} \leq \liminf_{t\rightarrow +\infty}\norm{X(\omega,t)-x^{\star}} \leq \limsup_{t\rightarrow +\infty}\norm{X(\omega,t)-x^{\star}} \leq \tau_k(\omega) + \norm{z_k-x^{\star}} .
\]
Now, passing to $k \to +\infty$, we deduce
\[
\limsup_{k\rightarrow +\infty}\tau_k(\omega) \leq \liminf_{t\rightarrow +\infty}\norm{X(\omega,t)-x^{\star}} \leq \limsup_{t\rightarrow +\infty}\norm{X(\omega,t)-x^{\star}} \leq \liminf_{k\rightarrow +\infty}\tau_k(\omega) ,
\]
whence we deduce that $\lim_{k\rightarrow +\infty}\tau_k(\omega)$ exists on the set $\widetilde{\Omega}$ of probability $1$, and in turn \[\lim_{t\rightarrow +\infty}\norm{X(\omega,t)-x^{\star}}=\lim_{k\rightarrow +\infty}\tau_k(\omega).\]

\smallskip
    
Let us recall that there exists $\Omega_{\rm{cont}}\in\calF$  such that $\Pro(\Omega_{\rm{cont}})=1$ and $X(\omega,\cdot)$ is continuous for every $\omega\in\Omega_{\rm{cont}}$. Now let $x^{\star}\in \calS_F$ arbitrary, since the limit exists, for every $\omega\in\widetilde{\Omega}\cap\Omega_{\rm{cont}}$ there exists $T(\omega)$ such that $\norm{ X(\omega,t)-x^{\star}}\leq 1+\lim_{k\rightarrow +\infty}\tau_k(\omega)$ for every $t\geq T(\omega)$. Besides, since $X(\omega,\cdot)$ is continuous, by Bolzano's theorem \[\sup_{t\in [0,T(\omega)]}\norm{ X(\omega,t)}=\max_{t\in [0,T(\omega)]}\norm{ X(\omega,t)} \eqdef h(\omega) <+\infty.\] Therefore,  $\sup_{t\geq t_0}\norm{ X(\omega,t)}\leq \max\{h(\omega),1+\lim_{k\rightarrow +\infty}\tau_k(\omega)+\norm{ x^{\star}}\}<+\infty$.
    
\smallskip
    
\item Let $N_t=\displaystyle\int_{t_0}^t \sigma(s,X(s))dW(s)$. This is a continuous martingale (w.r.t. the filtration $\calF_t$), which verifies
\[
\EE(\Vert N_t\Vert^2)=\EE\pa{\int_{t_0}^t \norm{\sigma(s,X(s))}_{\mathrm{HS}}^2ds}\leq \EE\pa{\int_{t_0}^{+\infty} \sigma_{\infty}^2(s)ds}<+\infty, \forall t\geq t_0.
\]
 According to Theorem \ref{convmartingale}, we deduce that there exists a $\H-$valued random variable $N_{\infty}$ w.r.t. $\calF_{\infty}$, and which verifies: $\EE(\Vert N_{\infty}\Vert^2)<+\infty$, and there exists $\Omega_N\in\calF$ such that $\Pro(\Omega_N)=1$ and 
\[
\lim_{t\rightarrow +\infty}N_t(\omega)= N_{\infty}(\omega) \mbox{ for every } \omega\in\Omega_N .
\]
On the other hand, since $x^{\star}\in (\partial F)^{-1}(0)=(\nabla f+\partial g)^{-1}(0)$, then $-\nabla f(x^{\star})\in \partial g(x^{\star})$. Let $T>t_0$ such that $\eta'(t)\in \partial g(X(t))$ a.s., consequently,

\begin{align*}
    \langle \eta'(t)+\nabla f(X(t)),X(t)-x^{\star}\rangle&=\underbrace{\langle \eta'(t)-(-\nabla f(x^{\star})),X(t)-x^{\star}\rangle}_{\geq 0}\\& +\langle \nabla f(X(t))-\nabla f(x^{\star}),X(t)-x^{\star}\rangle\\
    &\geq \langle \nabla f(X(t))-\nabla f(x^{\star}),X(t)-x^{\star}\rangle\\
    &\geq \frac{1}{L} \norm{\nabla f(X(t))-\nabla f(x^{\star})}^2,
\end{align*}
where $\langle \eta'(t)-(-\nabla f(x^{\star})),X(t)-x^{\star}\rangle\geq 0$ by monotonicity of $\partial g$. Then by \eqref{ecconvfg} we obtain \begin{equation}\label{integralbfg}
    \int_{t_0}^{+\infty}\norm{\nabla f(X(t))-\nabla f(x^{\star})}^2dt<+\infty \quad a.s.. 
\end{equation}

Let $\Omega_{\mathrm{HS}}\in\calF$ be the event where \eqref{ecconvfg} (and consequently \eqref{integralbfg}) is satisfied ( $\Pro(\Omega_{\mathrm{HS}})=1$). Let $\Omega_{\eta}\in\calF$ be the event where $\eta'(t)\in \partial g(X(t))$ for almost all $T>t_0$ ($\Pro(\Omega_{\eta})=1$). Finally, let $\Omega_{\mathrm{conv}} \eqdef \widetilde{\Omega}\cap\Omega_{\rm{cont}}\cap\Omega_{\mathrm{HS}}\cap\Omega_M\cap\Omega_{\eta}$, hence $\Pro(\Omega_{\mathrm{conv}})=1$. Let also $\omega\in\Omega_{\mathrm{conv}}\subseteq \Omega_{\mathrm{HS}}$ arbitrary, then \[\liminf_{t\rightarrow +\infty} \norm{ \nabla f(X(\omega,t))-\nabla f(x^{\star})}=0.\] If also \[\limsup_{t\rightarrow +\infty} \norm{ \nabla f(X(\omega,t))-\nabla f(x^{\star})}=0,\] then we conclude with the proof. Suppose by contradiction that there exists $\omega_0\in \Omega_{\mathrm{conv}}$ such that \[\limsup_{t\rightarrow +\infty} \norm{\nabla f(X(\omega_0,t))-\nabla f(x^{\star})}>0.\] Then, by Lemma~\ref{existenceof}, there exists $\delta(\omega_0)>0$ satisfying 
\[
0=\liminf_{t\rightarrow +\infty} \norm{\nabla f(X(\omega_0,t))-\nabla f(x^{\star})}<\delta(\omega_0)<\limsup_{t\rightarrow +\infty} \norm{\nabla f(X(\omega_0,t))-\nabla f(x^{\star})},
\] 
and there exists $(t_k)_{k\in\N}\subset \R_+$ such that $\lim_{k\rightarrow +\infty} t_k=+\infty$, 
\[ 
\norm{\nabla f(X(\omega_0,t_k))-\nabla f(x^{\star})}>\delta(\omega_0) \qandq t_{k+1}-t_k>1, \quad \forall k\in\N.
\]

Additionally, consider $\eta'(\omega_0,t)\in \partial g(X(\omega_0,t))$ for almost all $T>t_0$. Since $\sup_{t\geq t_0}\norm{X(\omega_0,t)}<+\infty$, $\partial g$ is full domain, and the fact that $\partial g$ maps bounded sets onto bounded sets, we have that there exists $C_{\eta}(\omega_0)\geq 0$ such that $\Vert\eta'(\omega_0,t)\Vert^2\leq C_{\eta}(\omega_0)$ for almost all $T>t_0$.

\smallskip

We allow ourselves the abuse of notation $X(t)\eqdef X(\omega_0,t), \eta'(t)\eqdef \eta'(\omega_0,t), C_{\eta}\eqdef C_{\eta}(\omega_0)$ and $\delta\eqdef \delta(\omega_0)$ during the rest of the proof from this point.
 
\smallskip

Let  \begin{itemize}
    \item $C_0\eqdef C_{\eta}+\Vert \nabla f(x^{\star})\Vert^2$;
    \item $C_1\eqdef\frac{(2C_0+1)^2-1}{C_0}>0$;
    \item $\varepsilon\in \left]0,\min\{\frac{\delta^2}{4L^2},C_1\}\right[$; \item and $C(\varepsilon)\eqdef\frac{\sqrt{C_0\varepsilon+1}-1}{4C_0}\in ]0,\frac{1}{2}]$.
    \end{itemize}
    Note that this choice entails that the intervals $\pa{[t_k,t_k+C(\varepsilon)]}_{k \in \N}$ are disjoint. On the other hand, according to the convergence property of $N_t$ and the fact that $\norm{\nabla f(X(t))-\nabla f(x^{\star})}\in \Lp^2([t_0,+\infty[)$, there exists $k'>0$ such that for every $k\geq k'$
\[
\sup_{t\geq t_k}\Vert N_t-N_{t_k}\Vert^2<\frac{\varepsilon}{4} \qandq \int_{t_k}^{+\infty} \Vert \nabla f(X(t))-\nabla f(x^{\star})\Vert^2dt<1.
\]
Also, we compute \begin{align*}
    \int_{t_k}^{t}\norm{\eta'(s)+\nabla f(X(s))}^2ds&\leq 2\int_{t_k}^{t}\norm{\nabla f(X(s))-\nabla f(x^{\star})}^2ds+2\int_{t_k}^{t}\norm{\eta'(s)+\nabla f(x^{\star})}^2ds\\
    &\leq 2+4C_0(t-t_k).
\end{align*}

Furthermore, $C(\varepsilon)$ was chosen such that $C(\varepsilon)+2C_0C(\varepsilon)^2\leq \frac{\varepsilon}{8}$. Besides for every $k\geq k'$, $t\in [t_k,t_k+C(\varepsilon)]$,
\begin{align*}
\norm{ X(t)-X(t_k)}^2&\leq 2(t-t_k)\int_{t_k}^t\norm{\eta'(s)+\nabla f(X(s))}^2 ds+2\Vert N_t-N_{t_k}\Vert^2\\
&\leq 4(t-t_k)+8C_0(t-t_k)^2+\frac{\varepsilon}{2}\leq \varepsilon.    
\end{align*}

Since $\nabla f$ is $L$-Lipschitz and $L^2\varepsilon\leq \left(\frac{\delta}{2}\right)^2$ by assumption on $\varepsilon$, we have that for every $k\geq k'$ and $t\in [t_k,t_k+C(\varepsilon)]$
\[
\norm{ \nabla f(X(t))-\nabla f(X(t_k))}^2\leq L^2\norm{ X(t)-X(t_k)}^2\leq \pa{\frac{\delta}{2}}^2.
\]
Therefore, for every $k\geq k'$, $t\in [t_k,t_k+C(\varepsilon)]$
\[
\norm{\nabla f(X(t))-\nabla f(x^{\star})}\geq  \norm{ \nabla f(X(t_k))-\nabla f(x^{\star})}-\underbrace{\norm{ \nabla f(X(t))- \nabla f(X(t_k))}}_{\leq \frac{\delta}{2}}\geq \frac{\delta}{2}.
\]
Finally, 
\begin{align*}
\int_{t_0}^{+\infty} \norm{ \nabla f(X(s))-\nabla f(x^{\star})}^2 ds&\geq \sum_{k\geq k'}\int_{t_k}^{t_k+C(\varepsilon)} \norm{ \nabla f(X(s))-\nabla f(x^{\star})}^2 ds\\
&\geq\sum_{k\geq k'}\frac{\delta^2C(\varepsilon)}{4}=+\infty ,
\end{align*}
which contradicts $\norm{\nabla f(X(\cdot))-\nabla f(x^{\star})}\in \Lp^2([t_0,+\infty[)$. So, for every $\omega\in \Omega_{\mathrm{conv}}$, 
\begin{align*}
\limsup_{t\rightarrow +\infty} \norm{\nabla f(X(\omega,t))-\nabla f(x^{\star})}&=\liminf_{t\rightarrow +\infty} \norm{\nabla f(X(\omega, t))-\nabla f(x^{\star})} \\
&=\lim_{t\rightarrow +\infty} \norm{\nabla f(X(\omega,t))-\nabla f(x^{\star})}=0.
\end{align*}
%

%
On the other hand, since $F$ is convex, by \eqref{ecconvfg}, we obtain \begin{equation}
    \int_{t_0}^{+\infty} F(X(t))-\min F \hspace{0.1cm}dt<+\infty,\quad a.s..
\end{equation}
Since $\sup_{t\geq t_0}\Vert X(t)\Vert<+\infty$ a.s., and $(\partial F)$ maps bounded sets onto bounded sets (since $g$ is convex and continuous), we can show that there exists $\tilde{L}>0$ such that \[ |F(X(t_1))-F(X(t_2))|\leq \tilde{L}\Vert X(t_1)-X(t_2)\Vert, \quad \forall t_1,t_2\geq t_0, a.s..\]

Using the same technique as before, we can conclude that $\lim_{t\rightarrow+\infty }F(X(t))=\min F$ a.s..

\item Let $\omega\in\Omega_{\mathrm{conv}}$ and $\widetilde{X}(\omega)$ be a weak sequential limit point of $X(\omega,t)$. Equivalently, there exists an increasing sequence $ (t_k)_{k\in\N}\subset \R_+$ such that $\lim_{k\rightarrow +\infty} t_k=+\infty$ and 
\[
\wlim_{k\rightarrow +\infty} X(\omega, t_k) = \widetilde{X}(\omega).
\]
Since $\lim_{t\rightarrow +\infty} F(X(\omega,t))=\min F$ and the fact that $F$ is weakly lower semicontinuous (since it is convex and continuous), we obtain directly that $\widetilde{X}(\omega)\in \calS_F$. Finally, by Opial's Lemma (see \cite{opial}) we conclude that there exists $X^{\star}(\omega)\in \calS_F$ such that \[\wlim_{t\rightarrow +\infty}X(\omega,t)= X^{\star}(\omega).\] In other words, since $\omega\in\Omega_{\mathrm{conv}}$ was arbitrary, there exists an $\calS_F$-valued random variable $X^{\star}$ such that $\wlim_{t\rightarrow +\infty} X(t)= X^{\star}$ a.s..
\end{renumerate}
\end{proof}

\subsection{Convergence rates of the objective}
The following result, stated below, summarizes the global convergence rates in expectation satisfied by the trajectories of \eqref{SDI0}, and it is a natural extension of \cite[Theorem 3.2]{mio} to the non-smooth setting.

\begin{theorem}\label{importante0}
Consider the dynamic \eqref{SDI0} where $F=f+g$ and $\sigma$ satisfy assumptions \eqref{H0} and \eqref{H}. Furthermore, assume that $\partial g$ satisfies \eqref{Hl} and that $X_0 \in \Lp^2(\Omega;\H)$ and is $\calF_0$-measurable. The following statements are satisfied by the unique solution trajectory $X \in S_{\H}^2[t_0]$ of \eqref{SDI0}:

\smallskip

\begin{renumerate}
\item \label{0i} Let $\displaystyle\overline{F\circ X}(t)\eqdef t^{-1}\int_{t_0}^t F(X(s))ds$ and $\displaystyle\overline{X}(t)=t^{-1}\int_{t_0}^t X(s)ds$. Then 
\begin{equation}\label{eq:0i}
\EE\pa{F(\overline{X}(t))-\min F}\leq \EE\pa{\overline{F\circ X}(t)-\min F}\leq \frac{\EE\pa{\dist(X_0,\calS_F)^2}}{2t}+\frac{\sigma_*^2}{2}, \quad \forall t> t_0.
\end{equation}
Besides, if $\sigma_{\infty}$ is $\Lp^2([t_0,+\infty[)$, then 
\begin{equation}
\EE\pa{F(\overline{X}(t))-\min F}\leq \EE\pa{\overline{F\circ X}(t)-\min F}=\calO\pa{\frac{1}{t}} .
\end{equation}

\item \label{0ii} Moreover, if $F\in \Gamma_{\mu}(\H)$ with $\mu > 0$, then $\calS_F=\{x^{\star}\}$ and 
       
\begin{equation}
\EE\pa{\norm{ X(t)-x^{\star}}^2}\leq \EE\pa{\norm{X_0-x^{\star}}^2}e^{-\mu t}+\frac{\sigma_*^2}{\mu}, \quad \forall t> t_0.
\end{equation}
Besides, if $\sigma_{\infty}$ is non-increasing and vanishes at infinity, then: 
\begin{equation}\label{conv_rate_strongly_convex}
\EE\pa{\norm{X(t)-x^{\star}}^2}\leq \EE\pa{\norm{X_0-x^{\star}}^2}e^{-\mu t}+\frac{\sigma_*^2}{\mu}e^{\frac{\mu t_0}{2}}e^{-\frac{\mu t}{2}}+\sigma_{\infty}^2\left(\frac{ t_0+t}{2}\right), \quad \forall t>t_0.
\end{equation}

\end{renumerate}

\end{theorem}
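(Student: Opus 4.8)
The plan is to apply the Itô formula of Proposition~\ref{itos} to the anchor function $\phi(x) = \tfrac{1}{2}\norm{x-x^\star}^2$ and to convert the resulting energy identity into a differential inequality for $t \mapsto \EE[\phi(X(t))]$, which is then integrated. For \ref{0i}, working conditionally on $\calF_0$ I would fix $x^\star = \proj_{\calS_F}(X_0)$, so that $\EE[\phi(X(t_0))] = \tfrac{1}{2}\EE[\dist(X_0,\calS_F)^2]$. Since $\nabla\phi(x) = x-x^\star$ and $\nabla^2\phi = I_\H$, Proposition~\ref{itos} gives
\[
\EE[\phi(X(t))] = \EE[\phi(X(t_0))] - \EE\pa{\int_{t_0}^t \dotp{\eta'(s)+\nabla f(X(s))}{X(s)-x^\star}\,ds} + \frac{1}{2}\EE\pa{\int_{t_0}^t \tr(\Sigma(s,X(s)))\,ds},
\]
the martingale term having zero expectation because $\norm{\sigma^\star(s,X(s))(X(s)-x^\star)}^2 \le \sigma_*^2\norm{X(s)-x^\star}^2$ is integrable on every $[t_0,t]$ thanks to the finite-horizon bound $\EE[\sup_{s\in[t_0,t]}\norm{X(s)}^2]<+\infty$ from Theorem~\ref{exiuni}. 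Since $\eta'(s)+\nabla f(X(s)) \in \partial F(X(s))$ and $F(x^\star)=\min F$, the subgradient inequality yields $\dotp{\eta'(s)+\nabla f(X(s))}{X(s)-x^\star} \ge F(X(s))-\min F$; combined with $\tr(\Sigma(s,X(s))) = \norm{\sigma(s,X(s))}_{\HS}^2 \le \sigma_*^2$, rearranging and dividing by $t$ gives the desired bound on $\EE[\overline{F\circ X}(t)-\min F]$.

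The left inequality of \eqref{eq:0i} then follows from Jensen's inequality and convexity of $F$, which give $F(\overline{X}(t)) \le \overline{F\circ X}(t)$. For the $\Lp^2$ refinement I would instead keep $\tr(\Sigma(s,X(s))) \le \sigma_\infty^2(s)$, so that
\[
\int_{t_0}^t \EE[F(X(s))-\min F]\,ds \le \EE[\phi(X(t_0))] + \frac{1}{2}\int_{t_0}^{+\infty}\sigma_\infty^2(s)\,ds < +\infty,
\]
whence the $\calO(1/t)$ rate after dividing by $t$.

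For \ref{0ii}, strong convexity forces $\calS_F = \{x^\star\}$. Running the same Itô computation, I would now invoke the $\mu$-strong convexity inequality $\dotp{\xi}{X(s)-x^\star} \ge (F(X(s))-\min F) + \tfrac{\mu}{2}\norm{X(s)-x^\star}^2$ for $\xi \in \partial F(X(s))$ and drop the nonnegative gap, so that $V(t) \eqdef \EE[\norm{X(t)-x^\star}^2]$ satisfies $V'(t) \le -\mu V(t) + \sigma_*^2$. Grönwall's lemma gives $V(t) \le V(t_0)e^{-\mu(t-t_0)} + \tfrac{\sigma_*^2}{\mu}$. For the refined estimate, $\sigma_*^2$ is replaced by $\sigma_\infty^2(t)$, and the integrating factor $e^{\mu t}$ yields $V(t) \le V(t_0)e^{-\mu(t-t_0)} + e^{-\mu t}\int_{t_0}^t e^{\mu s}\sigma_\infty^2(s)\,ds$. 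I would control the last term by splitting the integral at the midpoint $\tfrac{t_0+t}{2}$: on $[t_0,\tfrac{t_0+t}{2}]$ I bound $\sigma_\infty^2(s)\le\sigma_*^2$, producing a term of order $e^{-\mu t/2}$; on $[\tfrac{t_0+t}{2},t]$ the monotonicity of $\sigma_\infty$ gives $\sigma_\infty^2(s)\le\sigma_\infty^2(\tfrac{t_0+t}{2})$, producing the residual term proportional to $\sigma_\infty^2(\tfrac{t_0+t}{2})$.

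The main obstacle I anticipate is not the algebra but the rigorous justification that the stochastic integral appearing in the Itô formula is a genuine mean-zero martingale; this rests on the finite-interval moment bound $\EE[\sup_{s\in[t_0,t]}\norm{X(s)}^2]<+\infty$ of Theorem~\ref{exiuni} (needed even in the bounded-noise regime where $\sigma_\infty\in\Lp^2$ is not assumed), and may require a localization-and-Fatou argument with stopping times $\tau_n=\inf\{t : \norm{X(t)-x^\star}\ge n\}$ exactly as in the proof of Theorem~\ref{converge}. A secondary care point is the $\calF_0$-measurable choice $x^\star=\proj_{\calS_F}(X_0)$: one argues conditionally on $\calF_0$ so that $x^\star$ is treated as constant along the trajectory, which is precisely what produces $\dist(X_0,\calS_F)^2$ rather than the weaker $\inf_{x^\star}\EE[\norm{X_0-x^\star}^2]$.
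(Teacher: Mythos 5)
Your proposal is correct and follows essentially the same route as the paper, which proves this result by applying It\^o's formula to the anchor $\phi(x)=\tfrac{1}{2}\norm{x-x^{\star}}^2$ together with the subgradient inequality $F(x)-\min F\leq \dotp{y}{x-x^{\star}}$ (and its strongly convex strengthening), then invoking the computations of the smooth case. The details you supply — the $\calF_0$-measurable choice $x^{\star}=\proj_{\calS_F}(X_0)$, the square-integrability of the martingale term via $X\in S_{\H}^2[t_0]$, Jensen for the ergodic bound, Gr\"onwall and the midpoint splitting of $e^{-\mu t}\int_{t_0}^t e^{\mu s}\sigma_{\infty}^2(s)\,ds$ — are exactly the ones the paper leaves implicit.
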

\begin{proof}

\tcb{By making use of the inequality
\begin{equation}\label{convexsf}
F(x) - \min F \leq \langle y, x - x^{\star} \rangle, \quad \text{for all } y \in \partial F(x) \text{ and } x^{\star} \in \mathcal{S}_F,    
\end{equation}
which is given by the convexity of $F$, and considering the anchor function defined by
\[
\phi(x) = \frac{1}{2} \|x - x^{\star}\|^2, \quad \text{for } x^{\star} \in \mathcal{S}_F,
\]
 we apply It\^o's formula to $\phi(X(t))$ and take expectation. Combining \eqref{convexsf} with the result of It\^o's formula leads to an integral equation governing the expected behavior of $\phi(X(t))$, which can subsequently be leveraged to establish convergence rates in expectation of the process. Since the arguments are essentially identical to those of \cite[Theorem~3.2]{mio}, we refrain from reproducing the details here.}

\end{proof}

\begin{remark}
\tcb{
   In the deterministic setting, \ie, \eqref{DI}, the quantity $F(\overline{x}(t))-\min F$ decreases monotonically and one directly obtains the standard $\mathcal{O}(1/t)$, besides when $F$ is strongly convex, we have linear convergence rate of the distance to the unique minimizer, these convergence rates are obtained defining the same Lyapunov functions as in Theorem \ref{importante0}, \ie, $\phi(x) = \frac{1}{2} \|x - x^{\star}\|^2$ for $x^{\star} \in \mathcal{S}_F$.  By contrast, for the stochastic differential inclusion \eqref{SDI0}, the main challenge we must overcome is that the term $\sigma(t,X(t))dW(t)$ generates both a martingale difference noise and a quadratic variation term. We note that the martingale difference noise term vanishes after taking expectation and that the quadratic variation term is controlled by uniform bounds on $\sigma$, leading to the bias terms $\tfrac{\sigma_*^2}{2}$ (and $\tfrac{\sigma_*^2}{\mu}$ under strong convexity), moreover when $\sigma_{\infty}$ is square integrable, the bias terms vanishes over time. These challenges, and the way they are addressed, are essential for extending the classical ODE-based analysis to the stochastic setting.
Naturally, setting $\sigma \equiv 0$ in \eqref{SDI0} eliminates the stochastic terms, and all the estimates in Theorem~\ref{importante0} reduce exactly to the classical bounds known for the deterministic subgradient flow.
}
\end{remark}


\section{Tikhonov regularization: Convergence properties for convex functions}\label{sec:tikhonov}
It is important to provide insight into the technique of Tikhonov regularization. This allows us to pass from the almost sure weak convergence towards the set of minimizers of the trajectory generated by \eqref{SDI} to achieving almost sure strong convergence of the trajectory generated by \eqref{CSGD}, not only towards the set of minimizers but to the minimal norm solution. The \tcb{trade-off} in order to achieve this is the proper tuning of the Tikhonov parameter that depends on a local constant that could be hard to compute, besides that, we obtain slower convergence rates of the objective, passing from $\mathcal{O}(t^{-1})$ to $\mathcal{O}(t^{-r}+R(t))$, where $r<1$ and $R(t)\rightarrow 0$ (defined below in \eqref{eqdefr}).

\smallskip

\subsection{Almost sure convergence of the trajectory to the minimal norm solution}

Our second main result establish almost sure convergence of $X(t)$ to  $x^{\star}=\proj_{\calS_F}(0)$ as $t \to +\infty$. It is based on a subtle tuning of the Tikhonov parameter  $\varepsilon (t)$ formulated as conditions \ref{t1}, \ref{t2}, and \ref{t3}  below. We know that $ \|x^{\star} \|^2 -  \|x_{\varepsilon (t)} \|^2$ tends to zero as $t \to +\infty$. \tcb{By capitalizing on Proposition~\ref{ratetikhonov}, we shall see in Theorem \ref{practical} that the conditions \ref{t1}, \ref{t2}, and \ref{t3} are compatible for functions verifying H\"olderian-type error bounds, which is the case for {\L}ojasiewicz functions (see Definition~\ref{def:lojq} and Proposition~\ref{ebw})}.

\begin{theorem}\label{converge20}
Consider the dynamic \eqref{CSGD} where $F=f+g$ and $\sigma$ satisfy the assumptions \eqref{H0} and \eqref{H}, respectively, furthermore assume that $\partial g$ satisfy \eqref{Hl}. Let $\nu\geq 2$, and its initial data $X_0\in\Lp^{\nu}(\Omega;\H)$. Then, there exists a unique solution $X\in S_{\H}^{\nu}[t_0]$ of \eqref{CSGD}. Let $x^{\star}\eqdef\proj_{\calS_F}(0)$ be the minimum norm solution, and for $\varepsilon>0$ let $x_{\varepsilon}$ be the unique minimizer of $F_{\varepsilon}(x)\eqdef F(x)+\frac{\varepsilon}{2}\|x\|^2$. Suppose that 
$\sigma_{\infty}\in \Lp^2([t_0,+\infty[)$, and that $\varepsilon:[t_0,+\infty[\rightarrow\R_+$ satisfies the conditions:

\vspace{2mm}
\begin{Tenumerate}
    \item \label{t1} $\varepsilon (t) \to 0$ \textit{as} $t \to +\infty$;
    \item \label{t2} $\displaystyle{\int_{t_0}^{+\infty} \varepsilon (t) dt = +\infty}$;
    \item \label{t3} $\displaystyle{ \int_{t_0}^{+\infty} \varepsilon (t) \left(    \|x^{\star} \|^2 -  \|x_{\varepsilon (t)} \|^2  \right)dt  <+\infty}. $
\end{Tenumerate}

\vspace{2mm}

\noindent Then we have

\begin{renumerate}
\item \label{acota} 
$ \displaystyle\int_{t_0}^{+\infty} \varepsilon (t)  \EE[\|X(t) - x^{\star}\|^2]dt<+\infty$.
\vspace{1mm}
\item   $\lim_{t\rightarrow +\infty} \norm{ X(t)-x^{\star}}$ exists a.s. and $\sup_{t\geq t_0}\norm{ X(t)}<+ \infty$ a.s..
\item $\displaystyle\int_{t_0}^{+\infty} \varepsilon(t)\norm{ X(t)-x^{\star}}^2 dt<+\infty$ a.s..
\item $\slim_{t \to +\infty}X(t)=x^{\star}$ a.s.
\vspace{1mm}
 \end{renumerate}
\end{theorem}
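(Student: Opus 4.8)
The plan is to transplant the deterministic Cominetti--Peypouquet--Sorin argument recalled above into an It\^o expansion and then invoke the Robbins--Siegmund-type convergence result. First I would fix $x^\star=\proj_{\calS_F}(0)$, set $h(t)\eqdef \frac12\norm{X(t)-x^\star}^2$, and apply the It\^o formula of Proposition~\ref{itos} with the anchor $\phi(x)=\frac12\norm{x-x^\star}^2$, for which $\nabla\phi(x)=x-x^\star$ and $\nabla^2\phi\equiv I$. Writing the drift of \eqref{CSGD} as $b(s,X(s))-\eta'(s)$ with $b(s,x)=-\nabla f(x)-\varepsilon(s)x$ and $\eta'(s)\in\partial g(X(s))$, and setting $y(s)\eqdef \nabla f(X(s))+\eta'(s)+\varepsilon(s)X(s)\in\partial F_{\varepsilon(s)}(X(s))$, this yields
\[
h(t)=h(t_0)-\int_{t_0}^t \dotp{y(s)}{X(s)-x^\star}\,ds+\frac12\int_{t_0}^t \tr(\Sigma(s,X(s)))\,ds+M_t,
\]
where $M_t\eqdef \int_{t_0}^t \dotp{\sigma^\star(s,X(s))(X(s)-x^\star)}{dW(s)}$.

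The key algebraic step reproduces the deterministic inequality inside the expansion. Using the $\varepsilon(s)$-strong convexity of $F_{\varepsilon(s)}$ at $(X(s),x^\star)$, then $F_{\varepsilon(s)}(X(s))\geq F_{\varepsilon(s)}(x_{\varepsilon(s)})$ and $F(x^\star)\leq F(x_{\varepsilon(s)})$, exactly as in the chain leading to \eqref{dert-2}, I obtain the pointwise bound
\[
-\dotp{y(s)}{X(s)-x^\star}\leq -\varepsilon(s)h(s)+\frac{\varepsilon(s)}{2}\pa{\norm{x^\star}^2-\norm{x_{\varepsilon(s)}}^2}.
\]
Substituting this and absorbing the nonnegative defect into a nondecreasing process, I can recast the expansion as
\[
h(t)=h(t_0)+A_t-U_t+M_t,
\]
where $A_t\eqdef \frac12\int_{t_0}^t \varepsilon(s)\pa{\norm{x^\star}^2-\norm{x_{\varepsilon(s)}}^2}\,ds+\frac12\int_{t_0}^t\tr(\Sigma(s,X(s)))\,ds$ is nondecreasing (using Browder's property $\norm{x_{\varepsilon}}\leq\norm{x^\star}$ and $\tr(\Sigma)\geq0$), $U_t$ collects $\int_{t_0}^t\varepsilon(s)h(s)\,ds$ together with the defect and is therefore also nondecreasing, and $M_t$ is a local martingale. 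Since $\tr(\Sigma(s,X(s)))\leq\sigma_\infty^2(s)$, condition \ref{t3} together with $\sigma_\infty\in\Lp^2$ forces $\lim_{t\to+\infty}A_t<+\infty$ a.s.

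With this decomposition the remaining steps are short. Applying Theorem~\ref{impp} exactly as in the proof of Theorem~\ref{converge} yields simultaneously that $\lim_{t\to+\infty}h(t)$ exists a.s.---whence $\lim_{t\to+\infty}\norm{X(t)-x^\star}$ exists and $\sup_{t\geq t_0}\norm{X(t)}<+\infty$ a.s.\ (item~(ii))---and that $U_\infty<+\infty$ a.s.; as both pieces of $U_t$ are nonnegative, this gives $\int_{t_0}^{+\infty}\varepsilon(s)\norm{X(s)-x^\star}^2\,ds<+\infty$ a.s.\ (item~(iii)). For the expectation bound of item~\ref{acota}, I would localize $M_t$ by the stopping times $\tau_n$ to make it a genuine martingale on $[t_0,T]$, use the $S_\H^\nu$-regularity of $X$ from Theorem~\ref{exiuni} to justify $\EE[M_{t\wedge\tau_n}]=0$ and pass to the limit, take expectations in $h(t)=h(t_0)+A_t-U_t+M_t$, and bound $\EE[A_\infty]$ by Tonelli using \ref{t3} and $\sigma_\infty\in\Lp^2$; since $\EE[U_t]\geq\int_{t_0}^t\varepsilon(s)\EE[h(s)]\,ds$, letting $t\to+\infty$ gives $\int_{t_0}^{+\infty}\varepsilon(s)\EE[\norm{X(s)-x^\star}^2]\,ds<+\infty$.

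Finally, strong convergence (item~(iv)) follows by a purely pathwise argument on the full-probability event where both the limit of $\norm{X(t)-x^\star}$ exists and $\int_{t_0}^{+\infty}\varepsilon(s)\norm{X(s)-x^\star}^2\,ds<+\infty$: if that limit were some $\ell>0$, then $\norm{X(s)-x^\star}^2\geq \ell^2/2$ for all $s\geq T$ large enough, so that
\[
\int_{t_0}^{+\infty}\varepsilon(s)\norm{X(s)-x^\star}^2\,ds\geq \frac{\ell^2}{2}\int_T^{+\infty}\varepsilon(s)\,ds=+\infty
\]
by \ref{t2}, a contradiction; hence the limit is $0$ a.s., i.e.\ $\slim_{t\to+\infty}X(t)=x^\star$ a.s. The main obstacle is the second paragraph: correctly transporting the Tikhonov/strong-convexity inequality into the It\^o framework and organizing the terms so that $A_t$ is genuinely nondecreasing and a.s.\ convergent while the leftover feeds a nondecreasing $U_t$, which is precisely what makes Theorem~\ref{impp} applicable. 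A secondary technicality is upgrading $M_t$ to a true martingale, which is handled by localization and the a priori moment bounds of Theorem~\ref{exiuni}.
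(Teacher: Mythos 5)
Your proposal is correct and follows essentially the same route as the paper: the same anchor $\phi(x)=\tfrac12\norm{x-x^\star}^2$, the same transport of the Tikhonov strong-convexity inequality \eqref{dert-2-b} into the It\^o expansion, the same regrouping into nondecreasing processes (the paper's $\tilde A_t,\tilde U_t$) so that Theorem~\ref{impp} yields items (ii)--(iii), and the same ``existing limit plus \ref{t2} forces the limit to vanish'' argument for item (iv), which is exactly Lemma~\ref{lim0} proved inline. The only cosmetic difference is in item (i), where the paper runs an integrating-factor computation on $\int_{t_0}^t\varepsilon(s)\EE[\phi(X(s))]\,ds$ while you bound it directly by $\EE[A_\infty]$ after taking expectations; both are valid.
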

\tcb{
\begin{remark}
We note that assumptions \ref{t1} and \ref{t2} are the same as those required in the deterministic setting, namely in Theorem~\ref{cps}. However, assumption~\ref{t3} is new and can be viewed as the price for moving to the stochastic case. This term appears in the proof of Theorem~\ref{cps}, specifically on the right-hand side of \eqref{xt3}. While we could recover Theorem \ref{cps} by directly assuming \ref{t3}, this is unnecessary in the deterministic case. In the latter, one deals with the differential inequality \eqref{xt3} to get the desired conclusion without relying on \ref{t3}. In contrast, in the stochastic setting, we obtain an integral inequality that cannot be handled in the same way, as the stochasticity removes some of the monotonicity structure inherent to the deterministic problem.
\end{remark}
}
\begin{proof}
The existence and uniqueness of a solution $X\in S_{\H}^{\nu}[t_0]$ follow directly from the fact that the conditions of Theorem~\ref{exiuni} are satisfied under \eqref{H0} and \eqref{H}. The only subtlety to check is that $\sup_{t\geq t_0}|\varepsilon(t)|<+\infty$, but this can be assumed without loss of generality since $\varepsilon(t)\rightarrow 0$ as $t\rightarrow+\infty$ (it might be necessary a redefinition of $t_0$). \smallskip
 
Our stochastic dynamic \eqref{CSGD} can be written equivalently as follows

\begin{equation}\label{CSGD4}\tag{$\mathrm{SDIT}$}
\begin{cases}
\begin{aligned}
dX(t)&\in -\partial F_{\varepsilon (t)}(X(t))dt  +\sigma(t,X(t))dW(t), \quad t\geq t_0;\\
X(t_0)&=X_0,
\end{aligned}
\end{cases}
\end{equation}

\begin{renumerate}
    \item 
Let us define the anchor function $\phi(x)=\frac{\norm{x-x^{\star}}^2}{2}$. Since $\partial g$ satisfy \eqref{Hl}, there exists a stochastic process $\tilde{\eta}:\Omega\times [t_0,+\infty[\rightarrow\H$ such that $\tilde{\eta}(t)\in\partial F_{\varepsilon (t)}(X(t))$ a.s. for almost all $t\geq t_0$.
Using It\^o's formula we obtain
\begin{align}
\phi(X(t))&=\underbrace{\frac{\norm{X_0-x^{\star}}^2}{2}}_{\xi}+\underbrace{\frac{1}{2}\int_{t_0}^t \tr[\Sigma(s,X(s))ds]}_{A_t}-\underbrace{\int_{t_0}^t \dotp{\tilde{\eta}(s)}{X(s)-x^{\star}} ds}_{U_t} \nonumber \\
&+\underbrace{\int_{t_0}^t \dotp{\sigma^{\star}(s,X(s))\pa{X(s)-x^{\star}}}{dW(s)}}_{M_t}. 
\label{basic_Ito1}
\end{align}
Since $X\in S_{\H}^2[t_0]$ by Proposition~\ref{itos}, we have for every $T>t_0$, that 
\[
\EE\pa{\int_{t_0}^T\norm{\sigma^{\star}(s,X(s))\pa{X(s)-x^{\star}}}^2ds} \leq \EE\pa{\sup_{t\in [t_0,T]}\norm{ X(t)-x^{\star}}^2}\int_{t_0}^{+\infty} \sigma_{\infty}^2(s)ds<+\infty.
\]
Therefore $M_t$ is a square-integrable continuous martingale. It is also a continuous local martingale, which implies that $\EE(M_t)=0$.

Let us now take the expectation of \eqref{basic_Ito1}. Using that 
\[
0\leq\tr[\Sigma(s,X(s))] \leq \sigma_{\infty}^2(s), 
\] 
and \eqref{dert-2} that we recall below
\begin{equation}\label{dert-2-b}
\left\langle  y(t),  X(t)- x^{\star} \right\rangle \geq   \varepsilon (t)\phi(X(t))  + \frac{\varepsilon (t)}{2} \left(    \|x_{\varepsilon (t)} \|^2 -  \|x^{\star} \|^2 \right),
\end{equation}
where $y:\Omega\times [t_0,+\infty[\rightarrow \H$ is such that $y(t)\in \partial F_{\varepsilon (t)} (X(t))$ a.s.. We obtain that
\begin{eqnarray*}
&&\EE\pa{\phi(X(t))}+ \int_{t_0}^{t} \varepsilon (s) \EE\pa{\phi(X(s))} ds
\\
&& \leq \EE\pa{\frac{\norm{ X_0-x^{\star}}^2}{2}}+\frac{1}{2}\int_{t_0}^{t} \sigma_{\infty}^2(s)ds + \frac{1}{2}\int_{t_0}^{t} \varepsilon (s) \left(    \|x^{\star} \|^2 -  \|x_{\varepsilon (s)} \|^2  \right)ds.
\end{eqnarray*}
According to our assumptions, we can write briefly the above relation as 
\begin{equation}\label{SDE-basic-T1}
\EE\pa{\phi(X(t))}+ \int_{t_0}^{t} \varepsilon (s) \EE\pa{\phi(X(s))} ds\leq \tcb{\Upsilon}(t),
\end{equation}
with $\tcb{\Upsilon}$ a nonnegative function defined by
\[\tcb{\Upsilon}(t) \eqdef \EE\pa{\frac{\norm{ X_0-x^{\star}}^2}{2}}+\frac{1}{2}\int_{t_0}^{t} \sigma_{\infty}^2(s)ds + \frac{1}{2}\int_{t_0}^{t} \varepsilon (s) \left(    \|x^{\star} \|^2 -  \|x_{\varepsilon (s)} \|^2  \right)ds
\]
which satisfies $\lim_{t\to +\infty} \tcb{\Upsilon}(t)= \tcb{\Upsilon}_{\infty} <+\infty$ by the fact that $X_0\in \Lp^2(\Omega;\H)$, $\sigma_{\infty}\in\Lp^2([t_0,+\infty[)$ and \ref{t3}.\\

Let us integrate the above relation \eqref{SDE-basic-T1}. We set
\[
\theta (t) \eqdef \int_{t_0}^{t} \EE\pa{\phi(X(s))} ds.
\]
We have $\dot{\theta}(t)=  \EE\pa{\phi(X(t))} $ and \eqref{SDE-basic-T1} is written equivalently as 
\begin{equation}\label{SDE-basic-T2}
\dot{\theta}(t)+ \int_{t_0}^{t} \varepsilon (s) \dot{\theta}(s) ds\leq \tcb{\Upsilon}(t).
\end{equation}
Equivalently
\begin{equation}\label{SDE-basic-T3}
\frac{1}{\varepsilon (t)}\dfrac{d}{dt}  \int_{t_0}^{t} \varepsilon (s) \dot{\theta}(s) ds+ \int_{t_0}^{t} \varepsilon (s) \dot{\theta}(s) ds\leq \tcb{\Upsilon}(t),
\end{equation} 
that is 
\begin{equation}\label{SDE-basic-T4}
\dfrac{d}{dt}  \int_{t_0}^{t} \varepsilon (s) \dot{\theta}(s) ds+ \varepsilon (t) \int_{t_0}^{t} \varepsilon (s) \dot{\theta}(s) ds\leq \varepsilon (t)\tcb{\Upsilon}(t).
\end{equation}
With $m(t) \eqdef \exp{\int_{t_0}^t \varepsilon (s)ds}$, we get
\begin{equation}\label{SDE-basic-T5}
\dfrac{d}{dt} \left( m(t) \int_{t_0}^{t} \varepsilon (s) \dot{\theta}(s) ds\right)  \leq \varepsilon (t) m(t) \tcb{\Upsilon}(t).
\end{equation}
After integration we get
\begin{equation}\label{SDE-basic-T6}
\int_{t_0}^{t} \varepsilon (s) \dot{\theta}(s) ds  \leq \frac{1}{m(t)}
 \int_{t_0}^{t}  m'(s) \tcb{\Upsilon}(s)ds.
\end{equation}
Since $\tcb{\Upsilon}$ is bounded by assumption \ref{t3}, we get

\[\sup_{t\geq t_0}\EE \Big[  \displaystyle{\int_{t_0}^{t} \varepsilon (s)  \|X(s) - x^{\star}\|^2}\Big]ds <+\infty.\] 
Equivalently
\[\int_{t_0}^{+\infty }  \EE \Big[ \|X(t) - x^{\star}\|^2\Big] \varepsilon (t) dt <+\infty .
\]
The assumption \ref{t2}
guarantees that the above inequality forces 
$\EE \Big[ \|X(t) - x^{\star}\|^2\Big]$ to tend to zero. 

\item Consider \eqref{basic_Ito1}, we define \begin{eqnarray*}
    \tilde{A}_t\eqdef A_t+ \int_{t_0}^t \frac{\varepsilon(s)}{2}(\| x^{\star}\|^2-\|x_{\varepsilon(s)}\|^2) ds, \quad
    and \quad \tilde{U}_t\eqdef U_t+\int_{t_0}^t \frac{\varepsilon(s)}{2}(\| x^{\star}\|^2-\|x_{\varepsilon(s)}\|^2)ds.
\end{eqnarray*}
By \eqref{dert-2-b} we have that $\tilde{U}_t\geq \int_{t_0}^{t} \varepsilon(s)\phi(X(s))ds\geq 0$. We can rewrite \eqref{basic_Ito1} as 
\[\phi(X(t))=\xi+\tilde{A}_t-\tilde{U}_t+M_t.\]
Since $\sigma_{\infty}\in\Lp^2([t_0,+\infty[)$ and \ref{t3}, then $\lim_{t\rightarrow +\infty}\tilde{A}_t<+\infty$. Let us observe that, since $X\in S_{\H}^2[t_0]$ by Proposition~\ref{itos}, we have for every $T>t_0$ that 
\[
\EE\pa{\int_{t_0}^T\norm{\sigma^{\star}(s,X(s))\pa{X(s)-x^{\star}}}^2ds} \leq \EE\pa{\sup_{t\in [t_0,T]}\norm{ X(t)-x^{\star}}^2}\int_{t_0}^{+\infty} \sigma_{\infty}^2(s)ds<+\infty.
\]
Therefore $M_t$ is a square-integrable continuous martingale. It is also  a continuous local martingale (see \cite[Theorem 1.3.3]{mao}), which implies that $\EE(M_t)=0$. 

By Theorem \ref{impp}, we get that $\lim_{t\rightarrow +\infty} \Vert X(t)-x^{\star}\Vert$ exists a.s. and that $\lim_{t\rightarrow +\infty} \tilde{U}_t<+\infty$ a.s..

\item Using the lower bound we had on $\tilde{U}_t$, we obtain \[\int_{t_0}^{+\infty} \varepsilon(t)\norm{X(t)-x^{\star}}^2dt<+\infty.\]

\item By the previous item, \ref{t2}, and Lemma \ref{lim0} we conclude that $\lim_{t\rightarrow +\infty} X(t)=x^{\star}$ a.s..
\end{renumerate}
This completes the proof.
\end{proof}

\subsection{Practical situations}

We will consider situations where the three conditions \ref{t1}, \ref{t2} and \ref{t3} are satisfied simultaneously. These are properties of the viscosity curve that we will now study.
The difficulty comes from \ref{t2} and \ref{t3} which are a priori not compatible. Indeed, \ref{t2} requires the parameter $\varepsilon (t)$ to converge slowly towards zero for the Tikhonov regularization to be effective. On the other hand in \ref{t3} the parameter $\varepsilon (t)$ must converge sufficiently quickly towards zero so that the term
$\left( \|x^{\star} \|^2 - \|x_{\varepsilon (t)} \|^2 \right)$ converges to zero fairly quickly, and thus corrects the infinite value of the integral of $ \varepsilon(t)$.

\subsubsection{{\L}ojasiewicz property}
Our first objective is to evaluate the rate of convergence towards zero of
$\left( \|x^{\star} \|^2 - \|x_{\varepsilon} \|^2 \right)$ as $\varepsilon \to 0$.
Using the differentiability properties of the viscosity curve is not a good idea, because the viscosity curve can be of infinite length in the case of a general differentiable convex function, see \cite{torralba}. To overcome this difficulty, we assume that $F=f+g$ satisfies the {\L}ojasiewicz property. This basic property has its roots in algebraic geometry, and it essentially captures a domination inequality between the objective value and its (sub)gradient. 
\begin{definition}[{\L}ojasiewicz inequality]\label{def:lojq}
Let $f:\H\rightarrow\R$ be a convex function with $\calS\neq\emptyset$ and $q\in [0,1[$. $f$ satisfies the {\L}ojasiewicz inequality on $\calS$ with exponent $q$ if there exists $r>\min f$ and $\mu > 0$ such that:
\begin{equation}\label{li}
\mu(f(x)-\min f)^{q}\leq \norm{\partial^{0}  f(x)},\quad \forall x \in  [\min f < f < r] ,
\end{equation}
where we recall $\Vert\partial^{0}  f(x)\Vert=\min_{u\in\partial f(x)}\Vert u\Vert$. We will write $f \in \Loj^q(\calS)$. 
\end{definition}

Error bounds have also been successfully applied to various branches of optimization, and in particular to complexity analysis. Of particular interest in our setting is the H\"olderian error bound.
\begin{definition}[H\"olderian error bound]
Let $f:\H\rightarrow\R$ be a proper function such that $\calS\neq \emptyset$. Then $f$ satisfies a H\"olderian (or power-type) error bound inequality on $\calS$ with exponent $p \geq 1$, if there exists $\gamma>0$ and $r>\min f$ such that:
\begin{equation}\label{eq:errbnd}
f(x)-\min f \geq \gamma\dist(x,\calS)^p, \quad \forall x \in [\min f \leq f \leq r], 
\end{equation}
and we will write $f \in \EB^p(\calS)$.
\end{definition}

A deep result due to {\L}ojasiewicz states that for arbitrary continuous semi-algebraic functions, the H\"olderian error bound inequality holds on any compact set, and the {\L}ojasiewicz inequality holds at each point. In fact, for convex functions, the {\L}ojasiewicz property and H\"olderian error bound are actually equivalent. 

\begin{proposition}\label{ebw} 
Assume that $f\in\Gamma_0(\H)$ with $\calS\neq \emptyset$. Let $q\in[0,1[$, $p\eqdef \frac{1}{1-q}\geq 1$ and $r > \min f$. Then $f$ verifies the {\L}ojasiewicz inequality with exponent $q$ (see \eqref{li}) at $[\min f < f < r]$ if and only if the H\"olderian error bound with exponent $p$ (see \eqref{eq:errbnd}) holds on $ [\min f < f < r]$.
\end{proposition}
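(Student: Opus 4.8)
The plan is to prove both implications by pivoting on a single elementary consequence of convexity, and then to handle the harder direction with the subgradient flow. After subtracting the constant $\min f$ we may assume $\min f = 0$, so that $\calS = [f = 0]$ and the (minimal-norm) subdifferential is unchanged. For any $x \notin \calS$ set $\bar x \eqdef \proj_{\calS}(x)$; applying the subgradient inequality to the minimal-norm selection $\partial^0 f(x)$ at the points $\bar x$ and $x$, together with Cauchy--Schwarz, gives the key bound
\begin{equation}\label{eq:keyconvex-ebw}
f(x) = f(x) - f(\bar x) \leq \dotp{\partial^0 f(x)}{x - \bar x} \leq \norm{\partial^0 f(x)}\,\dist(x,\calS).
\end{equation}
This is the only place convexity enters, and it holds on all of $\H$.

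For the implication $\EB^p(\calS) \Rightarrow \Loj^q(\calS)$ I would simply combine \eqref{eq:keyconvex-ebw} with the error bound. On $[\min f < f < r]$ the error bound rearranges to $\dist(x,\calS) \leq (f(x)/\gamma)^{1/p}$, and substituting into \eqref{eq:keyconvex-ebw} yields $f(x) \leq \norm{\partial^0 f(x)}\,(f(x)/\gamma)^{1/p}$, that is $\norm{\partial^0 f(x)} \geq \gamma^{1/p}\, f(x)^{\,1 - 1/p}$. Since $p = \tfrac{1}{1-q}$ forces $1 - 1/p = q$, this is exactly the {\L}ojasiewicz inequality with $\mu = \gamma^{1/p}$. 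This direction is immediate and needs no integration.

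For the converse $\Loj^q(\calS) \Rightarrow \EB^p(\calS)$, the pointwise inequalities only bound $\dist(x,\calS)$ from below, so I would integrate along the subgradient flow to produce the required upper bound. Fix $x_0$ with $0 < f(x_0) < r$ and let $x(\cdot)$ solve $\dot x \in -\partial f(x)$, $x(0)=x_0$ (well-posed by the maximal monotone operator theory of Br\'ezis \cite{brezis}), so that $\dot x(t) = -\partial^0 f(x(t))$ and $\tfrac{d}{dt} f(x(t)) = -\norm{\partial^0 f(x(t))}^2$ for a.e.\ $t$. Since $f$ is nonincreasing along the flow, $f(x(t)) \leq f(x_0) < r$, hence whenever $f(x(t))>0$ the point lies in $[\min f < f < r]$ where {\L}ojasiewicz applies. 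Introducing the desingularizing function $\psi(s) \eqdef \frac{s^{1-q}}{\mu(1-q)}$ and using $\norm{\partial^0 f}\geq \mu f^q$, one checks
\begin{equation}\label{eq:deslength-ebw}
\frac{d}{dt}\psi(f(x(t))) = \frac{f(x(t))^{-q}}{\mu}\,\frac{d}{dt} f(x(t)) \leq -\norm{\partial^0 f(x(t))} = -\norm{\dot x(t)}.
\end{equation}
Integrating \eqref{eq:deslength-ebw} bounds the total length of the curve by $\psi(f(x_0))$; the curve therefore has finite length, converges strongly to some $x_\infty \in \calS$, and $\dist(x_0,\calS) \leq \norm{x_0 - x_\infty} \leq \psi(f(x_0)) = \frac{f(x_0)^{1-q}}{\mu(1-q)}$. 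Rearranging and using $p=\frac{1}{1-q}$ gives $f(x_0) \geq (\mu(1-q))^p \dist(x_0,\calS)^p$, i.e.\ $\EB^p(\calS)$ with $\gamma = (\mu(1-q))^p$.

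The main obstacle is this converse direction, and specifically justifying the finite-length argument in the nonsmooth convex setting: that the subgradient curve exists, obeys the a.e.\ energy identity $\tfrac{d}{dt} f(x(t)) = -\norm{\partial^0 f(x(t))}^2$, remains in the region where the {\L}ojasiewicz inequality is valid, and converges \emph{strongly} to a minimizer (this last point being precisely what finite length buys us, upgrading the generic weak convergence). The boundary exponent $q=0$ (so $p=1$) is covered by the same computation with $\psi(s) = s/\mu$, which keeps \eqref{eq:deslength-ebw} valid; and if one insists on the same threshold $r$ on both sides, a slight shrinking of $r$ guarantees that a flow launched strictly below $r$ never exits the validity region, the remaining boundary values following by continuity of both sides of \eqref{eq:errbnd} in $x$.
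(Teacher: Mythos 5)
Your proposal is correct, but note that the paper itself offers no proof of Proposition~\ref{ebw}: it is stated as a known equivalence (it is essentially the result of Bolte, Nguyen, Peypouquet and Suter on error bounds versus the Kurdyka--{\L}ojasiewicz property for convex functions), so there is nothing in the text to compare against line by line. Your argument is the standard one from that literature and both directions check out. The easy direction ($\EB^p \Rightarrow \Loj^q$) via the subgradient inequality at $\proj_{\calS}(x)$ and the exponent bookkeeping $1-\tfrac{1}{p}=q$ is exactly right (and at points where $\partial f(x)=\emptyset$ the {\L}ojasiewicz inequality is vacuous, so the key bound is only needed where it makes sense). The converse via the finite-length/desingularization argument along the subgradient flow is also sound: Br\'ezis' theory gives the lazy selection $\dot x(t)=-\partial^0 f(x(t))$ and the a.e.\ energy identity, $f$ is nonincreasing along the flow so the trajectory stays in $[\min f<f<r]$ as long as $f(x(t))>\min f$, and integrating $\tfrac{d}{dt}\psi(f(x(t)))\leq -\norm{\dot x(t)}$ gives the length bound. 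The one assertion you state without justification is that the strong limit $x_\infty$ belongs to $\calS$; this needs a line, e.g.\ if $f(x(t))\downarrow \ell>\min f$ then {\L}ojasiewicz forces $\norm{\dot x(t)}\geq \mu(\ell-\min f)^q>0$ for a.e.\ $t$, making $\psi(f(x(t)))$ decrease linearly to $-\infty$, a contradiction; hence $f(x(t))\to\min f$ and lower semicontinuity plus strong convergence puts $x_\infty$ in $\calS$ (alternatively, invoke Bruck's weak convergence to a minimizer and identify it with the strong limit). Your closing remark about shrinking $r$ is unnecessary, since $f$ decreases along the flow and a trajectory launched at $f(x_0)<r$ never leaves the strip. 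With that one line added, the proof is complete and is a perfectly adequate self-contained justification of the proposition the paper leaves unproved.
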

\begin{proof}
\tcb{This is a consequence of \cite[Theorem~5]{bolte}.}
\end{proof}


\subsubsection{Quantitative stability of variational systems}

Our first objective is to evaluate the rate of convergence towards zero of $\left( \|x^{\star} \|^2 - \|x_{\varepsilon} \|^2 \right)$ as $\varepsilon \to 0$.
Using the differentiability properties of the viscosity curve is not a good idea, because the viscosity curve can be of infinite length in the case of a general differentiable convex function, see \cite{torralba}. To overcome this difficulty, we assume that $F=f+g$ satisfies the {\L}ojasiewicz property (see \eqref{li}). This basic property has its roots in algebraic geometry, and it essentially describes a relationship between the objective value and its gradient (or subgradient). Once this is assumed, we will need tools from variational analysis to conclude.

We start by recalling the notion of bounded Hausdorff distance for functions introduced in \cite{AW1} to study stability of minimization problems. All the results of this section until Theorem~\ref{practical} do not need separability of $\H$. 

For a set $C \subset \H \times \R$ and $\rho \geq 0$, we denote
$
C_\rho \eqdef C \cap \rho \Ball ,
$
where $\Ball$ is the unit ball in the box norm on $\H \times \R$. 
For two sets $C, D \subset \H \times \R$, the excess function of $C$ on $D$ is defined as
\[
e(C, D) \eqdef \sup_{x \in C} \dist(x, D) .
\]
For any $\rho \geq 0$, the $\rho$-Hausdorff distance between $C$ and $D$ is defined as
\[
\haus_\rho(C, D) \eqdef \max(e(C_\rho,D), e(D_\rho,C)).
\]
For $\rho=+\infty$, we recover the Hausdorff distance. A metrizable topology is naturally attached to the $\rho$-Hausdorff distance. When $\H$ is finite dimensional, the convergence with respect to the $\rho$-Hausdorff distances is nothing but the classical Painlev\'e-Kuratowski set-convergence. 

\begin{definition}\label{def:rhohausfun}
For $\rho \geq 0$, the $\rho$-Hausdorff (epi-)distance between two functions $f, g: \H \to \Rinf$ is
\[
\haus_\rho(f, g) \eqdef \haus_\rho(\epi f, \epi g) .
\]
\end{definition}
This device was extended in \cite{AMR1} to set-valued operators by identifying them with their graphs.
\begin{definition}\label{def:rhohausop}
For $\rho \geq 0$, the $\rho$-Hausdorff distance between two operators $A, B: \H \rightrightarrows \H$ is
\[
\haus_\rho(A, B) \eqdef \haus_\rho(\gra A, \gra B),
\]
where the unit ball is that of $\H \times \H$ equipped with the box norm.
\end{definition}

We recall the following two results that have been obtained in \cite{AMR1, AW1} and that will be important to prove our quantitative stability result. \tcb{It is a particular case of \cite[Proposition~1.2]{AMR1} with $\rho=0,\lambda=1$}.
\begin{proposition}\label{resolhaus}
    Let $A,B:\H \rightrightarrows \H$ be two maximal monotone operators, \tcb{then 
    \[
    \Vert J_A(0)- J_B(0)\Vert\leq 3\haus_{\Vert J_A(0)\Vert}(A,B),
    \]
    where $J_A\eqdef (I+A)^{-1},J_B\eqdef (I+B)^{-1}$ are the resolvent of the operators $A$ and $B$, respectively.}
\end{proposition}

The second abstract result is the equivalence of the uniform structure on the class of subdifferentials of convex lsc functions between the bounded Hausdorff distance and the uniform convergence on bounded sets of resolvents. 
\begin{proposition}[\tcb{\cite[Theorem~5.2]{AW1}}].\label{gradfun} Let $f$ and $g \in \Gamma_0(\H)$. To any $\rho > \max\pa{\dist(0, \mathrm{epi}(f)), \dist(0, \mathrm{epi}(g))}$ there correspond some constants $\kappa$ and $\rho_0$ (that depend on $\rho$) such that
\[
\haus_\rho (\partial f, \partial g) \leq  \kappa\pa{\haus_{\rho_0}(f, g)}^{\frac{1}{2}}.
\]
\end{proposition}

The following proposition is new and is a consequence of the previous two results. Since this is not obvious, we are going to present the whole proof.
\begin{proposition}\label{ratetikhonov}
Let $f\in\Gamma_0(\H)$ be a function such that $\calS \eqdef \argmin_{\H}(f)\neq\emptyset$, and that $f\in\EB^p(\calS)$. Let also $x^{\star}=\proj_{\calS}(0)$ and for $\varepsilon>0$, let $x_{\varepsilon}$ be the unique minimizer of $f_{\varepsilon}(x)=f(x)+\frac{\varepsilon}{2}\Vert x\Vert^2$. Then there exists $C_0,\varepsilon^{\star}>0$ such that 
    \begin{equation}\label{eq:ratetikhonov}
    \Vert x_{\varepsilon}-x^{\star}\Vert\leq C_0\varepsilon^{\frac{1}{2p}}, \quad\forall \varepsilon\in ]0,\varepsilon^{\star}].
    \end{equation}
    Consequently, there exists $C>0$ such that 
    \begin{equation}\label{eq:normratetikhonov}
    \Vert x^{\star}\Vert^2-\Vert x_{\varepsilon}\Vert^2\leq C\varepsilon^{\frac{1}{2p}}, \quad\forall \varepsilon\in ]0,\varepsilon^{\star}].
    \end{equation}
    \end{proposition}

\begin{proof}
\tcb{Let $\varphi_{\varepsilon}    \eqdef  \frac{1}{\varepsilon} \left( f- \min f    \right)$. By optimality of $x_{\varepsilon}$, we have
\[
x_{\varepsilon} = \left( I + \partial \varphi_{\varepsilon}    \right)^{-1} (0) {=J_{\partial \varphi_{\varepsilon}}(0)} .
\]
}
We have that $\varphi_{\varepsilon}$ increases to \tcb{$\iota_{\calS}$} as $\varepsilon$ decreases to zero, and 
\[
x^{\star}=\proj_{\calS}(0) =  \left( I + \partial\iota_{\calS} \right)^{-1} (0)\tcb{=J_{\partial\iota_{\calS}}(0)} ,
\]
where \tcb{$\iota_{\calS}:\H\rightarrow \{0,+\infty\}$ is the indicator function of $\calS$, that takes $0$ on $\calS$ and $+\infty$ otherwise}. Therefore
\[
\|x_{\varepsilon} -  x^{\star} \|=  \|\left( I + \partial \varphi_{\varepsilon}    \right)^{-1} (0) -  \left( I + \partial\iota_{\calS} \right)^{-1} (0) \|\tcb{=\Vert J_{\partial \varphi_{\varepsilon}}(0)-J_{\partial\iota_{\calS}}(0)\Vert}.
\]
\tcb{Applying Proposition~\ref{resolhaus}} with $A=\partial \varphi_{\varepsilon}$, and $B=\partial \iota_{\calS}$, we have that 
\[
\|x_{\varepsilon} -  x^{\star} \|\leq 3\haus_{\rho}(\partial \varphi_{\varepsilon},\partial \iota_{\calS}),
\]
for $\rho>\Vert x^{\star}\Vert$. Now, since $\max\pa{\dist(0,\mathrm{epi}(\varphi_{\varepsilon})),\dist(0,\mathrm{epi}(\iota_{\calS}))}\leq \Vert x^{\star}\Vert$, we fix $\rho \geq \norm{x^\star}$, and Proposition~\ref{gradfun} entails that there exists constants $\kappa,\rho_0>0$ (depending on $\rho$) such that 
\begin{equation}\label{eq:xepstohausfun}
\|x_{\varepsilon} -  x^{\star} \|\leq 3\kappa\pa{\haus_{\rho_0}(\varphi_{\varepsilon},\iota_{\calS})}^{\frac{1}{2}}.
\end{equation}

To complete our proof we just need to bound the right hand side of the last inequality.
Observe first that since $\iota_{\calS} \geq \varphi_{\varepsilon}$ we just need to compute $e\pa{(\epi \varphi_{\varepsilon})_{\rho_0},\epi \iota_{\calS}} = e\pa{(\epi \varphi_{\varepsilon})_{\rho_0},\calS \times \R_+}$. 
It then follows from Definition~\ref{def:rhohausfun} that
\[
\haus_{\rho_0} (\varphi_{\varepsilon}, \iota_{\calS})=\max_{(x_1,r_1)\in\mathrm{epi}(\varphi_{\varepsilon})\cap \rho_0\mathbb{B}}\min_{(x_2,r_2)\in \calS \times \R_+)} \max\pa{\norm{x_1-x_2},|r_1-r_2|} ,
\]
where $\mathbb{B}$ is the unit ball of the max norm on $\H \times \R_+$. Besides, the inner minimization problem is bounded above by taking $r_2 = r_1$. Hence,  
\begin{align*}
\haus_{\rho_0} (\varphi_{\varepsilon}, \iota_{\calS}) 
&\leq \max_{(x_1,r_1)\in\mathrm{epi}(\varphi_{\varepsilon})\cap \rho_0\mathbb{B}}\dist(x_1,\calS) = \max_{x_1 \in [\min f \leq f \leq \varepsilon r_1 + \min f], \norm{x_1} \leq \rho_0, r_1 \leq \rho_0}\dist(x_1,\calS) \\
& \leq \max_{x \in [\min f \leq f \leq \varepsilon \rho_0 + \min f], \norm{x} \leq \rho_0}\dist(x,\calS) .
\end{align*}
We will now invoke the assumption that $f \in \EB^p(\calS)$. By the latter, there exists $\gamma>0,r>\min f$ such that \eqref{eq:errbnd} holds. Now choose $\varepsilon_0\eqdef  \frac{r-\min f}{\rho_0}>0$. We then have for any $\varepsilon\in ]0,\varepsilon_0]$ that
\begin{equation}\label{eq:hausfunbnd}
\haus_{\rho_0} (\varphi_{\varepsilon}, \iota_{\calS}) 
\leq \max_{x \in [0 \leq f - \min f \leq \varepsilon \rho_0]} \dist(x,\calS) \leq \max_{x \in [0 \leq f - \min f \leq \varepsilon \rho_0]} \pa{\frac{f(x) - \min f}{\gamma}}^{\frac{1}{p}} \leq \left(\frac{\rho_0}{\gamma}\right)^{\frac{1}{p}}\varepsilon^{\frac{1}{p}},
\end{equation}
where we have used that $\rho_0\varepsilon \leq \rho_0\varepsilon_0 \leq r - \min f$ so that \eqref{eq:errbnd} applies. Combining \eqref{eq:xepstohausfun} and \eqref{eq:hausfunbnd} gives \eqref{eq:ratetikhonov} where $C_0 = 3\kappa\left(\frac{\rho_0}{\gamma}\right)^{\frac{1}{2p}}$.

On the other hand, from Theorem~\ref{thm:hierarmin}(i) and the triangle inequality, we have
\[
\|x^{\star} \|^2 - \|x_{\varepsilon} \|^2  \leq 2\Vert x^{\star}\Vert \|x_{\varepsilon } -  x^{\star} \|, \quad \forall \varepsilon \geq 0 .
\]
Taking $\varepsilon^{\star}=\varepsilon_0$ and $C=2C_0\Vert x^{\star}\Vert$ and using \eqref{eq:ratetikhonov}, we get \eqref{eq:normratetikhonov}.

\end{proof}

\tcb{The previous proposition was the key to deriving a proper tuning of the parameter $\varepsilon(t)$ that satisfies all the conditions presented in Theorem~\ref{converge20}. In the following, we make this precise, recalling that the setting of Theorem~\ref{converge20} happened in the separable real Hilbert space $\H$, and that the previous proposition remains valid without separability.}
\begin{theorem}\label{practical}
Consider the setting of Theorem \ref{converge20} and suppose that  $F=f+g \in \EB^p(\calS_F)$. Then taking the Tikhonov parameter
 $\varepsilon(t) = \frac{1}{t^r}$ with
\[
1 \geq r > \frac{2p}{2p+1},
\]
then the three conditions \ref{t1}, \ref{t2}, and \ref{t3} of Theorem \ref{converge20}   are satisfied simultaneously.
In particular, the solution $X\in S_{\H}^{\nu}[t_0]$ of \eqref{CSGD} is unique and we get almost sure (strong) convergence of $X(t)$ to the minimal norm solution $x^{\star}=\proj_{\calS_F}(0)$.
\end{theorem}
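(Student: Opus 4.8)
The plan is to verify that the three conditions \ref{t1}, \ref{t2}, and \ref{t3} of Theorem~\ref{converge20} hold for the explicit choice $\varepsilon(t) = t^{-r}$ under the stated range of $r$, after which the conclusion follows at once by invoking Theorem~\ref{converge20}. Since $r > \frac{2p}{2p+1} > 0$, condition \ref{t1} is immediate, as $t^{-r} \to 0$ when $t \to +\infty$. Condition \ref{t2} amounts to the divergence of $\int_{t_0}^{+\infty} t^{-r}\,dt$, which holds precisely because $r \leq 1$ (logarithmic divergence in the borderline case $r = 1$, polynomial divergence for $r < 1$).

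The crux of the argument is condition \ref{t3}, and this is where the error-bound hypothesis $F \in \EB^p(\calS_F)$ enters, through Proposition~\ref{ratetikhonov}. That proposition furnishes constants $C > 0$ and $\varepsilon^{\star} > 0$ such that
$$
\|x^{\star}\|^2 - \|x_{\varepsilon}\|^2 \leq C\varepsilon^{\frac{1}{2p}}, \quad \forall \varepsilon \in\, ]0,\varepsilon^{\star}].
$$
Because $\varepsilon(t) = t^{-r} \to 0$, there is some $t_1 \geq t_0$ beyond which $\varepsilon(t) \leq \varepsilon^{\star}$, so for $t \geq t_1$ one has
$$
\varepsilon(t)\pa{\|x^{\star}\|^2 - \|x_{\varepsilon(t)}\|^2} \leq C\,\varepsilon(t)^{1 + \frac{1}{2p}} = C\, t^{-r\frac{2p+1}{2p}}.
$$
The integral $\int_{t_1}^{+\infty} t^{-r\frac{2p+1}{2p}}\,dt$ is finite exactly when $r\frac{2p+1}{2p} > 1$, that is, when $r > \frac{2p}{2p+1}$, which is the assumed lower bound on $r$; the contribution on the compact interval $[t_0,t_1]$ is finite since the integrand is continuous there. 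Hence \ref{t3} holds.

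With \ref{t1}, \ref{t2}, and \ref{t3} simultaneously verified, and $\sigma_{\infty} \in \Lp^2(\R_+)$ by assumption, Theorem~\ref{converge20} applies and yields existence and uniqueness of the solution $X \in S_{\H}^{\nu}[t_0]$ of \eqref{CSGD} together with the almost sure strong convergence $\slim_{t\to+\infty} X(t) = x^{\star} = \proj_{\calS_F}(0)$. I do not anticipate any obstacle beyond the bookkeeping of the exponent in \ref{t3}: the threshold $\frac{2p}{2p+1}$ is dictated precisely by balancing the divergence required in \ref{t2} against the integrability required in \ref{t3}, and Proposition~\ref{ratetikhonov} is the key ingredient that converts the error bound into the quantitative decay rate $\varepsilon^{\frac{1}{2p}}$ of the viscosity-curve gap $\|x^{\star}\|^2-\|x_{\varepsilon}\|^2$.
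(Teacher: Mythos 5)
Your proposal is correct and follows essentially the same route as the paper's proof: conditions \ref{t1} and \ref{t2} are immediate, and \ref{t3} is obtained by invoking Proposition~\ref{ratetikhonov} to get the decay $\|x^{\star}\|^2-\|x_{\varepsilon}\|^2\leq C\varepsilon^{\frac{1}{2p}}$, splitting the integral at the time where $\varepsilon(t)$ enters $]0,\varepsilon^{\star}]$, and checking that $r+\frac{r}{2p}>1$ is exactly the stated threshold $r>\frac{2p}{2p+1}$. The only cosmetic difference is that the paper bounds the compact-interval piece by the explicit constant $T^{\star}t_0^{-r}\|x^{\star}\|^2$ (using $\|x_{\varepsilon}\|\leq\|x^{\star}\|$) rather than appealing to continuity, which changes nothing.
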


\begin{proof}
It is direct to check \ref{t1} and \ref{t2}. In order to check \ref{t3}, let $\varepsilon^{\star}>0$ from Proposition \ref{ratetikhonov} and $T^{\star}=\max\pa{t_0,\left(\frac{1}{\varepsilon^{\star}}\right)^{\frac{1}{r}}}$, then we have 
\[
\|x^{\star} \|^2 - \|x_{\varepsilon(t)} \|^2  \leq C \frac{1}{t^\frac{r}{2p}}, \quad\forall t\geq T^{\star}.
\]
Therefore, 
\[
\int_{t_0}^{+\infty}\frac{\|x^{\star} \|^2 - \|x_{\varepsilon(t)} \|^2}{t^r}dt=\underbrace{\int_{t_0}^{T^{\star}}\frac{\|x^{\star} \|^2 - \|x_{\varepsilon(t)} \|^2}{t^r}dt}_{I_1}+\underbrace{\int_{T^{\star}}^{+\infty}\frac{\|x^{\star} \|^2 - \|x_{\varepsilon(t)} \|^2}{t^r}dt}_{I_2}.
\]
Is clear that $I_1$ is bounded (by $T^{\star}t_0^{-r}\Vert x^{\star}\Vert^2$ for instance). Hence \ref{t3} holds under the condition that
\[
\int_{T^{\star}}^{+\infty} \frac{1}{t^r}  \frac{C}{t^\frac{r}{2p}}dt  <+\infty ,
\]
which is true when $ r +  \frac{r}{2p} >1$, whence we deduce our condition $1 \geq r > \frac{2p}{2p+1}$.
\end{proof}

\subsection{Convergence rates of the objective in the smooth case}

\tcb{In this subsection, we turn our attention to the smooth case (\ie\ $g\equiv 0$) and derive explicit global convergence rates in expectation. Recall that in the previous subsections, we established almost sure strong convergence with Tikhonov regularization, but did not obtain any convergence rate. To fill this gap, at least in the smooth setting, we first revisit the result of \cite{ACR} which provides rates in the deterministic case. We then adapt their proof strategy--together with the stochastic analysis tools developed earlier-- to control the additional variance term and obtain the decay of the objective value and distance to the minimal norm solution in expectation. 
}

\smallskip

	\begin{theorem}\cite[Theorem 5]{ACR}\label{thm:model-a}
	Take $\varepsilon (t)=\displaystyle\frac{1}{t^{r} } $ and  $0<r< 1$. Let us consider \eqref{CSGDT1} in the case where $g\equiv 0$, \ie,
	\begin{equation}\label{eqr1}
	\dot{x}(t) + \nabla f\left(x(t) \right)+ \frac{1}{t^r} x(t)=0.
	\end{equation}	
 
	Let $x : [t_0, +\infty[ \to \H$ be a solution trajectory of \eqref{CSGDT1}. For $\varepsilon>0$ define $f_{\varepsilon}(x)\eqdef f(x)+\frac{\varepsilon}{2}\Vert x\Vert^2$, let $x_{\varepsilon}$ be the unique minimizer of $f_{\varepsilon}$, and consider the Lyapunov function \[E(t)\eqdef f_{\varepsilon(t)}(x(t))-f_{\varepsilon(t)}(x_{\varepsilon(t)})+\frac{\varepsilon(t)}{2}\Vert x(t)-x_{\varepsilon(t)}\Vert^2.\]
\smallskip
Then,  we have : 	
	\begin{renumerate}
	\item   $E(t) =  \mathcal O \left( \displaystyle\frac{1}{t}   \right) \mbox{ as } \; t \to +\infty;\hspace{8cm}\label{Lyap-basic2}$\\
	\item
	$f(x(t))-\min (f)= \mathcal O \left( \displaystyle\frac{1}{t^{r} }   \right) \mbox{ as } \; t \to +\infty;$\label{contr:fx(t)2bb}\\
	\item $\|x(t) -x_{\varepsilon(t)}\|^2=\mathcal{O}\left(\dfrac{1}{ t^{1-r}}\right) \mbox{ as } \; t \to +\infty.$ \label{contr:x(t)2b}
 \end{renumerate}
\end{theorem}

\tcb{In light of Proposition~\ref{ratetikhonov}, we can now characterize the rate at which the trajectory solution of \eqref{CSGDT1} converges to the minimum norm solution. To the best of our knowledge, this convergence rate estimate is new.}
\tcb{
\begin{corollary}\label{cor:model-a}
Consider the setting of Theorem \ref{thm:model-a}, then we have strong convergence of $x(t)$ to the minimum norm solution $x^{\star}=\proj_{\calS}(0)$. Moreover, if $f\in\EB^p(\calS)$, then
  \begin{equation}
      \|x(t) -x^{\star}\|^2=\begin{cases}
        \mathcal{O}\left(\dfrac{1}{ t^{\frac{r}{p}}}\right),&\quad \text{ if } r\in \left]0,\frac{p}{p+1}\right[;\\
        \mathcal{O}\left(\dfrac{1}{ t^{1-r}}\right),&\quad \text{ if } r\in \left[\frac{p}{p+1},1\right[
    \end{cases} \mbox{ as } \; t \to +\infty. \label{contr:x(t)2bbb}
    \end{equation}
\end{corollary}
\begin{proof}
    Combine the third item of Theorem~\ref{thm:model-a} and Proposition~\ref{ratetikhonov}.
\end{proof}
\begin{remark}
    We observe that the convergence rate is governed by a piecewise function, attaining its optimum when $r=\frac{p}{p+1},$ in which case we obtain \begin{equation*}
        \|x(t) -x^{\star}\|^2=\mathcal{O}\left(\frac{1}{t^{\frac{1}{p+1}}}\right) ,\mbox{ as } \; t \to +\infty.
    \end{equation*} 
    We also remark that this is strictly slower than the convergence rate of (deterministic) gradient flow when $f$ satisfies a H\"olderian error bound, in which case we have 
    \begin{equation*}
        \dist^2(x(t),\calS)=\mathcal{O}\left(\frac{1}{t^{\frac{2}{p}}}\right) ,\mbox{ as } \; t \to +\infty.
    \end{equation*} 
    This reflects the trade-off for ensuring strong convergence to the minimal norm solution with the Tikhonov regularization term $\varepsilon(t)=\frac{1}{t^{\frac{p}{p+1}}}, p\geq 1$.
\end{remark}
}

\tcb{We are ready now to state the main theorem of this subsection, which establishes global convergence rates in expectation for the trajectories of \eqref{CSGD} when $g\equiv0$. Moreover, this result recovers the deterministic convergence rates of Theorem~\ref{thm:model-a} and Corollary~\ref{cor:model-a}. Indeed, by setting $\sigma_{\infty}^2=0$, the stochastic term vanishes and we retrieve exactly the bounds of the deterministic case.
\begin{theorem}\label{importante1}
    Consider \eqref{CSGD} with $g\equiv 0$, $\varepsilon(t)=\frac{1}{t^r}$ where $0<r<1$, \ie 
    \begin{equation}\label{g0}\tag{$\mathrm{SDE-TA}$}
	\begin{cases}
	\begin{aligned}
	dX(t)&=-\nabla f(X(t))dt -\frac{1}{t^r}X(t)dt +\sigma(t,X(t))dW(t), \quad t\geq t_0;\\
	X(t_0)&=X_0.
	\end{aligned}
	\end{cases}
	\end{equation}
	where the initial data $X_0\in\Lp^{\nu}(\Omega;\H)$ for $\nu\geq 2$. 
	Assume that $f\in \Gamma_0(\H)\cap C_L^2(\H)$ with $\calS\eqdef\argmin(f) \neq \emptyset$, and $f\in \EB^p(\calS)$. Suppose also that $\sigma$ satisfies \eqref{H}, and that $\sigma_{\infty}\in\Lp^2([t_0,+\infty[)$ and is non-increasing. 
For $\varepsilon>0$, let $f_{\varepsilon}(x)\eqdef f(x)+\frac{\varepsilon}{2}\Vert x\Vert^2$, $x_{\varepsilon}$ be the unique minimizer of $f_{\varepsilon}$. Let $x^{\star}\eqdef \proj_{\calS}(0)$. Consider the energy function 
\[
E(t,x)\eqdef f_{\varepsilon(t)}(x)-f_{\varepsilon(t)}(x_{\varepsilon(t)})+\frac{\varepsilon(t)}{2}\Vert x-x_{\varepsilon(t)}\Vert^2,
\] 
and for $t_1>t_0$, 
\begin{equation}\label{eqdefr}
    R(t)\eqdef e^{-\frac{t^{1-r}}{1-r}}\int_{t_1}^t e^{\frac{s^{1-r}}{1-r}}\sigma_{\infty}^2(s)ds.
\end{equation}
Then, the solution trajectory $X\in S_{\H}^{\nu}[t_0]$ is unique, and the following holds:  	
\begin{renumerate}
    \item \label{crr} $R$ converges to $0$ at the rate, 
    \[
    R(t)=\mathcal{O}\left(\mathrm{exp}(-t^r(1-2^{-r}))+t^r\sigma_{\infty}^2\left(\frac{t_1+t}{2}\right)\right).
    \] 
    If, moreover, $\sigma_{\infty}^2(t)=\mathcal{O}(t^{-\alpha})$ for $\alpha>1$, then $R(t)=\mathcal{O}(t^{r-\alpha})$.
	\item \label{Lyap-basic3}
	$\EE[E(t,X(t))] =  \mathcal O \left( \displaystyle\frac{1}{t} + R(t)  \right)$ 
	\item\label{contr:fx(t)3bb}
	$\EE[f(X(t))-\min (f)]= \mathcal O \left( \displaystyle\frac{1}{t^{r} }+R(t) \right)$. 
	In addition, if $\sigma_{\infty}^2(t)=\mathcal{O}(t^{-\alpha})$ for $\alpha> 1$, then 
	\[
	\EE[f(X(t))-\min (f)]=
	\begin{cases}
	    \mathcal O \left( \displaystyle\frac{1}{t^{\alpha-r}}   \right),\quad& \text{if } \alpha\in ]1, 2r[;\\
        \mathcal O \left( \displaystyle\frac{1}{t^{r}}   \right),\quad& \text{if } \alpha\geq 2r;
	\end{cases} 
	\]
	\item \label{n5} 
	$\EE[\|X(t) -x_{\varepsilon(t)}\|^2]=\mathcal{O}\left(\dfrac{1}{ t^{1-r}}+t^{r}R(t)\right),$ which goes to $0$ as $t\rightarrow +\infty$ if $r\in ]0,\frac{1}{2}]$.
	\label{contr:x(t)3b} 
	If, moreover, $\sigma_{\infty}^2(t)=\mathcal{O}(t^{-\alpha})$ for $\alpha>\max\{2r,1\}$, then 
	\[
	\EE[\|X(t) -x_{\varepsilon(t)}\|^2]=\begin{cases}
	    \mathcal{O}\left(\dfrac{1}{ t^{\alpha-2r}}\right),&\quad \text{ if } \alpha\in ]\max\{2r,1\},r+1[;\\
     \mathcal{O}\left(\dfrac{1}{ t^{1-r}}\right),&\quad \text{ if } \alpha\geq r+1.
	\end{cases} .
	\]
    \item \label{n6} $\EE[\|X(t) -x^{\star}\|^2]=\mathcal{O}\left(\dfrac{1}{ t^{1-r}}+\dfrac{1}{ t^{\frac{r}{p}}}+t^{r}R(t)\right),$ which goes to $0$ as $t\rightarrow +\infty$ if $r\in ]0,\frac{1}{2}]$.  In addition, if $\sigma_{\infty}^2(t)=\mathcal{O}(t^{-\alpha})$ for $\alpha>\max\{2r,1\}$, then 
    \[
    \EE[\|X(t) -x^{\star}\|^2]=\mathcal{O}\left(\dfrac{1}{ t^{1-r}}+\dfrac{1}{ t^{\frac{r}{p}}}+\dfrac{1}{t^{\alpha-2r}}\right) .
    \]
    In particular,
    \[
    \EE[\|X(t) -x^{\star}\|^2]=\begin{cases}
        \mathcal{O}\left(\dfrac{1}{ t^{1-r}}\right),&\quad \text{ if } r\in \left]\frac{p}{p+1} , 1\right[,\alpha>r+1;\\
        \mathcal{O}\left(\dfrac{1}{ t^{\frac{r}{p}}}\right),&\quad \text{ if } r\in \left]0,\frac{p}{p+1}\right[,\alpha>\max\{1,\frac{r(2p+1)}{p}\};\\
        \mathcal{O}\left(\dfrac{1}{ t^{\alpha-2r}}\right),&\quad \text{ if } r\in \left]\frac{p}{2p+1} , 1\right[,\alpha\in \left(\max\{2r,1\},\min\{r+1,\frac{r(2p+1)}{p}\}\right).
    \end{cases}
    \]
	\end{renumerate}
\end{theorem}
}
\begin{proof}
The existence and uniqueness of a solution was already stated in Theorem \ref{converge20}.\smallskip

The first item is a direct consequence of Lemma \ref{liiim0}, for the second one we recall that $\sigma_{\infty}\in\Lp^2([t_0,+\infty[)$ and is non-increasing, and we proceed as follows:
\begin{align*}
R(t)&=e^{-\frac{t^{1-r}}{1-r}}\int_{t_1}^{\frac{t_1+t}{2}} e^{\frac{s^{1-r}}{1-r}}\sigma_{\infty}^2(s)ds+e^{-\frac{t^{1-r}}{1-r}}\int_{\frac{t_1+t}{2}}^{t} e^{\frac{s^{1-r}}{1-r}}\sigma_{\infty}^2(s)ds \\
&\leq e^{\left(\frac{t_0}{2}\right)^r}e^{-t^r\left(1-2^{-r}\right)}\int_{t_1}^{+\infty}\sigma_{\infty}^2(s)ds+\sigma_{\infty}^2\left(\frac{t_1+t}{2}\right)D_{\frac{1}{1-r},1-r}\left(t\right),
\end{align*}
where \[D_{a,b}\left(t\right)=e^{-at^b}\int_0^{t}e^{as^b}ds.\] As a corollary of an upper bound of the Dawson integral shown in \cite[Section~7.8]{dawson}, we have that \[D_{a,b}(t)\leq \frac{2}{ab}t^{1-b},\quad 0<b\leq 2, a>0, t>0,\] thus we obtain 
\[
R(t)=\mathcal{O}\left(\mathrm{exp}(-t^r(1-2^{-r}))+t^r\sigma_{\infty}^2\left(\frac{t_1+t}{2}\right)\right) .
\]
Since $\sigma_{\infty}^2$ is non-increasing and $r< 1$, we have  
\[
0\leq t\sigma_{\infty}^2(t)\leq 2\int_{\frac{t}{2}}^t\sigma_{\infty}^2(u)du,
\]
and the right hand side goes to $0$ as $t\rightarrow +\infty$ since $\sigma_{\infty}\in\Lp^2([t_0,+\infty[)$ by assumption. Thus we obtain that $\lim_{t\rightarrow\infty}t\sigma_{\infty}^2(t)=0$ which proves claim \ref{crr}.

\smallskip

The remainder of the proof follows by applying It\^o's formula to \eqref{g0} with the function
\[
\phi(t,x) \eqdef  \Phi(t)E(t,x) \quad \text{where} \quad \Phi(t)\eqdef \exp\Bigl(\!\int_{t_1}^t s^{-r} ds\Bigr),
\]
and then taking expectation. Following similar computations as in \cite[Theorem~3]{ACR}, we obtain
\begin{align*}
    \mathbb{E}[\phi(t,X(t))]&\leq\mathbb{E}[\phi(t_0,X_0)]-\Vert x^{\star}\Vert^2\int_{t_0}^t \dot{\varepsilon}(s)\Phi(s)ds+\int_{t_0}^t\mathbb{E}\left[\tr[\sigma(s,X(s))\sigma^\star(s,X(s))\nabla^2 \phi(s,X(s))]\right]ds\\
    &\leq \mathbb{E}[\phi(t_0,X_0)]-\Vert x^{\star}\Vert^2\int_{t_0}^t \dot{\varepsilon}(s)\Phi(s)ds+\left(L+2t_0^{-r}\right)\int_{t_0}^t\Phi(s)\sigma_{\infty}^2(s)ds.
\end{align*}
Dividing by $\Phi(t)$, we have equivalently that 
\begin{align*}
    \mathbb{E}[E(t,X(t))]&\leq\frac{\mathbb{E}[\phi(t_0,X_0)]}{\Phi(t)}-\frac{\Vert x^{\star}\Vert^2}{\Phi(t)}\int_{t_0}^t \dot{\varepsilon}(s)\Phi(s)ds+\frac{(L+2t_0^{-r})}{\Phi(t)}\int_{t_0}^t\Phi(s)\sigma_{\infty}^2(s)ds\\
    &\leq \frac{\mathbb{E}[\phi(t_0,X_0)]}{\Phi(t)}+\frac{\Vert x^{\star}\Vert^2}{\rho t}+\frac{(L+2t_0^{-r})}{\Phi(t)}\int_{t_0}^t\Phi(s)\sigma_{\infty}^2(s)ds,
\end{align*}
for $\rho<\frac{1}{r}$ (see the proof of \cite[Theorem 5]{ACR}). By definition $\frac{1}{\Phi(t)}\int_{t_0}^t\Phi(s)\sigma_{\infty}^2(s)ds=R(t)$, and since $\frac{1}{\Phi(t)}$ decays exponentially, we conclude with the claim of item \ref{Lyap-basic3}.
Besides, by \cite[Lemma~3]{ACR}, we have 
\begin{equation*}
    f(x)-\min f \leq E(t,x)+\frac{\varepsilon(t)}{2}\Vert x^{\star}\Vert^2 \quad \text{and} \quad
    \Vert x-x_{\varepsilon(t)}\Vert^2 \leq \frac{E(t,x)}{\varepsilon(t)}.
\end{equation*}
Taking expectation and inserting the bound of \ref{Lyap-basic3}, we obtain claims \ref{contr:fx(t)3bb} and \ref{n5}. 
Finally, for item \ref{n6}, we combine \ref{n5} and Proposition~\ref{ratetikhonov}.
To conclude with the particular convergence rates, we plug in the derived rates for $R(t)$ obtained in \ref{crr} and the rate of $\sigma_{\infty}^2(t)$.

\end{proof}

\begin{remark}
In the finite-dimensional case, \ie, $\H=\R^d$ (not necessarily $\K$), we can weaken the assumption $f\in C_L^2(\H)$ to $f\in C_L^{1,1}(\H)$ thanks to \cite[Proposition 2.2]{mio}.
\end{remark}

\begin{remark}
Comparing Theorem~\ref{importante1} to its deterministic counterpart Theorem~\ref{thm:model-a} (see also Corollary~\ref{cor:model-a}), one has the additional term $R(t)$ that appears in each rate. This necessarily slows down the convergence rate compared to the deterministic setting. But as expected, it is the price to be paid to account for stochastic noise while ensuring convergence.
\end{remark}

\begin{remark}
Our result in Theorem~\ref{converge20} ensures that with the Tikhonov regularization, the solution trajectory strongly converges in almost sure sense to the minimal norm solution, provided that the regularization coefficient $\varepsilon(t)$ is well chosen (verifies \ref{t1}, \ref{t2} and \ref{t3}), and the diffusion term decays fast enough. While it is easy to choose $\varepsilon(t)$ so that \ref{t1}-\ref{t2} hold, fulfilling \ref{t3} required more involved arguments, and for instance that $f$ verifies a H\"olderian error bound. This also allowed to derive the (pointwise) convergence rates in expectation of Theorem~\ref{importante1}. These quantitative estimates reveal that there is a trade-off between the decay of $\varepsilon(t)$ and that of the diffusion term $\sigma_{\infty}(t)$ in order to maintain convergence and have meaningful convergence rates. This is clearly reflected in the form of the function $R(t)$ in Theorem~\ref{importante1}\ref{crr}. For instance, mere square-integrability of $\sigma_{\infty}(t)$ is not sufficient as $\sigma_{\infty}(t)$ must decrease at least as $t^{-\alpha}$, $\alpha > 1$.
\end{remark}

\section{Conclusion, Perspectives} 

The purpose of this work was to study the convergence properties of trajectories of subgradient-like flows under stochastic errors in infinite dimensional separable real Hilbert spaces. The motivation stems from solving non-smooth convex optimization problems with noisy subgradient oracles with vanishing variance. We have shown important properties of these trajectories, such as the almost sure weak (resp. strong) convergence to a minimizer (resp. minimal norm one) without (resp. with) Tikhonov regularization. We have also investigated the convergence rates and highlighted the trade-off between the tuning of the Tikhonov regularization coefficient and the noise variance. This work leads us to important extensions, among which,we mention the following ones:
\begin{itemize}
    \item Extend our results, with and without Tikhonov regularization, to the case of to the case of operators where $\nabla f$ and $\partial g$ are replaced by, respectively, a co-coercive operator $B$ and a maximal monotone operator $A$. 
    \item Investigate the transition to second-order dynamics via time-scaling and averaging, and analyzing its corresponding convergence properties.
    \item Study second-order dynamics with inertia in view of understanding the behavior of accelerated dynamics in the presence of stochastic errors. 
\end{itemize}
Some of these aspects are already the subject of ongoing research work.


\appendix

\section{Auxiliary results}\label{aux}
\subsection{Deterministic results}
\begin{lemma}\label{lim0}
Let $t_0\geq 0$ and $a,b:[t_0,+\infty[\rightarrow \R_+$. If $\lim_{t\rightarrow \infty} a(t)$ exists, $b\notin\Lp^1([t_0,+\infty[)$ and $\int_{t_0}^\infty a(s)b(s)ds<+\infty$, then $\lim_{t\rightarrow \infty} a(t)=0.$
\end{lemma}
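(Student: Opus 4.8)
The plan is to argue by contradiction. Since $a$ takes values in $\R_+$ and $\lim_{t\to\infty}a(t)$ exists by hypothesis, this limit is some $\ell\geq 0$, and the goal is to show $\ell=0$. I would suppose instead that $\ell>0$ and derive a contradiction with the integrability assumption $\int_{t_0}^{\infty}a(s)b(s)\,ds<+\infty$.

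First, applying the definition of the limit with tolerance $\ell/2$, I would fix $T\geq t_0$ such that $a(t)\geq \ell/2$ for all $t\geq T$. On the tail $[T,+\infty[$ both $a$ and $b$ are nonnegative, so the integrand is bounded below by $a(s)b(s)\geq \tfrac{\ell}{2}\,b(s)$ for every $s\geq T$, and since the integrands are nonnegative,
\[
\int_{t_0}^{+\infty} a(s)b(s)\,ds \;\geq\; \int_{T}^{+\infty} a(s)b(s)\,ds \;\geq\; \frac{\ell}{2}\int_{T}^{+\infty} b(s)\,ds .
\]
The next step is to exploit $b\notin \Lp^1([t_0,+\infty[)$: because $b\geq 0$ this is exactly $\int_{t_0}^{+\infty}b(s)\,ds=+\infty$, and since $b$ is (locally) integrable on the bounded interval $[t_0,T]$, subtracting the finite piece $\int_{t_0}^{T}b$ from the divergent total forces $\int_{T}^{+\infty}b(s)\,ds=+\infty$. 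Substituting this into the display above gives $\int_{t_0}^{+\infty}a(s)b(s)\,ds=+\infty$, contradicting the hypothesis. Hence $\ell=0$, i.e. $\lim_{t\to\infty}a(t)=0$.

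The one point I expect to require care — indeed the only genuine obstacle — is the passage $\int_{T}^{+\infty}b=+\infty$. This relies on $b$ being locally integrable, so that the divergence of $\int_{t_0}^{\infty}b$ transfers to every tail; without local integrability the statement can actually fail (mass of $b$ concentrated near $t_0$ with a finite tail would permit a nonzero limit of $a$). This assumption holds automatically in every application of the lemma in this paper, where $b=\varepsilon$ is bounded by construction, so the argument goes through once this observation is recorded.
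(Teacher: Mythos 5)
Your proof is correct, and it is the standard contradiction argument one would expect here; the paper in fact states Lemma~\ref{lim0} without any proof, so there is nothing to diverge from. Your side remark is also well taken: as literally stated the lemma needs $b$ to be locally integrable (equivalently, the tail $\int_T^{+\infty} b$ must still diverge), since otherwise one can concentrate the infinite mass of $b$ near $t_0$ while keeping $\int a\,b$ finite and $\lim a > 0$; this hypothesis is harmless in the paper, where the lemma is only invoked with $b = \varepsilon(\cdot)$ bounded.
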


\begin{lemma}[Comparison Lemma]\label{comparison}
Let $t_0\geq 0$ and $T> t_0$. Assume that $h:[t_0,+\infty[\rightarrow\R_+$ is measurable with $h \in \Lp^1([t_0,T])$ , that $\psi:\R_+\rightarrow\R_+$ is continuous and non-decreasing, $\varphi_0>0$ and the Cauchy problem
\begin{equation*}
\begin{cases}
\varphi'(t)=-\psi(\varphi(t)) + h(t) & \text{for almost all $t\in [t_0,T]$}\\
\varphi(t_0)=\varphi_0
\end{cases}
\end{equation*}
has an absolutely continuous solution $\varphi:[t_0,T]\rightarrow \R_+$. If a bounded from below lower semicontinuous function $\omega:[t_0,T]\rightarrow \R_+$ satisfies 
\[
\omega(t)\leq\omega(s)-\int_s^t \psi(\omega(\tau))d\tau + \int_s^t h(\tau)d\tau
\]
for $t_0\leq s < t \leq T$ and $\omega(t_0)=\varphi_0$, then 
\[
\omega(t)\leq \varphi(t)\quad \text{for $t\in [t_0,T]$}.
\]
\end{lemma}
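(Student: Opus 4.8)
The plan is to prove the inequality by a \emph{last-crossing-time} contradiction argument, exploiting the monotonicity of $\psi$ together with the lower semicontinuity of $\omega$; no Gronwall estimate is needed. First I would introduce the difference $u \eqdef \omega - \varphi$ on $[t_0,T]$. Since $\varphi$ is absolutely continuous (hence continuous) and $\omega$ is lsc, $u$ is lsc, and $u(t_0)=0$. Integrating the Cauchy problem gives the exact identity $\varphi(t)-\varphi(s) = -\int_s^t \psi(\varphi(\tau))d\tau + \int_s^t h(\tau)d\tau$, and subtracting it from the hypothesized integral inequality for $\omega$ yields the clean relation
$$u(t) \le u(s) - \int_s^t \big[\psi(\omega(\tau)) - \psi(\varphi(\tau))\big]d\tau, \qquad t_0 \le s < t \le T.$$

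Before using this relation I would justify that $\psi\circ\omega \in \Lp^1([t_0,T])$, so that the above subtraction is legitimate. Since $\omega \ge 0$ and $\psi \ge 0$, the hypothesis $\omega(t) \le \omega(s) - \int_s^t \psi(\omega) + \int_s^t h$ together with $\omega(t)\ge 0$ forces $\int_s^t \psi(\omega(\tau))d\tau \le \omega(s) + \int_s^t h(\tau)d\tau < +\infty$; combined with $h \in \Lp^1([t_0,T])$ and the measurability of $\psi\circ\omega$ (composition of the Borel-measurable $\omega$ with the monotone $\psi$), this gives integrability on all of $[t_0,T]$. The function $\psi\circ\varphi$ is continuous on the compact interval, hence bounded and integrable as well.

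For the contradiction, suppose $E \eqdef \{t \in [t_0,T] : u(t) > 0\}$ is nonempty and fix $t_2 \in E$. I would set $\tau_0 \eqdef \sup\{t \in [t_0,t_2] : u(t) \le 0\}$. Because $u$ is lsc, the sublevel set $\{u \le 0\}$ is closed, so this supremum is attained, giving $u(\tau_0) \le 0$; since $u(t_0)=0$ the set is nonempty so $\tau_0$ is well defined, while $u(t_2)>0$ forces $\tau_0 < t_2$. By maximality of $\tau_0$, one has $u(\tau) > 0$, i.e. $\omega(\tau) > \varphi(\tau)$, for every $\tau \in (\tau_0, t_2]$. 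The monotonicity of $\psi$ then yields $\psi(\omega(\tau)) - \psi(\varphi(\tau)) \ge 0$ for a.e. $\tau \in (\tau_0,t_2)$, so applying the displayed relation with $s=\tau_0$ and $t=t_2$ gives $u(t_2) \le u(\tau_0) \le 0$, contradicting $u(t_2)>0$. Hence $E=\emptyset$, and $\omega(t) \le \varphi(t)$ for all $t \in [t_0,T]$.

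The main obstacle I anticipate is the low regularity of $\omega$: because it is only lower semicontinuous and not continuous, one cannot talk about a maximum of $u$ or about pointwise derivatives, so the whole argument must be organized through the integral inequality and through the closedness of $\{u \le 0\}$ — which is precisely what lower semicontinuity buys us, guaranteeing that the last-crossing supremum $\tau_0$ is attained with $u(\tau_0)\le 0$. The only other delicate point is the a priori integrability of $\psi\circ\omega$ addressed in the second step, without which the subtraction producing the key relation would be unjustified; once these two points are secured, the monotonicity of $\psi$ does all the remaining work.
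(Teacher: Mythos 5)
Your proof is correct, and there is in fact nothing in the paper to compare it against: Lemma~\ref{comparison} is stated among the auxiliary deterministic results without any proof (it is never invoked in the body of the paper and is carried over from the companion work \cite{mio}), so your argument has to stand on its own merits --- and it does. The two delicate points are both handled properly: (a) the a priori integrability of $\psi\circ\omega$ is correctly extracted from the hypothesis itself, since $\omega\geq 0$ and $\psi\geq 0$ force $\int_{t_0}^{T}\psi(\omega(\tau))\,d\tau\leq \omega(t_0)+\int_{t_0}^{T}h(\tau)\,d\tau<+\infty$, and $\psi\circ\omega$ is Borel measurable; this legitimizes subtracting the integrated Cauchy identity of the absolutely continuous $\varphi$ to get the key relation for $u=\omega-\varphi$; and (b) lower semicontinuity of $u$ makes the sublevel set $\{u\leq 0\}$ closed, so the last-crossing time $\tau_0$ is attained with $u(\tau_0)\leq 0$, maximality gives $u>0$ and hence $\psi(\omega)\geq\psi(\varphi)$ on $(\tau_0,t_2]$, and the relation applied with $s=\tau_0<t=t_2$ produces the contradiction $0<u(t_2)\leq u(\tau_0)\leq 0$. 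It is worth noting that your route is more elementary than the Gronwall-type or approximation arguments by which such comparison lemmas are usually established, and it even uses strictly less than the hypotheses: continuity of $\psi$ and the condition $\varphi_0>0$ play no role (a non-decreasing $\psi$ is already Borel measurable and bounded on the compact range of the continuous $\varphi$, so $\psi\circ\varphi\in\Lp^1([t_0,T])$), and the ODE theory enters only through the assumed existence of the solution $\varphi$.
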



\begin{lemma} \label{existenceof} 
Let $f:\R_+\rightarrow\R$ and $\liminf_{t\rightarrow\infty} f(t)\neq \limsup_{t\rightarrow\infty} f(t)$. Then there exists a constant $\alpha$, satisfying $\liminf_{t\rightarrow\infty} f(t)< \alpha<\limsup_{t\rightarrow\infty} f(t)$, such that for every $\beta>0$, we can define a sequence $(t_k)_{k\in\N}\subset\R$ such that \[f(t_k)>\alpha,\hspace{0.3cm} t_{k+1}>t_k+\beta\hspace{0.3cm} \forall k\in\N.\]
\end{lemma}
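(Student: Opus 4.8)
The plan is to work directly from the definitions of $\liminf$ and $\limsup$. Write $\ell \eqdef \liminf_{t\rightarrow\infty} f(t)$ and $L \eqdef \limsup_{t\rightarrow\infty} f(t)$; these always satisfy $\ell \leq L$, and since they are assumed distinct we have $\ell < L$. First I would fix any constant $\alpha$ with $\ell < \alpha < L$. Such a value exists in all cases: if both endpoints are finite one may take $\alpha = (\ell+L)/2$, while if $L = +\infty$ any finite $\alpha > \ell$ works, and symmetrically if $\ell = -\infty$. This $\alpha$ will be the constant claimed in the statement, and crucially it is chosen once and for all, independently of $\beta$.

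The key step is the observation that the strict inequality $\alpha < L$ forces the superlevel set $S \eqdef \{t \in \R_+ : f(t) > \alpha\}$ to be unbounded above. Indeed, by definition $\limsup_{t\rightarrow\infty} f(t) = \lim_{T\rightarrow\infty} \sup_{t\geq T} f(t) = L$, and since this is a decreasing limit we have $\sup_{t\geq T} f(t) \geq L > \alpha$ for every $T \geq 0$. Hence for each $T$ there exists some $t \geq T$ with $f(t) > \alpha$, and as $T$ is arbitrary the set $S$ cannot be bounded above.

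Given $\beta > 0$, I would then construct the sequence $(t_k)_{k\in\N}$ inductively using this unboundedness. Pick any $t_1 \in S$ (apply the previous step with $T = 0$). Assuming $t_1, \dots, t_k$ have been chosen with $f(t_j) > \alpha$ for $j \leq k$, use that $S$ is unbounded above to select $t_{k+1} \in S$ with $t_{k+1} > t_k + \beta$. This produces a sequence satisfying $f(t_k) > \alpha$ and $t_{k+1} > t_k + \beta$ for all $k \in \N$, which is exactly the assertion. Note that the same $\alpha$ serves simultaneously for every $\beta > 0$, since only the inductive spacing step depends on $\beta$.

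The argument is elementary, and I do not anticipate any genuine obstacle. The only points requiring minor care are the treatment of possibly infinite values of $\ell$ or $L$ when choosing $\alpha$, and the use of the precise characterization of $\limsup$ as a decreasing limit of suprema in order to extract, for arbitrarily large $T$, a time $t \geq T$ at which $f$ strictly exceeds $\alpha$.
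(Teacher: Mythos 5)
Your proof is correct and complete: fixing $\alpha$ strictly between $\liminf_{t\to\infty}f(t)$ and $\limsup_{t\to\infty}f(t)$, using the characterization $\limsup_{t\to\infty}f(t)=\lim_{T\to\infty}\sup_{t\geq T}f(t)$ to show the superlevel set $\{t:f(t)>\alpha\}$ is unbounded, and then extracting a $\beta$-separated sequence inductively is exactly the standard argument, and you correctly note that $\alpha$ is chosen independently of $\beta$. The paper itself gives no proof here --- it only cites \cite[Lemma A.3]{mio} --- so your self-contained elementary argument is, if anything, more informative than what the paper provides.
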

\begin{proof}
    See proof in \cite[Lemma A.3]{mio}.
\end{proof}

\begin{lemma}\label{liiim0}
    Take $t_0>0$, and let $f\in \Lp^1([t_0,+\infty[)$ be continuous. Consider a non-decreasing function $\varphi:[t_0,+\infty[\rightarrow\R_+$ such that $\lim_{t\rightarrow+\infty}\varphi(t)=+\infty$. Then $\lim_{t\rightarrow+\infty}\frac{1}{\varphi(t)}\int_{t_0}^t\varphi(s)f(s)ds=0$.
\end{lemma}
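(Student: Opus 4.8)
The plan is to prove this via an $\varepsilon$-splitting argument, which is the continuous-time analogue of Kronecker's lemma. The crucial structural fact I intend to exploit is that $\varphi$ is non-decreasing, so that $\varphi(s) \leq \varphi(t)$ whenever $s \leq t$, i.e. the weight $\varphi(s)/\varphi(t)$ is always bounded by $1$ on the domain of integration. This lets me control the ``tail'' part of the integral directly by the $\Lp^1$-tail of $f$, without ever needing $\varphi f$ to be integrable (which it generally is not, since $\varphi \to +\infty$).

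First I would fix an arbitrary $\varepsilon > 0$. Since $f \in \Lp^1([t_0,+\infty[)$, there exists $T_\varepsilon > t_0$ such that $\int_{T_\varepsilon}^{+\infty} |f(s)|\,ds < \varepsilon$. For $t > T_\varepsilon$ I would then decompose
\begin{equation*}
\frac{1}{\varphi(t)}\int_{t_0}^t \varphi(s)f(s)\,ds
= \frac{1}{\varphi(t)}\int_{t_0}^{T_\varepsilon} \varphi(s)f(s)\,ds
+ \frac{1}{\varphi(t)}\int_{T_\varepsilon}^{t} \varphi(s)f(s)\,ds.
\end{equation*}

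For the first term, note that $\int_{t_0}^{T_\varepsilon} \varphi(s)f(s)\,ds$ is a fixed finite number: on the compact interval $[t_0,T_\varepsilon]$ the non-decreasing function $\varphi$ is bounded by $\varphi(T_\varepsilon)$, and $f$ is integrable, so the product is integrable with $\big|\int_{t_0}^{T_\varepsilon}\varphi(s)f(s)\,ds\big| \leq \varphi(T_\varepsilon)\int_{t_0}^{T_\varepsilon}|f(s)|\,ds < +\infty$. Since $\varphi(t) \to +\infty$, dividing by $\varphi(t)$ sends this term to $0$. For the second term, I use monotonicity: for $s \in [T_\varepsilon,t]$ we have $\varphi(s) \leq \varphi(t)$, hence
\begin{equation*}
\left|\frac{1}{\varphi(t)}\int_{T_\varepsilon}^{t}\varphi(s)f(s)\,ds\right|
\leq \frac{1}{\varphi(t)}\int_{T_\varepsilon}^{t}\varphi(s)|f(s)|\,ds
\leq \int_{T_\varepsilon}^{t}|f(s)|\,ds
\leq \int_{T_\varepsilon}^{+\infty}|f(s)|\,ds < \varepsilon.
\end{equation*}

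Combining the two estimates yields $\limsup_{t\to+\infty}\big|\frac{1}{\varphi(t)}\int_{t_0}^t\varphi(s)f(s)\,ds\big| \leq \varepsilon$, and since $\varepsilon > 0$ was arbitrary the limit is $0$, as claimed. There is no genuine obstacle here: the only point requiring care is the very one that makes the argument work, namely invoking the monotonicity of $\varphi$ to bound $\varphi(s)/\varphi(t) \leq 1$ on the tail; without it one would have to control $\int_{T_\varepsilon}^t \varphi(s)|f(s)|\,ds$ directly, which need not be comparable to $\varphi(t)$ times a small quantity.
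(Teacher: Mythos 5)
Your proof is correct and complete: the $\varepsilon$-splitting at $T_\varepsilon$, the bound $\varphi(s)/\varphi(t)\leq 1$ on the tail via monotonicity, and the observation that the head is a fixed finite quantity killed by $\varphi(t)\to+\infty$ together give exactly the claimed limit (the only implicit point, that $\varphi(t)>0$ for all large $t$, follows from $\varphi\to+\infty$). The paper itself does not prove this lemma inline but defers to an external reference, and your argument is the standard Kronecker/Toeplitz-type proof one would expect to find there.
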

\begin{proof}
    See proof in \cite[Lemma A.5]{hessianpert}
\end{proof}

\subsection{Stochastic results}
\subsubsection{On stochastic processes}\label{onstochastic}
Let us recall some elements of stochastic analysis. Throughout the paper, $(\Omega,\mathcal F,\mathbb P)$ is a probability space and $\{{\mathcal F}_t| t\geq 0\}$ is a filtration of the $\sigma-$algebra $\mathcal F$. Given $\mathcal{C}\in\mathcal P(\Omega)$, we will denote $\sigma(\mathcal{C})$ the $\sigma-$algebra generated by $\mathcal{C}$. We denote $\mathcal F_{\infty}\eqdef \sigma \left(\bigcup_{t\geq 0} \mathcal F_t \right)\in\mathcal F$.

The expectation of a random variable $\xi:\Omega\rightarrow\H$ is denoted by 
\[
\EE(\xi)\eqdef \int_{\Omega}\xi(\omega)d\PP(\omega).
\]
An event $E\in\calF$ happens almost surely if $\PP(E)=1$, and it will be denoted as "$E$, $\PP$-a.s." or simply "$E$, a.s.". The characteristic function of an event $E\in\calF$ is denoted by 
\[
\ind_E (\omega) \eqdef
\begin{cases}
1 & \text{if } \omega\in E,\\
0 & \text{otherwise}.
\end{cases}
\] 
An $\H$-valued stochastic process starting at $t_0\geq 0$ is a function $X:\Omega\times[t_0,+\infty[\rightarrow\H$. It is said to be continuous if $X(\omega,\cdot)\in C([t_0,+\infty[;\H)$ for almost all $\omega\in\Omega$. We will denote $X(t)\eqdef X(\cdot,t)$. We are going to study SDEs, and in order to ensure the uniqueness of a solution, we introduce an equivalence relationship over stochastic processes. Two stochastic processes $X,Y:\Omega\times [t_0,T]\rightarrow\H$ are said to be equivalent if $X(t)=Y(t)$, $\forall t\in [t_0,T]$, $\PP$-a.s. This leads us to define the equivalence relation $\calR$, which associates the equivalent stochastic processes in the same class. 

\smallskip

Furthermore, we will need some properties about the measurability of these processes. A stochastic process $X:\Omega\times [t_0,+\infty[\rightarrow\H$ is progressively measurable if for every $t\geq t_0$, the map $\Omega\times[t_0,t]\rightarrow\H$ defined by $(\omega,s)\rightarrow X(\omega,s)$ is $\calF_t\otimes\calB([t_0,t])$-measurable, where $\otimes$ is the product $\sigma$-algebra and $\calB$ is the Borel $\sigma$-algebra. On the other hand, $X$ is $\calF_t$-adapted if $X(t)$ is $\calF_t$-measurable for every $t\geq t_0$. It is a direct consequence of the definition that if $X$ is progressively measurable, then $X$ is $\calF_t$-adapted.

\smallskip

Let us define the quotient space:
\[
S_{\H}^0[t_0,T] \eqdef \left\lbrace X: \Omega\times[t_0,T]\rightarrow\H, \; X \text{ is a prog. measurable cont. stochastic process}\right\rbrace\Big/\calR.
\]
Set $S_{\H}^0[t_0]\eqdef \bigcap_{T\geq t_0} S_{\H}^0[t_0,T]$.
For $\nu>0$, we define $S_{\H}^{\nu}[t_0,T]$ as the subset of processes $X(t)$ in $S_{\H}^0[t_0,T]$ such that 
\[
S_{\H}^{\nu}[t_0,T] \eqdef  \left\lbrace X\in S_{\H}^0[t_0,T]:  \;
 \EE\pa{\sup_{t\in[t_0,T]}\norm{X_t}^{\nu}}<+\infty \right\rbrace.
\] 
We define $S_{\H}^{\nu}[t_0] \eqdef \bigcap_{T\geq t_0} S_{\H}^{\nu}[t_0,T]$. 
\smallskip


Following the notation of \cite[Section 2.1.2]{infinite}, we say that $W_t$ is a $\K$-valued cylindrical Brownian motion defined on on the filtered space $(\Omega,\calF,\calF_t,\Pro)$ if: \begin{renumerate}
    \item For an arbitrary $t\geq 0$, the mapping $W_t:\K\rightarrow \Lp^2(\Omega;\R)$ is linear;
    \item For an arbitrary $k\in\K$, $W_t(k)$ is an $\mathcal{F}_t$ Brownian motion;
    \item For arbitrary $k,k'\in\K$ and $t\geq 0$, $\mathbb{E}[W_t(k)W_t(k')]=t\langle k,k'\rangle_{\K}$.
\end{renumerate}  
\begin{remark}
    There is no $\K$-valued process $\tilde{W}_t$ such that:
    $$W_t(k)(\omega)=\langle \tilde{W}_t(\omega),k\rangle_{\K}.$$
    However, if $Q$ is a non-negative definite symmetric trace-class operator on $\K$, then a $\K-$valued $Q-$Brownian motion can be defined (see e.g. \cite[Definition 2.6]{infinite}, \cite[Section 4.1]{daprato}). Moreover, if $\K = \R^m$, then $W_t(k) = \langle \tilde{W}_t, k \rangle_{\K}$, where $\tilde{W}_t$ denotes the standard $m$-dimensional Brownian motion. Thus, the $\R^m$-cylindrical Brownian motion coincides with the standard $m$-dimensional Brownian motion.
\end{remark}

Besides, let $G:\Omega\times\R_+\rightarrow\calL_2(\K;\H)$ be a measurable and $\calF_t$-adapted process, then we can define $\int_0^t G(s)dW(s)$ which is the stochastic integral of $G$, and we have that $G\rightarrow\int_0^{\cdot}G(s)dW(s)$ is an isometry between the measurable and $\calF_t-$adapted $\calL_2(\K;\H)-$valued processes and the space of $\H$-valued continuous square-integrable martingales (see \cite[Theorem 2.4]{infinite}).

\tcb{
\subsection{Proof of Theorem~\ref{cps}}\label{sec:proofcps}
\begin{proof}
Set $F_{\varepsilon} (x) \eqdef F(x) + \frac{\varepsilon}{2}\|x\|^2$. Then \eqref{CSGDT1} can be written equivalently in a a compact form as
\[
\dot{x}(t) + \partial F_{\varepsilon (t)} (x(t)) \ni 0.
\]
Set $h (t) \eqdef \frac{1}{2}\|x(t) -x^{\star} \|^2$ where $x^{\star} = \proj_{\calS_F} (0)$. Derivation of $f$ and  the constitutive equation \eqref{CSGDT1} give
\begin{equation}\label{dert-1}
\dot{h}(t) + \left\langle  -\dot{x}(t), x(t)- x^{\star} \right\rangle=0,
\end{equation}
where $-\dot{x}(t)\in \partial F_{\varepsilon (t)} (x(t))$. By strong convexity of $F_{\varepsilon (t)}$, we get
\[
F_{\varepsilon (t)}(x^{\star}) \geq F_{\varepsilon (t)}(x(t)) + \left\langle  y(t),  x^{\star} -x(t)\right\rangle +  \frac{\varepsilon (t)}{2} \|x(t) -x^{\star} \|^2,
\]
for every $y(t)\in \partial F_{\varepsilon (t)} (x(t))$. Using that $F_{\varepsilon (t)}(x(t))\geq F_{\varepsilon (t)}(x_{\varepsilon (t)})$, we get
\[
F(x^{\star}) + \frac{\varepsilon(t)}{2} \|x^{\star} \|^2     \geq F(x_{\varepsilon (t)}) + 
 \frac{\varepsilon(t)}{2} \|x_{\varepsilon (t)} \|^2 +\left\langle  y(t),  x^{\star} -x(t)\right\rangle +  \frac{\varepsilon (t)}{2} \|x(t) -x^{\star} \|^2,\]
 for every $y(t)\in \partial F_{\varepsilon (t)} (x(t))$. Besides, from $F(x^{\star}) \leq F(x_{\varepsilon (t)})$ we deduce
\begin{equation}\label{dert-2}
\left\langle  y(t),  x(t)- x^{\star} \right\rangle \geq   \varepsilon (t) h(t) + \frac{\varepsilon (t)}{2} \left(    \|x_{\varepsilon (t)} \|^2 -  \|x^{\star} \|^2 \right),
\end{equation}
for every $y(t)\in \partial F_{\varepsilon (t)} (x(t))$. Combining \eqref{dert-1} with \eqref{dert-2} we obtain
\begin{equation}\label{xt3}
\dot{h}(t) + \varepsilon (t) h (t) \leq \frac{1}{2}\varepsilon (t) \left( \|x^{\star} \|^2  - \| x_{\varepsilon (t)} \|^2 \right).
\end{equation}
Set $m(t) \eqdef \exp{\int_{t_0}^t \varepsilon (s)ds}$. Integrating \eqref{xt3} from $t_0$ to $t$, we get
\begin{equation}\label{eq: Tikh1}
h(t)  \leq \frac{h (t_0) }{m(t)} +  \frac{1}{2 m(t)} \int_{t_0}^t  m'(s) \left( \|x^{\star} \|^2  - \| x_{\varepsilon (s)} \|^2 \right)ds.
\end{equation}
According to  hypothesis \ref{tikzero} and the classical property of the Tikhonov approximation we have \linebreak
$ x_{\varepsilon (t)} \to x^{\star}$, and hence $ \|x^{\star} \|^2  - \| x_{\varepsilon (s)} \|^2  \to 0$.
To pass to the limit on \eqref{eq: Tikh1} we use hypothesis \ref{tikinf} which tells us that $m(t) \to +\infty$. 
Let us complete the argument by using that convergence implies ergodic convergence. Precisely, given $\delta >0$, let $t_{\delta} >t_0$ such that $| \|x^{\star} \|^2  - \| x_{\varepsilon (s)} \|^2 | \leq \delta$ 
for $s \geq t_{\delta}$.  Then split the integral as follows
\begin{eqnarray}
h(t)  &\leq & \frac{h (t_0) }{m(t)} +  \frac{1}{2 m(t)} \int_{t_0}^{t_{\delta}}  m'(s) \left( \|x^{\star} \|^2  - \| x_{\varepsilon (s)} \|^2 \right)ds +  \delta \frac{1}{2 m(t)} \int_{t_{\delta}}^{t}  m'(s) ds \\
&\leq &  \frac{h (t_0) }{m(t)} +  \frac{1}{2 m(t)} \int_{t_0}^{t_{\delta}}  m'(s) \left( \|x^{\star} \|^2  - \| x_{\varepsilon (s)} \|^2 \right)ds + \frac{\delta}{2}.
\end{eqnarray}
Then let $t$ tend to infinity, to get $\limsup_{t \to +\infty} h(t) \leq \frac{\delta}{2}$.
This being true for any $\delta >0$ gives the result.
\end{proof}
}

\subsection{\tcb{Existence and uniqueness of the SDI}}\label{exiuniappendix}
\tcb{
We now prove Theorem~\ref{exiuni}, which specifies conditions ensuring the existence and uniqueness of a solution to \eqref{SDI}. The argument builds on prior results while addressing aspects not covered in \cite{petterson}.
\begin{proof}
The existence of a solution $(X,\eta)$ in the sense of Definition~\ref{def:SDIsolution} comes from \cite[Theorem~3.5]{petterson} (see \cite[Section 7.1.1]{daprato} for the SDE case). We now turn to uniqueness. Let $(X_1,\eta_1)$ and $(X_2,\eta_2)$ be two solutions of \eqref{SDI}. By It\^o's formula, we have 
\begin{align*}
\Vert X_1(t)-X_2(t)\Vert^2&=2\int_{t_0}^t \langle b(s,X_1(s))-b(s,X_2(s)), X_1(s)-X_2(s)\rangle ds\\
&-2\int_{t_0}^t \langle \eta_1'(s)-\eta_2'(s), X_1(s)-X_2(s) \rangle ds
    +\int_{t_0}^t \Vert \sigma(s,X_1(s))-\sigma(s,X_2(s))\Vert_{\mathrm{HS}}^2 ds\\
    &+\int_{t_0}^t \langle X_1(s)-X_2(s),[\sigma(s,X_1(s))-\sigma(s,X_2(s))] dW(s)\rangle.
\end{align*}
Since for almost all $t \geq 0$, $\eta_i'(t)\in A(X_i(t))$, $i=\{1,2\}$, by monotonicity of $A$, we have that for almost all $t\geq t_0$, 
\[
\langle \eta_1'(t)-\eta_2'(t), X_1(t)-X_2(t) \rangle \geq 0,
\] 
and thus the second term on the right-hand side is non-positive.
Now, let $n\in \N$ arbitrary and consider the stopping time $\tau_n=\inf\{t\geq t_0: \Vert X_1(t)-X_2(t)\Vert\geq n\}$ and evaluate the previous equation at $t\wedge \tau_n$, denoting $X_i^n(t)=X_i(t\wedge \tau_n)$ ($i=\{1,2\}$), we have 
\begin{align*}
\Vert X_1^n(t)-X_2^n(t)\Vert^2
&\leq  2\int_{t_0}^{t\wedge\tau_n} \langle b(s,X_1(s))-b(s,X_2(s)), X_1(s)-X_2(s)\rangle ds \\
&+\int_{t_0}^{t\wedge\tau_n} \Vert \sigma(s,X_1(s))-\sigma(s,X_2(s))\Vert_{\mathrm{HS}}^2 ds \\
&+\int_{t_0}^{t\wedge\tau_n} \langle X_1(s)-X_2(s),[\sigma(s,X_1(s))-\sigma(s,X_2(s))] dW(s)\rangle\\
& \leq L(L+2)\int_{t_0}^{t\wedge\tau_n} \Vert X_1(s)-X_2(s)\Vert^2 ds\\
&+\int_{t_0}^{t\wedge\tau_n} \langle X_1(s)-X_2(s),[\sigma(s,X_1(s))-\sigma(s,X_2(s))] dW(s)\rangle\\
& \leq L(L+2)\int_{t_0}^{t} \Vert X_1^n(s)-X_2^n(s)\Vert^2 ds\\
&+\int_{t_0}^{t\wedge\tau_n} \langle X_1(s)-X_2(s),[\sigma(s,X_1(s))-\sigma(s,X_2(s))] dW(s)\rangle.
\end{align*}
Note that we have used Cauchy-Schwarz inequality and the Lipschitz assumption on $(b,\sigma)$ in the second inequality. Taking expectation of both sides and using the properties of It\^o's integral we obtain 
\begin{align*}
    \EE(\Vert X_1^n(t)-X_2^n(t)\Vert^2)&\leq L(L+2) \int_{t_0}^{t} \EE(\Vert X_1^n(s)-X_2^n(s)\Vert^2) ds.
\end{align*}
By Gr\"onwall's inequality, we obtain that 
\[
\EE(\Vert X_1^n(t)-X_2^n(t)\Vert^2)= 0, \forall t\geq t_0,\forall n\in\N.
\] 
On the other hand, we have that $\lim_{n\rightarrow+\infty} t\wedge\tau_n=t$. Therefore,  taking $\liminf_{n\rightarrow+\infty}$ in the previous expression, using Fatou's Lemma and the fact that $X_1,X_2$ are a.s. continuous processes, we conclude that $\EE(\Vert X_1(t)-X_2(t)\Vert^2)= 0$, consequently 
\[
\Pro(X_1(t)=X_2(t), \forall t\in [t_0,T])=1,\quad \text{ for every } T>t_0.
\]
Let $T>t_0$ arbitrary, let us prove that $\EE\left(\sup_{t\in[t_0,T]}\Vert X(t)\Vert^{\nu}\right)<+\infty$. Using It\^o's formula with the solution process $X$ and the anchor function $\phi(x)=\Vert x-x^{\star}\Vert^2$ for $x^{\star}\in A^{-1}(0)$, we obtain for every $t\in [t_0,T]$:
\begin{align*}
\Vert X(t)-x^{\star}\Vert^2 & = \Vert X_0-x^{\star}\Vert^2+2\int_{t_0}^t \langle b(s,X(s)),X(s)-x^{\star}\rangle ds-2\int_{t_0}^t \langle \eta'(s),X(s)-x^{\star}\rangle ds\\
&+\int_{t_0}^t \Vert \sigma(s,X(s))\Vert_{\mathrm{HS}}^2 ds+2\int_{t_0}^t \langle X(s)-x^{\star},\sigma(s,X(s))dW(s)\rangle.
\end{align*}
Since $\eta'(t)\in A(X(t))$ for almost all $t \geq 0$, and $0\in A(x^{\star})$, by monotonicity of $A$ we have that for every $t\in [t_0,T]$, 
\[
\langle \eta'(t),X(t)-x^{\star}\rangle\geq 0,\quad \text{for almost all } t \geq 0 .
\]  
Thus the second integral is nonnegative, which implies 
\begin{equation}\label{sdv}
\begin{aligned}
\Vert X(t)-x^{\star}\Vert^2 &\leq \Vert X_0-x^{\star}\Vert^2+2\int_{t_0}^t \langle b(s,X(s)),X(s)-x^{\star}\rangle ds+\int_{t_0}^t \Vert \sigma(s,X(s))\Vert_{\mathrm{HS}}^2 ds\\
&+2\int_{t_0}^t \langle X(s)-x^{\star},\sigma(s,X(s))dW(s)\rangle.
\end{aligned}
\end{equation}
Moreover, we have
\[
2\langle b(t,x),x-x^{\star}\rangle+\Vert\sigma(t,x)\Vert_{\mathrm{HS}}^2\leq 2\Vert b(t,x)\Vert \Vert x-x^{\star}\Vert+\Vert\sigma(t,x)\Vert_{\mathrm{HS}}^2\leq C(1+\Vert x-x^{\star}\Vert^2),\quad\forall t\geq t_0,\forall x\in\H.
\] 
We now proceed as in the proof of \cite[Lemma 3.2]{gig} to conclude that $X\in S_{\H}^{\nu}[t_0]$. In fact, we take power $\frac{\nu}{2}$ at both sides of \eqref{sdv}, then using that $(a+b+c)^{\frac{\nu}{2}}\leq 3^{{\frac{\nu-2}{2}}}(a^{\frac{\nu}{2}}+b^{\frac{\nu}{2}}+c^{\frac{\nu}{2}})$ we have 
\begin{align*}
    \Vert X(t)-x^{\star}\Vert^{\nu} &\leq 3^{{\frac{\nu-2}{2}}}\left(\Vert X_0-x^{\star}\Vert^{\nu}+C^{\frac{\nu}{2}}\left(\int_{t_0}^t 1+\Vert X(s)-x^{\star}\Vert^2 ds\right)^{\frac{\nu}{2}}\right)\\
    &+3^{{\frac{\nu-2}{2}}}2^{\frac{\nu}{2}}\left(\int_{t_0}^t \langle X(s)-x^{\star},\sigma(s,X(s))dW(s)\rangle\right)^{\frac{\nu}{2}}.
\end{align*}
Now taking supremum $t\in [t_0,T]$ and then expectation at both sides, we have that there exists $K=K(\nu,T)$ such that:
\begin{align*}
    \EE\left(\sup_{t\in [t_0,T]}\Vert X(t)-x^{\star}\Vert^{\nu}\right) &\leq K\left(1+\EE\left(\Vert X_0-x^{\star}\Vert^{\nu}\right)+\int_{t_0}^T \EE(\Vert X(s)-x^{\star}\Vert^{\nu}) ds\right)\\
    &+K\EE\left(\sup_{t\in [t_0,T]}\Big|\int_{t_0}^t \langle X(s)-x^{\star},\sigma(s,X(s))dW(s)\rangle\Big|^{\frac{\nu}{2}}\right).
\end{align*}
By Proposition \ref{burkholder1}, we get that, for a redefined $K=K(\nu,T)$,
\begin{equation}\label{sdn1}
    \begin{aligned}
    \EE\left(\sup_{t\in [t_0,T]}\Vert X(t)-x^{\star}\Vert^{\nu}\right) &\leq K\left(1+\EE\left(\Vert X_0-x^{\star}\Vert^{\nu}\right)+\int_{t_0}^T \EE(\Vert X(s)-x^{\star}\Vert^{\nu}) ds\right)\\
    &+K\EE\left(\Big|\int_{t_0}^T \Vert X(s)-x^{\star}\Vert^2\Vert\sigma(s,X(s))\Vert_{\mathrm{HS}}^2 ds\Big|^{\frac{\nu}{4}}\right).    
    \end{aligned}
\end{equation}
Note that by Cauchy-Schwarz and Young's inequality,
\begin{align*}
    &\EE\left(\Big|\int_{t_0}^T \Vert X(s)-x^{\star}\Vert^2\Vert\sigma(s,X(s))\Vert_{\mathrm{HS}}^2 ds\Big|^{\frac{\nu}{4}}\right)\\
    &\leq \EE\left(\sup_{t\in [t_0,T]}\Vert X(t)-x^{\star}\Vert^{\frac{\nu}{2}}\left(\int_{t_0}^T \Vert\sigma(s,X(s))\Vert_{\mathrm{HS}}^2\right)^{\frac{\nu}{4}}\right)\\
    &\leq \frac{1}{2K}\EE\left(\sup_{t\in [t_0,T]}\Vert X(t)-x^{\star}\Vert^{\nu}\right)+\frac{K}{2}\EE\left[\left(\int_{t_0}^T \Vert\sigma(s,X(s))\Vert_{\mathrm{HS}}^2\right)^{\frac{\nu}{2}}\right]\\
    &\leq \frac{1}{2K}\EE\left(\sup_{t\in [t_0,T]}\Vert X(t)-x^{\star}\Vert^{\nu}\right)+\frac{KC^{\frac{\nu}{2}}}{2}\EE\left[\left(\int_{t_0}^T 1+\Vert X(s)-x^{\star}\Vert^2ds\right)^{\frac{\nu}{2}}\right]\\
    &\leq \frac{1}{2K}\EE\left(\sup_{t\in [t_0,T]}\Vert X(t)-x^{\star}\Vert^{\nu}\right)+\frac{KC^{\frac{\nu}{2}}}{2}T^{\frac{\nu-2}{2}}\EE\left[\left(\int_{t_0}^T (1+\Vert X(s)-x^{\star}\Vert^2)^{\frac{\nu}{2}}ds\right)\right].
\end{align*}
Substituting this into \eqref{sdn1}, we have, for a possibly different $K=K(\nu,T)$,
\begin{equation*}
    \EE\left(\sup_{t\in [t_0,T]}\Vert X(t)-x^{\star}\Vert^{\nu}\right)\leq K\left(1+\EE\left(\Vert X_0-x^{\star}\Vert^{\nu}\right)+\int_{t_0}^T \EE\left(\sup_{t\in [0,s]}\Vert X(t)-x^{\star}\Vert^{\nu}\right) ds\right).
\end{equation*}
By Gr\"onwall's inequality, we obtain  
\begin{equation*}
    \EE\left(\sup_{t\in [t_0,T]}\Vert X(t)-x^{\star}\Vert^{\nu}\right)\leq K\left(1+\EE\left(\Vert X_0-x^{\star}\Vert^{\nu}\right)\right)e^{KT}<+\infty.
\end{equation*}
Since $T>t_0$ is arbitrary, we conclude that $X\in S_{\H}^{\nu}[t_0]$.
\end{proof}
}

\subsection{On martingales}

\begin{proposition}\label{burkholder1}  (see 
\cite{burkholder} and \cite[Section 1.2]{fitzpatrick}) (Burkholder-Davis-Gundy Inequality)
 Let $p>0$, $W$ be a $\K$-valued cylindrical Brownian motion defined over a filtered probability space $(\Omega,\mathcal{F},\{\mathcal{F}_t\}_{t\geq 0},\mathbb{P})$ and $g:\Omega\times \R_+\rightarrow\K$ a progressively measurable process (with our usual notation $g(t)\eqdef g(\cdot,t)$) such that \[\mathbb{E}\left[\left(\int_0^T \Vert g(s)\Vert^2ds\right)^{\frac{p}{2}}\right]<+\infty, \quad \forall T>0.\]
    Then, there exists $C_p>0$ (only depending on $p$) for every $T>0$ such that:
    \[\mathbb{E}\left[\sup_{t\in [0,T]}\Bigg|\int_0^t\langle g(s),dW(s)\rangle\Bigg|^p\right]\leq C_p \mathbb{E}\left[\left(\int_0^T \Vert g(s)\Vert^2ds\right)^{\frac{p}{2}}\right].\]
\end{proposition}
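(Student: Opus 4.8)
The plan is to reduce the Hilbert-space statement to the classical one-dimensional Burkholder--Davis--Gundy inequality for a real continuous local martingale, and then to establish the latter. First I would set $M_t \eqdef \int_0^t \dotp{g(s)}{dW(s)}$ and note that, by the isometry and quadratic-variation properties of the $\K$-valued stochastic integral recalled in Section~\ref{onstochastic}, $M$ is a real-valued continuous local martingale with $M_0=0$ and quadratic variation $[M]_t = \int_0^t \norm{g(s)}^2\,ds$. The claim is then exactly the one-sided bound $\EE[\sup_{t\le T}|M_t|^p] \le C_p\,\EE[[M]_T^{p/2}]$. To sidestep integrability issues I would localize with the stopping times $\tau_n \eqdef \inf\{t\ge 0 : |M_t|\vee [M]_t \ge n\}$: proving the bound for the bounded martingale $M^{\tau_n}_t = M_{t\wedge\tau_n}$ and then letting $n\to+\infty$ recovers the general case, since $\sup_{t\le T}|M^{\tau_n}_t|^p \uparrow \sup_{t\le T}|M_t|^p$ and $[M^{\tau_n}]_T \le [M]_T$, so monotone convergence together with the hypothesis $\EE[[M]_T^{p/2}]<+\infty$ closes the passage to the limit.

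The core of the argument is the range $p\ge 2$, where $x\mapsto |x|^p$ is $C^2$ on $\R$. Applying the classical It\^o formula to the bounded martingale gives
\[
|M_T|^p = p\int_0^T |M_s|^{p-1}\mathrm{sgn}(M_s)\,dM_s + \frac{p(p-1)}{2}\int_0^T |M_s|^{p-2}\,d[M]_s .
\]
In the localized setting the stochastic integral is a genuine martingale of zero expectation, so taking expectations and bounding $|M_s|^{p-2}\le \sup_{s\le T}|M_s|^{p-2}$ yields $\EE[|M_T|^p]\le \tfrac{p(p-1)}{2}\EE[\sup_{s\le T}|M_s|^{p-2}\,[M]_T]$. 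I would then apply H\"older's inequality with conjugate exponents $\tfrac{p}{p-2}$ and $\tfrac{p}{2}$, together with Doob's $L^p$ maximal inequality $\EE[\sup_{t\le T}|M_t|^p]\le (\tfrac{p}{p-1})^p\EE[|M_T|^p]$, to obtain a relation of the form $A \le c_p\,A^{(p-2)/p}B^{2/p}$ with $A\eqdef \EE[\sup_{t\le T}|M_t|^p]$ and $B\eqdef \EE[[M]_T^{p/2}]$. Since $A$ is finite in the localized problem, dividing by $A^{(p-2)/p}$ and raising to the power $p/2$ gives $A\le c_p^{p/2}B$, which is the claim.

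The hard part will be the range $0<p<2$, because $|x|^p$ is no longer twice differentiable at the origin and the It\^o computation above breaks down. Here I would instead derive the bound from a domination argument: the $p=2$ estimate $\EE[\sup_{t\le\tau}M_t^2]\le 4\,\EE[[M]_\tau]$ (Doob, for every bounded stopping time $\tau$) says that $\sup_{t}|M_t|^2$ is dominated by the increasing process $[M]$, and Lenglart's domination inequality upgrades this to $\EE[\sup_{t\le T}|M_t|^p]\le C_p\,\EE[[M]_T^{p/2}]$ for all $0<p<2$; an equivalent route is Burkholder's good-$\lambda$ inequality relating the laws of $\sup_{t\le T}|M_t|$ and $[M]_T^{1/2}$, integrated against $p\lambda^{p-1}\,d\lambda$. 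I note that in every application of this proposition in the paper one has $p=\nu/2\ge 1$, so the genuinely delicate subrange needing this domination argument is only $1\le p<2$, while $p\ge 2$ is covered by the direct computation above.
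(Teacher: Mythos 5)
Your argument is correct, but note that the paper does not actually prove this proposition: it is imported as a classical result, with the proof deferred to the cited references (Burkholder, and Section~1.2 of the Fitzpatrick reference). What you have written is essentially the standard textbook proof of the upper Burkholder--Davis--Gundy bound, and all the steps check out. The only content genuinely specific to this paper is the reduction you perform at the start: identifying $M_t=\int_0^t\dotp{g(s)}{dW(s)}$ as a real continuous local martingale with quadratic variation $[M]_t=\int_0^t\norm{g(s)}^2\,ds$, which follows from the coordinate expansion $M_t=\sum_i\int_0^t\dotp{g(s)}{e_i}\,dw_i(s)$ of the $\K$-valued integral defined in Section~\ref{onstochastic}. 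After that, the localization, the It\^o--H\"older--Doob chain for $p\ge 2$ (where $|x|^p$ is $C^2$ and the division by $A^{(p-2)/p}$ is licit because $A<+\infty$ for the stopped martingale; the degenerate H\"older exponent at $p=2$ just reduces to Doob plus the It\^o isometry), and the Lenglart domination step for $0<p<2$ are all standard and correctly assembled, as is the monotone-convergence passage back from the stopped martingales. So there is no gap; you have simply supplied a full proof where the paper gives a citation.
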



\begin{theorem}\label{convmartingale}
Let $\H$ be a real separable Hilbert space and $(M_t)_{t\geq 0}:\Omega\rightarrow\H$ be a continuous martingale such that $\sup_{t\geq 0} \EE\pa{\Vert M_t\Vert^2}<+\infty$. Then there exists a $\H-$valued random variable 
 $M_{\infty}\in\Lp^2(\Omega;\H)$ such that $\slim_{t\rightarrow \infty} M_t=M_{\infty}$ a.s..   
\end{theorem}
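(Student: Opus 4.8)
The plan is to prove convergence first in $\Lp^2(\Omega;\H)$ and then upgrade it to almost sure convergence by means of a maximal inequality together with the Borel--Cantelli lemma. To get the $\Lp^2$ limit I would exploit the orthogonality of martingale increments. Since $x\mapsto\norm{x}^2$ is convex, $\norm{M_t}^2$ is a real-valued submartingale, so $t\mapsto\EE[\norm{M_t}^2]$ is nondecreasing; being bounded by hypothesis, it converges to some $L<+\infty$. For $s\le t$ the martingale property gives $\EE[\dotp{M_t-M_s}{M_s}]=\EE[\dotp{\EE[M_t-M_s\mid\calF_s]}{M_s}]=0$, whence the Pythagorean identity $\EE[\norm{M_t-M_s}^2]=\EE[\norm{M_t}^2]-\EE[\norm{M_s}^2]$. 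As $s,t\to+\infty$ the right-hand side tends to $L-L=0$, so $(M_t)$ is Cauchy in $\Lp^2(\Omega;\H)$. Since this Bochner space is complete, there is $M_\infty\in\Lp^2(\Omega;\H)$ with $\lim_{t\to+\infty}\EE[\norm{M_t-M_\infty}^2]=0$; in particular $\EE[\norm{M_\infty}^2]=L$. Passing to the limit $u\to+\infty$ in $\EE[M_u\mid\calF_s]=M_s$ (conditional expectation being an $\Lp^2$-contraction) shows $M_s=\EE[M_\infty\mid\calF_s]$, so $M_\infty-M_s$ and $M_s$ stay orthogonal and $\EE[\norm{M_\infty-M_s}^2]=L-\EE[\norm{M_s}^2]$.

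Next I would pass to almost sure convergence. For every fixed $s$, the process $(M_t-M_s)_{t\ge s}$ is again a continuous martingale, so $\norm{M_t-M_s}$ is a nonnegative continuous submartingale on $[s,+\infty[$. Doob's maximal inequality on $[s,u]$ gives $\PP\big(\sup_{s\le t\le u}\norm{M_t-M_s}>\lambda\big)\le\lambda^{-2}\EE[\norm{M_u-M_s}^2]$; letting $u\to+\infty$, using that the supremum events increase and that $\EE[\norm{M_u-M_s}^2]\to\EE[\norm{M_\infty-M_s}^2]$, yields
\[
\PP\Big(\sup_{t\ge s}\norm{M_t-M_s}>\lambda\Big)\le \frac{\EE[\norm{M_\infty-M_s}^2]}{\lambda^2}.
\]
I would then choose an increasing sequence $t_n\to+\infty$ with $\EE[\norm{M_\infty-M_{t_n}}^2]\le 4^{-n}$ (possible since the left side tends to $0$) and apply the bound with $s=t_n$, $\lambda=2^{-n/2}$, obtaining $\PP(\sup_{t\ge t_n}\norm{M_t-M_{t_n}}>2^{-n/2})\le 2^{-n}$. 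By Borel--Cantelli, almost surely $\sup_{t\ge t_n}\norm{M_t-M_{t_n}}\le 2^{-n/2}$ for all large $n$, so $(M_t)$ is almost surely Cauchy as $t\to+\infty$ and hence converges a.s. to some limit $\widetilde M$. Finally, $\Lp^2$ convergence forces a further subsequence along which $M_{t_n}\to M_\infty$ a.s., and uniqueness of the full a.s. limit gives $\widetilde M=M_\infty$ a.s., which completes the argument.

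The main obstacle is precisely this second stage: upgrading an $\Lp^2$ limit to an almost sure limit over the \emph{continuous} time parameter. The subtlety is that $M_t-M_\infty$ is not a martingale, since $M_\infty$ is not $\calF_t$-measurable for finite $t$; this is circumvented by noting that the tail differences $M_t-M_{t_n}$ \emph{are} martingales, so Doob's inequality applies on each $[t_n,+\infty[$, and choosing $t_n$ so that the $\Lp^2$ tails are summable makes the Borel--Cantelli step go through. A technical point to handle carefully is the passage $u\to+\infty$ inside Doob's inequality, which relies on continuity from below of $\PP$ for the increasing family of supremum events combined with the $\Lp^2$ convergence of $\EE[\norm{M_u-M_{t_n}}^2]$ to $\EE[\norm{M_\infty-M_{t_n}}^2]$.
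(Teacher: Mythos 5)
Your proof is correct, and its second stage is the same engine as the paper's: apply Doob's $\Lp^2$ maximal inequality to the shifted tail martingale $(M_t-M_s)_{t\ge s}$, let the window go to infinity, and feed in the fact that $\EE[\norm{M_\infty-M_s}^2]\to 0$. Where you genuinely diverge is in how the candidate limit $M_\infty$ is produced and how the final almost sure statement is extracted. The paper first passes to the embedded discrete-parameter martingale $(M_k)_{k\in\N}$, invokes Scalora's almost sure convergence theorem for Hilbert-valued discrete martingales to get $M_k\to M_\infty$ a.s., and then upgrades to $\Lp^2$ convergence by dominated convergence (the domination coming from Doob's inequality); it then shows directly that $\Pro(\sup_{s\ge k}\norm{M_s-M_\infty}>\delta)\to 0$ as $k\to\infty$, with explicit constants from a triangle-inequality splitting. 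You instead build $M_\infty$ from scratch via the orthogonality of martingale increments, $\EE[\norm{M_t-M_s}^2]=\EE[\norm{M_t}^2]-\EE[\norm{M_s}^2]$, plus monotone boundedness of $t\mapsto\EE[\norm{M_t}^2]$ and completeness of $\Lp^2(\Omega;\H)$, and you close with Borel--Cantelli along a subsequence $(t_n)$ chosen so the $\Lp^2$ tails are summable. Your route is more self-contained -- it needs no external discrete-martingale convergence theorem and no dominated-convergence upgrade -- at the cost of the (standard but necessary) subsequence-identification step $\widetilde M=M_\infty$ at the end; the paper's route outsources the existence of the limit but gets a cleaner direct bound on the full tail supremum. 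One small point you should make explicit: the supremum $\sup_{t\ge s}\norm{M_t-M_s}$ ranges over an uncountable index set, so its measurability (and the continuity-from-below argument for the events) should be justified by restricting to rational times and using path continuity, exactly as the paper does; with that remark added, your argument is complete.
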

\begin{proof}
Consider $(M_k)_{k\in\N}$ to be the embedded discrete parameter martingale. Since \linebreak
$\sup_{k\in\N}\EE{\Vert M_k\Vert^2}<+\infty$, then $(M_k)_{k\in\N}$ is uniformly integrable and by \cite[Theorem~3]{scalora}, there exists a measurable $\H$-valued random variable $M_{\infty}\in\Lp^2(\Omega;\H)$ such that $\lim_{k\rightarrow\infty} \norm{M_k-M_{\infty}}=0$ a.s.. In turn, using the dominated convergence theorem (see \cite[Theorem 1.34]{rudin}), we also have \begin{equation}\label{convl2}
    \lim_{k\rightarrow\infty}\EE(\Vert M_k-M_{\infty}\Vert^2)=0.
\end{equation}

The rest of the proof is inspired by the arguments in the proof of \cite[Theorem~2.2]{ghafari}.\smallskip

We consider an arbitrary $k\in\N^*$ and $\delta>0$. Since $(M_{t+k}-M_k)_{t\geq 0}$ is also a $\H-$valued martingale, we can use Doob's maximal inequalities for $\H-$valued martingales shown in \cite[Theorem~2.2]{infinite}, which gives us 
\begin{equation}\label{marrr}
\delta^2\Pro\left(\sup_{s\in [0,t]}\Vert M_{s+k}-M_k\Vert>\delta\right)\leq \EE(\Vert M_{t+k}-M_k\Vert^2).
\end{equation}

Let $n\in \N^*$ be arbitrary. We have
\begin{align*}
\Pro\left(\sup_{s\in \Q\cap[0,n]}\Vert M_{s+k}-M_{\infty}\Vert>\delta\right)\leq \Pro\left(\sup_{s\in \Q\cap[0,n]}\Vert M_{s+k}-M_k\Vert>\frac{\delta}{2}\right)+ \Pro\left(\Vert M_{k}-M_{\infty}\Vert>\frac{\delta}{2}\right).
\end{align*}
Using \eqref{marrr} and Markov's inequality, we get the bound 
\begin{equation}
    \begin{aligned}\label{marrr2}
    \delta^2\Pro\left(\sup_{s\in \Q\cap[0,n]}\Vert M_{s+k}-M_{\infty}\Vert>\delta\right)&\leq 4\EE(\Vert M_{n+k}-M_k\Vert^2)+4\EE(\Vert M_k-M_{\infty}\Vert^2)\\
    &\leq 8\EE(\Vert M_{n+k}-M_{\infty}\Vert^2)+12\EE(\Vert M_k-M_{\infty}\Vert^2) .
\end{aligned}
\end{equation}

In turn, we get
\begin{align*}
\delta^2\Pro\left(\sup_{s\in\Q, s\geq k}\Vert M_s-M_{\infty}\Vert>\delta\right)
&\leq \delta^2 \Pro\left(\bigcup_{n\in \N^*}\Big\{\sup_{s\in\Q\cap [0,n]}\Vert M_{s+k}-M_{\infty}\Vert>\delta\Big\}\right)\\
&\leq \delta^2\liminf_{n\rightarrow\infty}\Pro\left(\sup_{s\in\Q\cap [0,n]}\Vert M_{s+k}-M_{\infty}\Vert>\delta\right)\\
&\leq 12 \EE\pa{\Vert M_k-M_{\infty}\Vert^2}, 
\end{align*}
where we have used \eqref{marrr2} and in the last inequality, that $\lim_{n\rightarrow\infty} \EE(\Vert M_{n+k}-M_{\infty}\Vert^2)=0$ by \eqref{convl2}. Taking $k\rightarrow\infty$, and using again \eqref{convl2}, we conclude that for all $\delta > 0$ 
\begin{align*}
\lim_{k\rightarrow\infty}\Pro\left(\sup_{s\in\Q, s\geq k}\Vert M_s-M_{\infty}\Vert>\delta\right)=0.
\end{align*}
For $k\in\N^*$, we define $A_k \eqdef \{\omega\in\Omega:\sup_{s\in\Q, s\geq k}\Vert M_s(\omega)-M_{\infty}(\omega)\Vert>\delta\}$, since $(A_k)_{k\in\N^*}$ is a non-increasing sequence of sets: 
\[
0=\lim_{k\rightarrow\infty}\Pro\left(A_k\right)=\Pro\left(\bigcap_{k\in\N^*}A_k\right).
\]
Defining for $l\geq 0$, $D_l=\{\omega\in\Omega:\Vert M_l(\omega)-M_{\infty}(\omega)\Vert>\delta\}$, it is direct to check that $\bigcup_{l\geq k, l\in \Q}D_l\subseteq A_k$ for every $k\in\N^*$. Therefore, we obtain that
\[
\Pro\left(\bigcap_{k\in\N^*}\bigcup_{l\geq k, l\in \Q}D_l\right)=0,
\] 

which is equivalent to $\slim_{s \rightarrow \infty, s \in \Q} M_t=M_{\infty}$ a.s.. The result follows from classical arguments of continuity of the martingale.
\end{proof}

\begin{theorem} \label{impp} \cite[Theorem 1.3.9]{mao}
 Let $\{A_t\}_{t\geq 0} $ and $\{U_t\}_{t\geq 0} $ be two continuous adapted increasing processes with $A_0=U_0=0$ a.s.. Let $\{M_t\}_{t\geq 0} $ be a real-valued continuous local martingale with $M_0=0$ a.s.. Let $\xi$ be a nonnegative $\mathcal{F}_0$-measurable random variable. Define  \[X_t=\xi+A_t-U_t+M_t\hspace{0.3cm} \text{ for } t\geq 0.\]
 If $X_t$ is nonnegative and $\lim_{t\rightarrow\infty} A_t<\infty$, then   $\lim_{t\rightarrow\infty} X_t$ exists and is finite, and $\lim_{t\rightarrow\infty} U_t<\infty$.
\end{theorem}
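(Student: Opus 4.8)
The plan is to reduce the statement to the classical almost-sure convergence theorem for nonnegative supermartingales (this is essentially a continuous-time Robbins--Siegmund lemma), after a localization that turns the \emph{random} bound $\lim_t A_t<\infty$ into genuine boundedness. First I would introduce the stopping times $\rho_k=\inf\{t\geq 0: A_t\geq k\}$. Since $A$ is continuous, increasing and $A_\infty\eqdef\lim_t A_t<\infty$ a.s., for every $k$ larger than $A_\infty(\omega)$ one has $\rho_k(\omega)=+\infty$; hence $\rho_k\uparrow+\infty$ a.s.\ and $\{A_\infty<\infty\}=\bigcup_k\{\rho_k=+\infty\}$ has full probability. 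It therefore suffices to prove, for each fixed $k$, that the stopped process $X_{\cdot\wedge\rho_k}$ converges a.s.\ and that $U_{\cdot\wedge\rho_k}$ has a finite limit; the conclusions for $(X,U)$ then follow on $\{\rho_k=+\infty\}$ by taking the union over $k$. On the stopped time scale $A_{t\wedge\rho_k}\leq k$. A parallel $\calF_0$-localization over the events $\{\xi\leq m\}\in\calF_0$, which exhaust $\Omega$ since $\xi<\infty$ a.s., reduces us to $\xi$ integrable so that the supermartingale built below is genuinely integrable.

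The key observation is a cancellation. Set $Z_t\eqdef X_{t\wedge\rho_k}+\bigl(k-A_{t\wedge\rho_k}\bigr)$. Since $A_{t\wedge\rho_k}\leq k$ and $X\geq 0$, the process $Z$ is nonnegative, and substituting $X_{t\wedge\rho_k}=\xi+A_{t\wedge\rho_k}-U_{t\wedge\rho_k}+M_{t\wedge\rho_k}$ gives $Z_t=(\xi+k)+M_{t\wedge\rho_k}-U_{t\wedge\rho_k}$. Here $M^{\rho_k}$ is a continuous local martingale and $U^{\rho_k}$ is continuous increasing, so $Z$ is a nonnegative local supermartingale; by Fatou's lemma along a localizing sequence it is a true supermartingale. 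Doob's supermartingale convergence theorem then yields that $Z_t$ converges a.s.\ to a finite limit $Z_\infty$. Because $A_{t\wedge\rho_k}$ also converges (bounded and increasing), I conclude first that $X_{t\wedge\rho_k}=Z_t-k+A_{t\wedge\rho_k}$ converges a.s.\ to a finite limit, and second that $U_{t\wedge\rho_k}-M_{t\wedge\rho_k}=(\xi+k)-Z_t$ converges a.s.\ to a finite limit.

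It remains to disentangle the increasing part $U$ from the martingale part $M$, and this is the step I expect to be the main obstacle, since convergence of the difference $M^{\rho_k}_t-U^{\rho_k}_t$ does not by itself force either summand to converge. I would argue via the quadratic variation: by the Dambis--Dubins--Schwarz time change, on $\{\langle M^{\rho_k}\rangle_\infty<\infty\}$ the martingale $M^{\rho_k}$ converges a.s., whereas on $\{\langle M^{\rho_k}\rangle_\infty=\infty\}$ one has $\limsup_t M^{\rho_k}_t=+\infty$ and $\liminf_t M^{\rho_k}_t=-\infty$. On this latter event I reach a contradiction: $U^{\rho_k}$ is increasing, hence admits a limit $U^{\rho_k}_\infty\in[0,+\infty]$, and writing $M^{\rho_k}_t=(M^{\rho_k}_t-U^{\rho_k}_t)+U^{\rho_k}_t$ shows that $M^{\rho_k}_t$ would converge to a finite limit when $U^{\rho_k}_\infty<\infty$ and tend to $+\infty$ when $U^{\rho_k}_\infty=+\infty$, both incompatible with the oscillation. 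Hence $\PP(\langle M^{\rho_k}\rangle_\infty=\infty)=0$, so $M^{\rho_k}$ converges a.s.\ and consequently $U^{\rho_k}_\infty=\lim_t U^{\rho_k}_t<\infty$ a.s. Undoing the localizations over $k$ and $m$ gives, on the full-probability set $\{A_\infty<\infty\}$, that $\lim_t X_t=\xi+A_\infty-U_\infty+M_\infty$ exists and is finite and that $\lim_t U_t=U_\infty<\infty$, which is the assertion.
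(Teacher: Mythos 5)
Your proof is correct, but note that the paper offers no argument of its own here: Theorem \ref{impp} is quoted verbatim from \cite[Theorem 1.3.9]{mao} (where the proof traces back to Liptser and Shiryayev's convergence theorem for nonnegative semimartingales), so the only comparison available is with that classical proof --- and your argument is essentially a correct, self-contained reconstruction of it. All three ingredients check out. The localization is legitimate: $\{\rho_k\le t\}=\{A_t\ge k\}$ by monotonicity of $A$, so $\rho_k$ is a stopping time, and $\rho_k\uparrow+\infty$ a.s.\ precisely because $\lim_{t}A_t<\infty$ a.s.; the cancellation $Z_t=X_{t\wedge\rho_k}+(k-A_{t\wedge\rho_k})=(\xi+k)+M_{t\wedge\rho_k}-U_{t\wedge\rho_k}$ is the key device. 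Your passage from nonnegative local supermartingale to true supermartingale is exactly where the $\{\xi\le m\}$ localization is needed (it makes $\mathbb{E}[Z_0]=\mathbb{E}[\xi]+k$ finite); the one detail you leave implicit --- integrability of the stopped $U$, needed even to state the supermartingale inequality along the localizing sequence $(\tau_n)$ of $M^{\rho_k}$ --- follows from $Z\ge 0$, which gives $U_{t\wedge\rho_k\wedge\tau_n}\le(\xi+k)+M_{t\wedge\rho_k\wedge\tau_n}$ with integrable right-hand side, so this is cosmetic rather than a substantive gap. Finally, the disentangling of $M$ from $U$ via the quadratic-variation dichotomy (a.s.\ convergence on $\{\langle M^{\rho_k}\rangle_\infty<\infty\}$; $\limsup_t M^{\rho_k}_t=+\infty$ and $\liminf_t M^{\rho_k}_t=-\infty$ on the complement) is the right tool, and your case distinction on $U^{\rho_k}_\infty\in[0,+\infty]$, exploiting that $U$ is monotone while $M^{\rho_k}-U^{\rho_k}$ converges, correctly shows the oscillation event is null; if you prefer to avoid invoking Dambis--Dubins--Schwarz for the convergence half of the dichotomy, stopping at $\sigma_c=\inf\{t:\langle M^{\rho_k}\rangle_t\ge c\}$ produces an $L^2$-bounded martingale and yields the same conclusion. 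Undoing the two localizations on $\{\rho_k=+\infty\}\cap\{\xi\le m\}$ is routine, so the proof stands as a valid substitute for the external citation.
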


\bibliographystyle{unsrt}
\smaller
\bibliography{samplebib}

\end{document}